\theoremstyle{plain}
\newtheorem{theorem}{Theorem}[section]
\newtheorem{lemma}[theorem]{Lemma}
\newtheorem{proposition}[theorem]{Proposition}
\newtheorem{thmdef}[theorem]{Theorem/Definition}
\newtheorem{corollary}[theorem]{Corollary}
\theoremstyle{definition}
\newtheorem{definition}[theorem]{Definition}
\newtheorem{example}[theorem]{Example}
\theoremstyle{remark}
\newtheorem{remark}[theorem]{Remark}
\newcommand{\RR}{\mathbb{R}}
\newcommand{\NN}{\mathbb{N}}
\newcommand{\PP}{\mathbb{P}}
\newcommand{\Sph}{\mathbb{S}}
\newcommand{\EE}{\mathbb{E}}
\title{Stochastic ordering in multivariate extremes}
\author{Michela Corradini\footnote{School of Mathematics, Cardiff University, Email: \texttt{CorradiniM@cardiff.ac.uk}} {}
and Kirstin Strokorb\footnote{School of Mathematics, Cardiff University, Email: \texttt{StrokorbK@cardiff.ac.uk}}}
\date{\today}
\begin{document}

\begin{minipage}{\linewidth}
$\phantom{abc}$
\vspace*{-1cm}
\maketitle
\end{minipage}

\begin{abstract}
The article considers the multivariate stochastic orders of upper orthants, lower orthants and positive quadrant dependence (PQD) among simple max-stable distributions and their exponent measures.
It is shown for each order that it holds for the max-stable distribution if and only if it holds for the corresponding exponent measure. The finding is non-trivial for upper orthants (and hence PQD order). From dimension $d\geq 3$ these three orders are not equivalent and a variety of phenomena can occur. However, every simple max-stable distribution PQD-dominates the corresponding independent model and is PQD-dominated by the fully dependent model. Among parametric models
the asymmetric Dirichlet family and the H\"usler-Rei{\ss} family turn out to be PQD-ordered according to the natural order within their parameter spaces.  For the H\"usler-Rei{\ss} family this holds true even for the supermodular order.
\end{abstract}

{\small
  \noindent \textit{Keywords}: Choquet model; concordance; exponent measure; lower orthant; majorisation; max-stable distribution; max-zonoid; multivariate distribution; positive quadrant dependence; portfolio; stable tail dependence function; upper orthant
\smallskip\\
  \noindent \textbf{2020 MSC} {60G70; 60E15}\\  
}

\section{Introduction}

Research on stochastic orderings and inequalities cover several decades, culminating among a vast literature for instance in the two monographs of \citet{shsh_07} and \citet{mueller_02}, the latter with a view towards applications and stochastic models, which appear in queuing theory, survival analysis, statistical physics or portfolio optimisation.
\citet{li_13} summarises developments of stochastic orders in reliability and risk management.
While the scientific activities  in finance, insurance, welfare economics or management science have been a driving force for many advances in the area, applications of stochastic orders are numerous and not limited to these fields. 
Importantly, such orderings will often play a role for robust inference, when only partial knowledge about a highly complex stochastic model is available.

Within the Extremes literature, related notions of positive dependence are well-known. It is a long-standing result that multivariate extreme value distributions exhibit positive association \citep{mo_83}. More generally, max-infinitely divisible distributions have this property as shown in \citet{resnick_08}, while \citet{beirlantetalii_03} summarise further implications in terms of positive dependence notions.
Recently, an extremal version of the popular MTP2 property (\citet{karlin_80, uhler_17})
has been studied in the context of multivariate extreme value distributions, especially H\"usler-Rei{\ss} distributions, and linked to graphical modelling, sparsity and implicit regularisation in multivariate extreme value models \citep{rez_23}.
Without any hope of being exhaustive, further fundamental scientific activity of the last decade on comparing stochastic models with a focus on multivariate extremes includes for instance an ordering of multivariate risk models based on extreme portfolio losses \citep{mainik_12}, inequalities for mixtures on risk aggregation
\citep{chen_22},
a comparison of dependence in multivariate extremes via tail orders
\citep{lichapter_13} or stochastic ordering for conditional extreme value modelling \citep{paptawn_15}.

\citet{yust_14} use stochastic dominance results from \citet{strsch_15} in order to derive bounds on the maximum portfolio loss and extend their work in \citet{yustco_20} to a distributionally robust inference for extreme {V}alue-at-{R}isk.

In this article we go back to some fundamental questions concerning stochastic orderings among multivariate extreme value distributions. We focus on the order of positive quadrant dependence (PQD order, also termed concordance order), which is defined via orthant orders.
Answers are given to the following questions. 
\begin{itemize}
\itemsep0mm
    \item \textit{What is the relation between orders among max-stable distributions and corresponding orders among their exponent measures?} (Theorem~\ref{thm:OO-characterization} and Corollary~\ref{cor:PQD})
    \item \textit{Can we find characterisations in terms of other typical dependency descriptors (stable tail dependence function, generators, max-zonoids)?} (Theorem~\ref{thm:OO-characterization})
    \item \textit{What is the role of fully independent and fully dependent model in this framework?} (Corollary~\ref{cor:PQD-boundaries}) 
      \item \textit{What is the role of Choquet/Tawn-Molchanov models in this framework?} (Corollary~\ref{cor:PQD-associated-Choquet} and Lemma~\ref{lemma:Choquet-LO-UO})
\end{itemize}
For lower orthants, the answers are readily deduced from standard knowledge in Extremes. It is dealing with the upper orthants that makes this work interesting. The key element in the proof of our most fundamental characterisation result, Theorem~\ref{thm:OO-characterization}, is based on Proposition~\ref{prop:key-OO-fundamentals} below, which may be of independent interest.
 Stochastic orders are typically considered for probability distributions only. In order to make sense of the first question, we introduce corresponding orders for exponent measures, which turn out natural in this context, cf.~Definition~\ref{def:orders-Lambda}.

\def\dirFigureWidth{0.27\linewidth}

\begin{figure}[bht]
    \centering \small
 \begin{tabular}{ccc}
    \includegraphics[width=\dirFigureWidth]{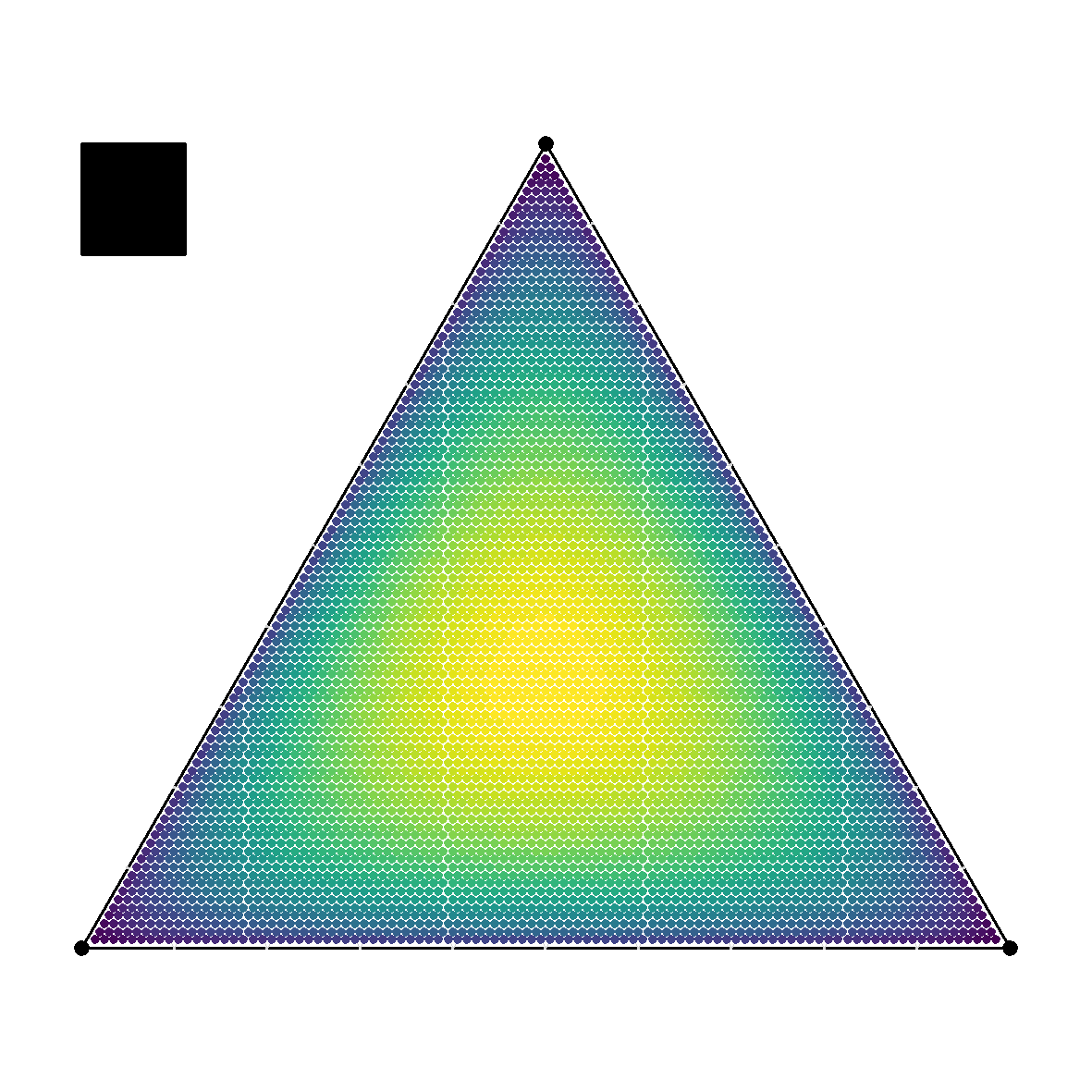}
    & \includegraphics[width=\dirFigureWidth]{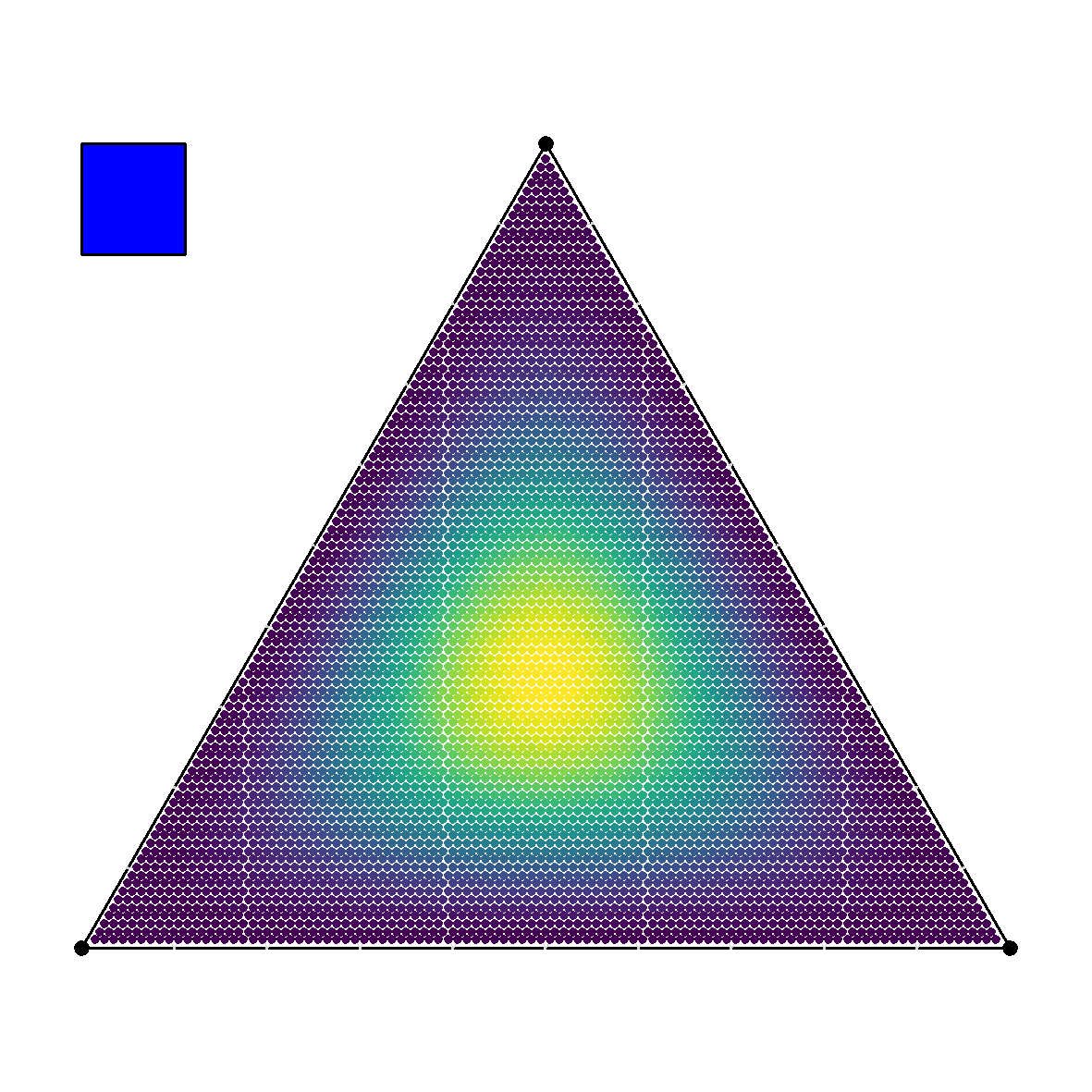}
    & \includegraphics[width=\dirFigureWidth]{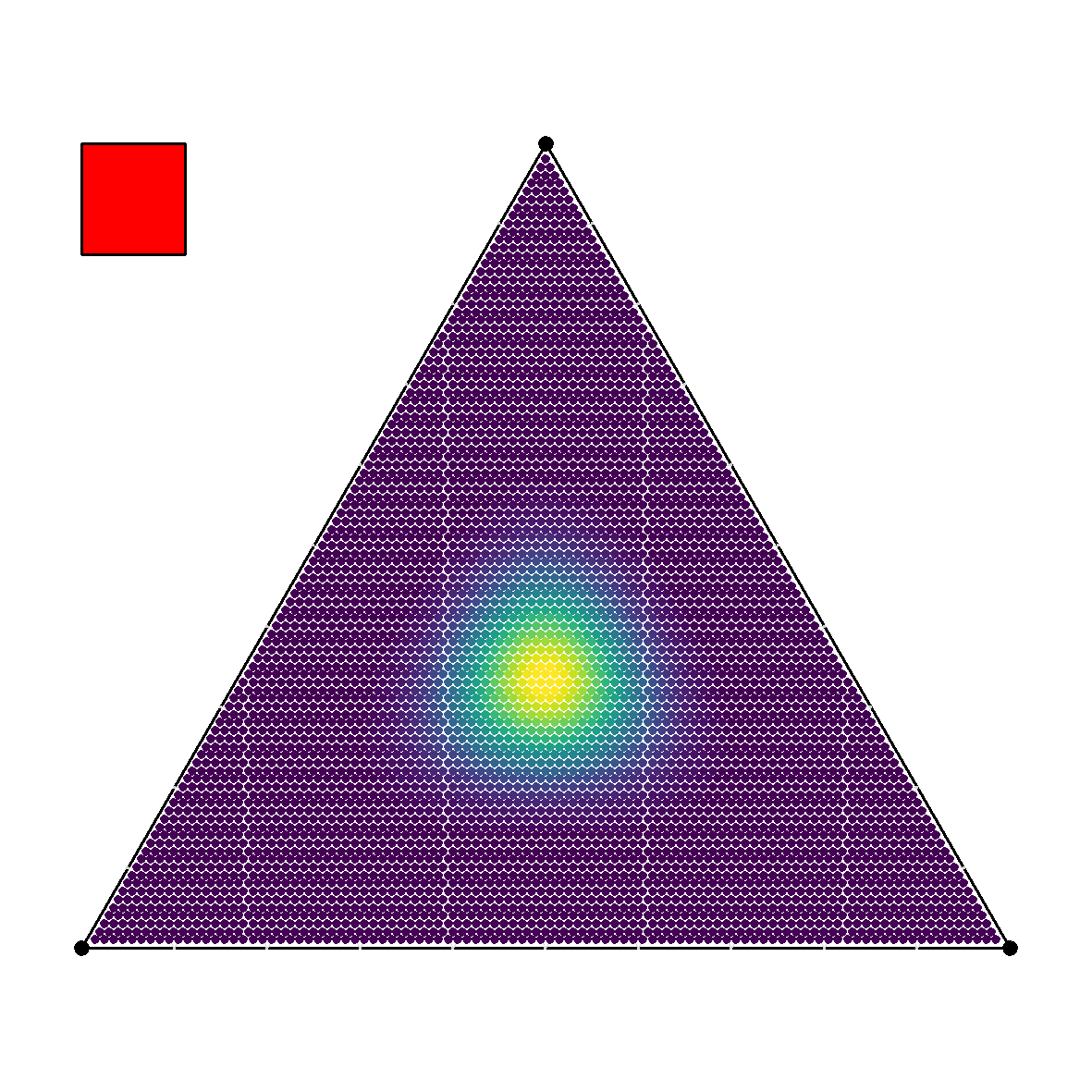}\\[-4mm]
 $\alpha=1.5$ & $\alpha=3$ & $\alpha=12$\\ 
    \includegraphics[width=\dirFigureWidth]{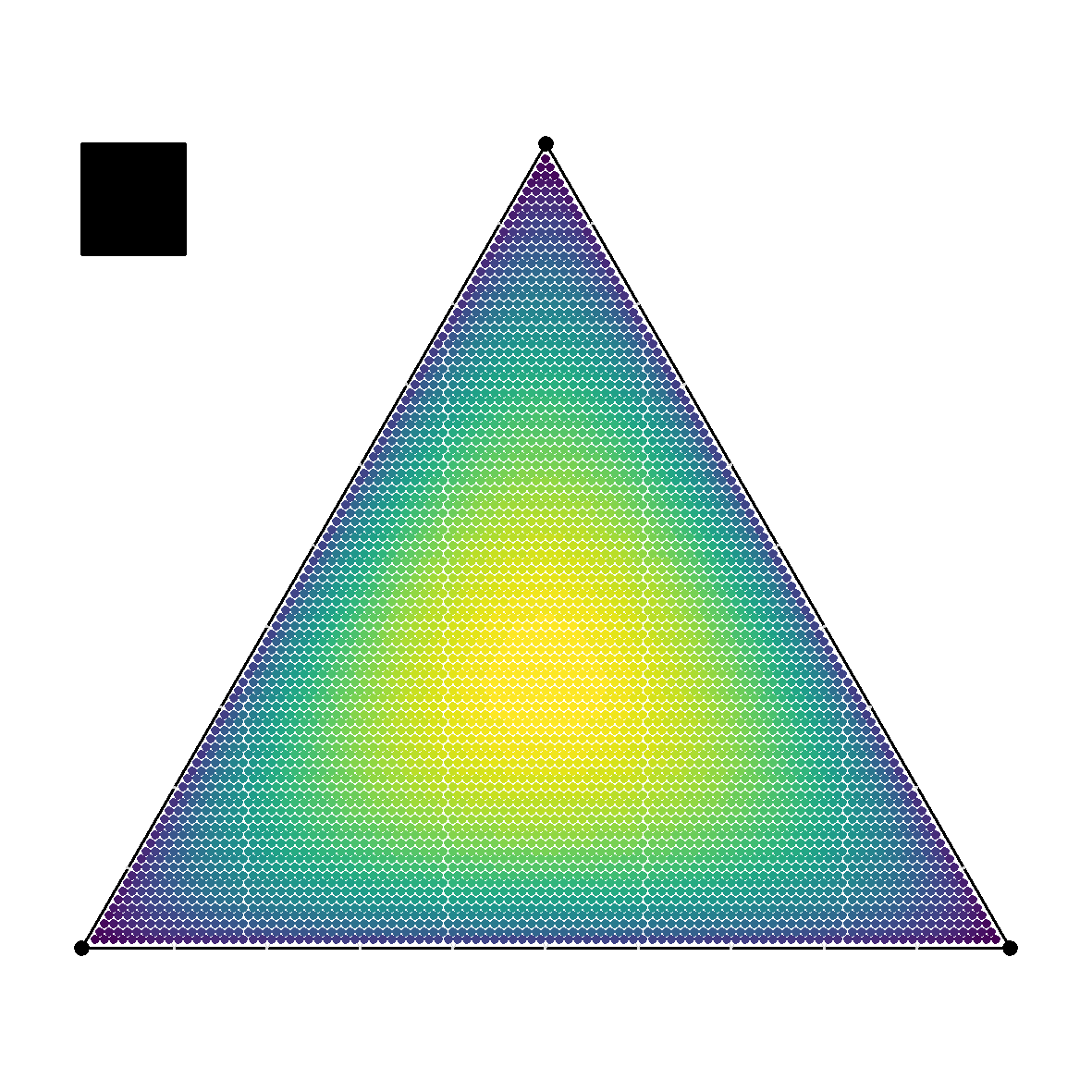}
    & \includegraphics[width=\dirFigureWidth]{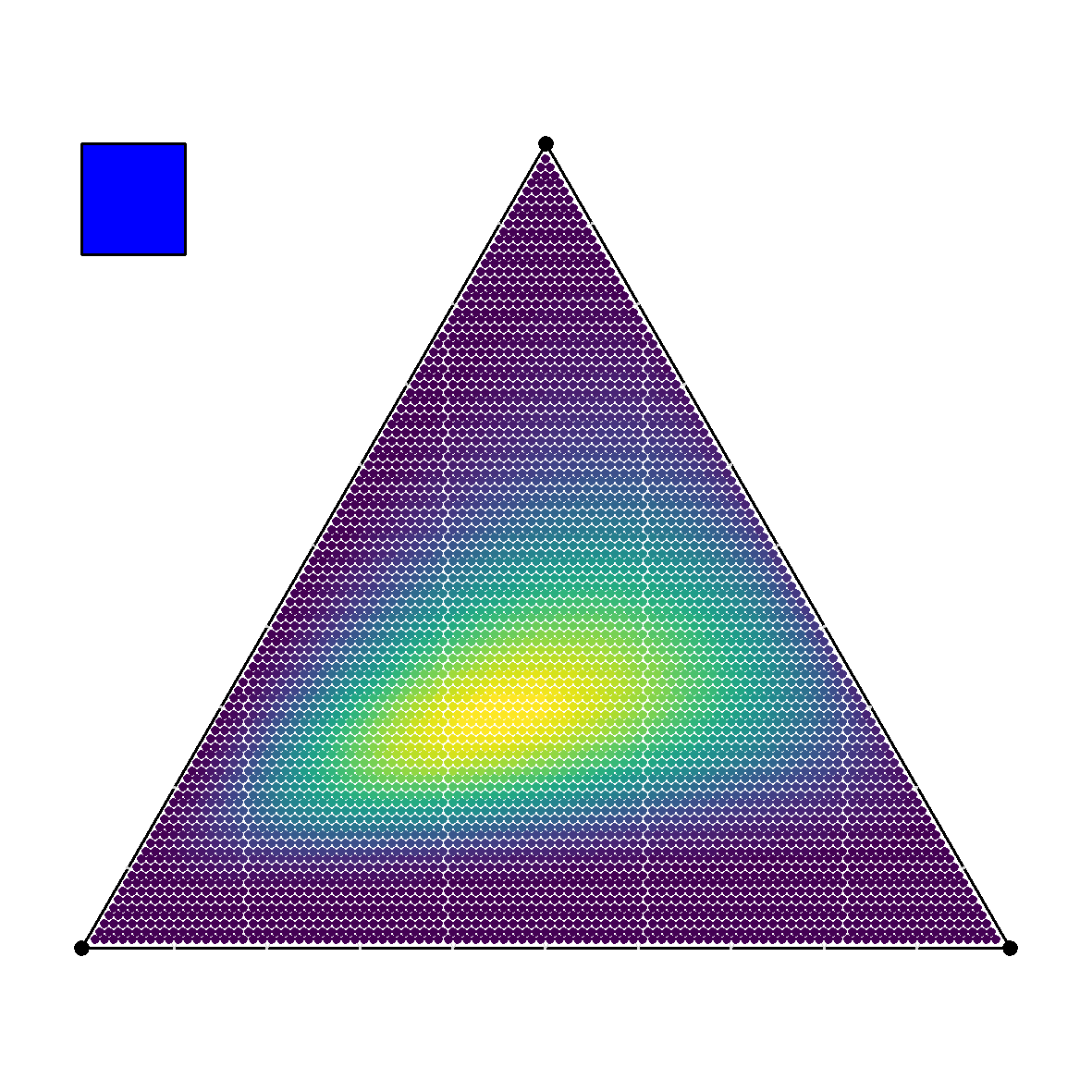}
    & \includegraphics[width=\dirFigureWidth]{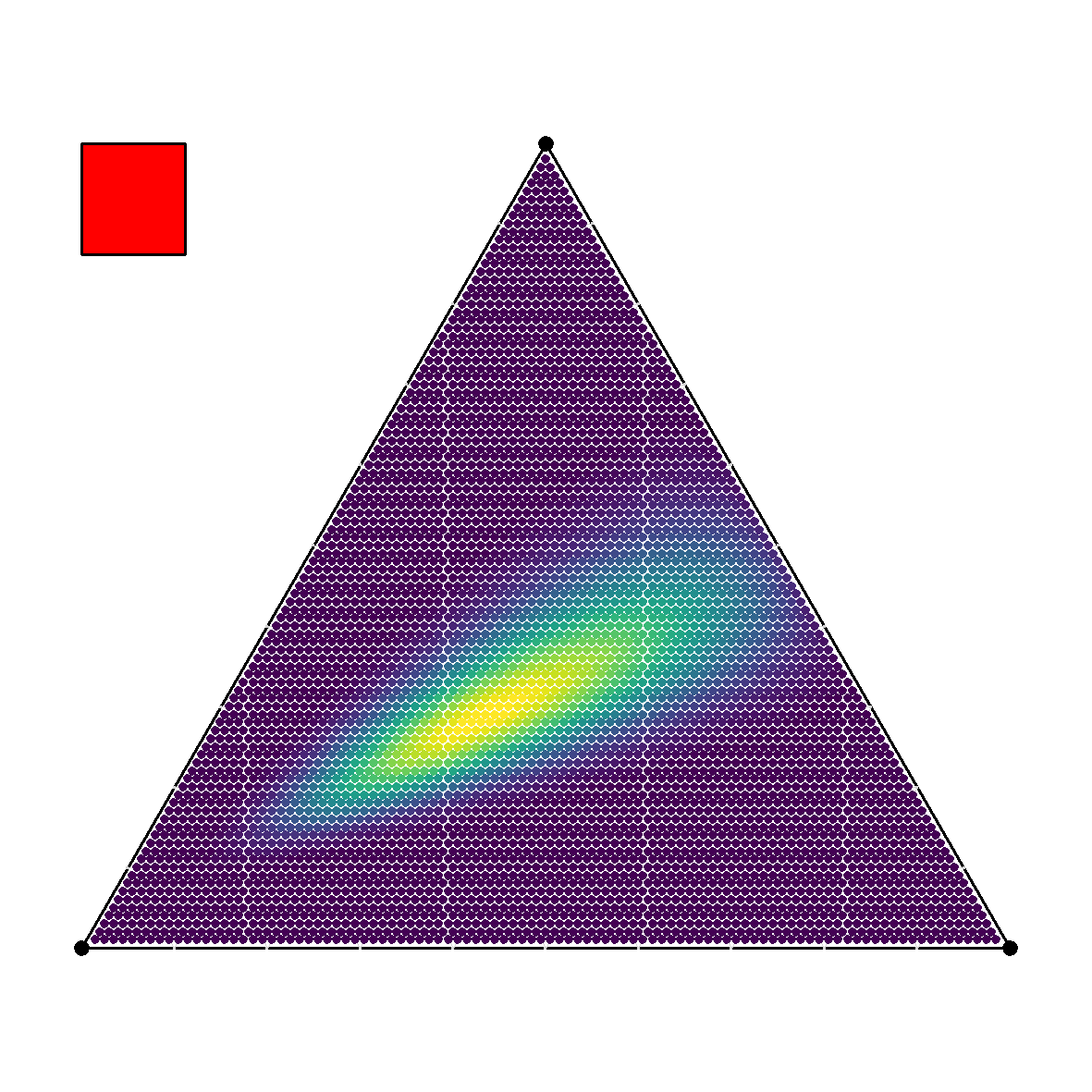}\\[-4mm]
    ${\bm \alpha}=(1.5,1.5,1.5)$ & ${\bm \alpha}=(1.5,3,12)$ & ${\bm \alpha}=(1.5,12,96)$\\
 \end{tabular}
    \caption{\small Angular densities (heat maps) of the symmetric max-stable Dirichlet model (top) and of the asymmetric max-stable Dirichlet model (bottom), cf.~\eqref{eq:DirichletAngularDensity} for an expression of the density. Larger values are represented by brighter colours.  The corresponding max-stable distributions are stochastically ordered in the PQD sense, increasing from left to right (Theorem~\ref{thm:Dirichlet-PQD}). The black, blue and red boxes encode the matching with Figures~\ref{fig:MinCDFs} and \ref{fig:MaxCDFs}.}
    \label{fig:DirichletAngularDensities}
\end{figure}

Second, we draw our attention to two popular parametric families of multivariate extreme value distributions that are closed under taking marginal distributions.
\begin{itemize}
\itemsep0mm
      \item \textit{Can we find order relations among the Dirichlet and H\"usler-Rei{\ss} parametric models?} (Theorem~\ref{thm:Dirichlet-PQD} and Theorem~\ref{thm:HRm})
\end{itemize}
The answers are affirmative.
For the H\"usler-Rei{\ss} model the result may be even strengthened for the supermodular order, which is otherwise beyond the scope of this article.
To give an impression of the result for the Dirichlet family, Figure~\ref{fig:DirichletAngularDensities} depicts six angular densities of the trivariate max-stable Dirichlet model. \citet{afz_15} showed already that the symmetric models associated with the top row densities are decreasing in the lower orthant sense. Our new result covers the asymmetric case depicted in the bottom row; we show that the associated multivariate extreme value distributions are decreasing in the (even stronger) PQD-sense (with a more streamlined proof).

Accordingly, our text is structured as follows. Section~\ref{sec:maxstability} recalls some basic representations of multivariate max-stable distributions and examples of parametric models that are relevant for subsequent results.  In Section~\ref{sec:order} we review the relevant multivariate stochastic orderings together with important closure properties. This section  contains also our (arguably natural) definition for corresponding order notions for exponent measures.  All main results are then given in Section~\ref{sec:results}.  Proofs and auxiliary results are postponed to Appendices~\ref{app:proofs} and~\ref{app:harmonic}.
Appendix~\ref{app:envelope} contains background material how we obtained the illustrations (max-zonoid envelopes for bivariate H\"usler-Rei{\ss} and Dirichlet families)  depicted in 
 Figures~\ref{fig:TMdepsets}, \ref{fig:Dirichletdepsets}, \ref{fig:AsymDirichletdepsets} and~\ref{fig:HRdepsets}.

\section{Prerequisites from multivariate extremes}
\label{sec:maxstability}

Our main results concern stochastic orderings among max-stable distributions, or, equivalently, orderings among their respective exponent measures, cf.~Theorem~\ref{thm:OO-characterization} below. Therefore, this section reviews some basic well-known results from the theory of multivariate extremes. Second, we will take a closer look at three marginally closed parametric families, the Dirichlet family, the H\"usler-Rei{\ss} family and the Choquet (Tawn-Molchanov) family of max-stable distributions, each model offering a different insight into phenomena of orderings among multivariate extremes.

\subsection{Max-stable random vectors and their exponent measures}

In order to clarify our terminology, we recall some definitions and basic facts about representations for  max-stable distributions, cf.~also \citet{resnick_08} or \citet{beirlantetalii_03}. Operations and inequalities between vectors are meant componentwise. We abbreviate ${\bm 0} = (0,0,\dots,0)^\top \in \RR^d$.
A random vector ${\bm X}=(X_1,\dots,X_d)^\top \in\RR^d$
is called \textit{max-stable} if for all $n \geq 1$ there exist suitable norming vectors ${\bm a_n}>{\bm 0}$ and ${\bm b_n}\in \RR^d$, such that the distributional equality
\[ \max_{i=1,\dots,n} ({\bm X_i}) \overset{d}{=}  {\bm a_n} {{\bm X} + {\bm b_n}} \] 
holds, where  ${\bm X_1},\dots,{\bm X_n}$ are  i.i.d.\ copies of ${\bm X}$.
According to the Fisher-Tippet theorem the marginal distributions $G_i(x)=\PP(X_i \leq x)$ are univariate max-stable distributions, that is, either degenerate to a point mass or a generalised extreme value (GEV) distribution of the form $G_{\xi}((x-\mu)/\sigma)$ with
$G_{\xi}(x)=\exp (-(1+\xi x)_+^{-{1}/{\xi}})$ if $\xi\ne0$ and $G_{0}(x)= \exp (-e^{-x})$,
where $\xi \in \RR$ is a shape-parameter, while $\mu\in \RR$ and $\sigma>0$ are the location and scale parameters, respectively. We write $\mathrm{GEV}(\mu,
\sigma,\xi)$ for short.

A max-stable random vector ${\bm X}=(X_1,\dots,X_d)^\top$ is called \textit{simple max-stable} if it has standard unit Fr\'echet marginals, that is, 
$\PP(X_i \leq x)=\exp (-1/x)$, $x>0$, for all $i=1,\dots,d$.
Any max-stable random vector ${\bm X}$ with GEV margins $X_i \sim \mathrm{GEV}(\mu_i,\sigma_i,\xi_i)$ can be transformed into a simple max-stable random vector ${\bm X^*}$ and vice versa via the componentwise order-preserving transformations 
\begin{align}\label{eq:marginaltransformation}
    {X_i^*}=\Big(1 + \xi_i \frac{X_i-\mu_i}{ \sigma_i}\Big)^{1/\xi_i}
    \quad \text{and} \quad X_i=\sigma_i \frac{( X^*_i)^{\xi_i}-1}{\xi_i}+{\mu_i}, \quad i=1,\dots,d
\end{align}
(with the usual interpretation of $(1+\xi x)^{1/\xi}$ as $e^x$ for $\xi=0$).
In this sense simple max-stable random vectors can be interpreted as a copula-representation for max-stable random vectors with non-degenerate margins, which encapsulates its dependence structure.

There are different ways to describe the distribution of such simple max-stable random vectors. The following will be relevant for us. 
Note that such vectors take values in the open upper orthant $(0,\infty)^d$ almost surely.
Here and hereinafter we shall denote the $i$-th indicator vector by ${\bm e_i}$ (all components of ${\bm e_i}$ are zero except for the $i$-th component, which takes the value one).

\begin{thmdef}[Representations of simple max-stable distributions] \label{td:representations}
 A random vector ${\bm X}=(X_1,\dots,X_d)^\top $ with distribution function 
$G({\bm x})=\exp(-V({\bm x}))$, ${\bm x}\in (0,\infty]^d$,
is simple max-stable if and only if the \emph{exponent function} $V$ can be represented in one of the following equivalent ways:
\begin{enumerate}[(i)]
\itemsep0mm
    \item \textbf{Spectral representation} \citep{dehaan_84}. There exists a finite measure space $(\Omega,\mathcal{A},\nu)$ and a measurable function $f: \Omega \to [0,\infty)^d$ such that $\int_{\Omega} f_i({\omega}) \; \nu(\mathrm{d}{\omega})=1$ for $i=1,\dots,d$, and \[ V(x_1,\dots,x_d)=\int_{\Omega} \max_{i=1,\dots,d}\frac{f_i(\omega)}{x_i} \; \nu(\mathrm{d}\omega). \]
      \item \textbf{Exponent measure} \citep{resnick_08}. There exists a $(-1)$-homogeneous measure $\Lambda$ on  $[0,\infty)^d \setminus \{{\bm 0}\}$, such that 
      \[\Lambda\Big( \big\{y \in [0,\infty)^d \,:\, y_i > 1\big\}\Big) = 1\]
for  $i=1,\dots,d$, and
\[V(x_1,x_2,\dots,x_d) = \Lambda\Big( \big\{y \in [0,\infty)^d \,:\, y_i > x_i \text{ for some } i \in \{1,\dots,d\} \big\}\Big).\]
      \item \textbf{Stable tail dependence function} \citep{ressel_13}. There exists a $1$-homogeneous and max-completely alternating function $\ell: [0,\infty)^d \to [0,\infty)$, such that $\ell({\bm e_i})=1$ for $i=1,\dots,d$, and \[ V(x_1,\dots,x_d) = \ell\bigg(\frac{1}{x_1},\dots,\frac{1}{x_d}\bigg)\]
      (cf.~Appendix~\ref{app:harmonic} for the notion of max-complete alternation).
\end{enumerate}
\end{thmdef}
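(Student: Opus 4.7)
The plan is to anchor everything at representation (ii) and establish the cycle: \emph{simple max-stable} $\Leftrightarrow$ (ii) $\Leftrightarrow$ (i) and (ii) $\Leftrightarrow$ (iii). All three directions are classical and attributed to \citet{dehaan_84}, \citet{resnick_08} and \citet{ressel_13}, so the task is to expose the skeleton of the equivalences in a self-contained way.

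First I would verify that simple max-stability forces $V$ to be $(-1)$-homogeneous. From $\max_{i=1}^n {\bm X_i} \overset{d}{=} n {\bm X}$ together with standard Fr\'echet margins, taking $-\log$ of $G({\bm x})^n = G({\bm x}/n)$ yields $n V({\bm x}) = V({\bm x}/n)$, and this homogeneity together with the marginal normalisation $V$ evaluated on the coordinate axes is the whole geometric content. Next I would construct the exponent measure $\Lambda$ on $[0,\infty)^d \setminus \{{\bm 0}\}$ by defining it on the generating $\pi$-system of ``escape'' sets $A_{\bm x} := \{y : y_i > x_i \text{ for some } i\}$ via $\Lambda(A_{\bm x}) := V({\bm x})$, checking additivity on finite disjoint unions through the modularity identity $V({\bm x}) + V({\bm y}) = V({\bm x} \wedge {\bm y}) + V({\bm x} \vee {\bm y})$ (which follows from $A_{\bm x} \cup A_{\bm y} = A_{{\bm x}\wedge {\bm y}}$ and $A_{\bm x} \cap A_{\bm y} = A_{{\bm x}\vee {\bm y}}$) and its higher-order inclusion-exclusion analogues, and then extending to a Borel measure by the Carath\'eodory/$\pi$-$\lambda$ machinery. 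The $(-1)$-homogeneity of $\Lambda$ transfers directly from that of $V$, while $\Lambda(\{y_i > 1\}) = 1$ is precisely the Fr\'echet margin condition. The converse is immediate from $V({\bm x}) = \Lambda(A_{\bm x})$.

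For (ii) $\Leftrightarrow$ (i), I would apply a polar decomposition: choose a norm (e.g.\ $\|\cdot\|_1$) and exploit the $(-1)$-homogeneity of $\Lambda$ to disintegrate it on $(0,\infty)\times \Omega$, with $\Omega = \{y \in [0,\infty)^d : \|y\|_1 = 1\}$, as $\Lambda = r^{-2}\,\mathrm{d}r \otimes H(\mathrm{d}\omega)$ for a finite angular measure $H$. Setting $\nu = H$ and $f(\omega) = \omega$ gives (i) by a direct change-of-variables computation of $\Lambda(A_{\bm x})$, and the normalisation $\int f_i\,\mathrm{d}\nu = 1$ matches $\Lambda(\{y_i > 1\}) = 1$. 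The reverse is the pushforward of $r^{-2}\,\mathrm{d}r \otimes \nu$ along $(r,\omega)\mapsto r f(\omega)$. For (ii) $\Leftrightarrow$ (iii), the bijection is simply $\ell({\bm u}) := V(1/u_1,\ldots,1/u_d)$; $1$-homogeneity of $\ell$ corresponds to $(-1)$-homogeneity of $V$, the normalisation $\ell({\bm e_i})=1$ to the Fr\'echet margin condition, and the characterisation of admissible $\ell$ as exactly the $1$-homogeneous max-completely alternating functions with $\ell({\bm e_i})=1$ is Ressel's theorem, which I would invoke from Appendix~\ref{app:harmonic} rather than reprove.

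The main obstacle is the well-definedness of $\Lambda$ as a genuine Borel measure reconstructed from the set-function $V$ on escape sets: the generating class is only a $\pi$-system, and one must confirm $\sigma$-additivity of the pre-measure obtained by inclusion-exclusion over intersections of box-complements. This is precisely where the max-complete alternation of $\ell$ (equivalently, of $V$) enters the picture structurally, since it is exactly the property ensuring that the alternating sums defining $\Lambda$ on boxes remain non-negative and thus extend to a positive measure.
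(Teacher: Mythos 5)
This Theorem/Definition is not proved in the paper at all: it is stated as a recollection of classical results with pointers to \citet{dehaan_84}, \citet{resnick_08} and \citet{ressel_13}, so your sketch can only be measured against the classical arguments. Your overall architecture is the standard one and most of it is sound: deriving $(-1)$-homogeneity of $V$ from $G({\bm x})^n=G({\bm x}/n)$, passing from (ii) to (i) by the polar disintegration $r^{-2}\,\mathrm{d}r\otimes H(\mathrm{d}\omega)$ and back by pushforward, and passing from (ii) to (iii) by the substitution $\ell({\bm u})=V(1/u_1,\dots,1/u_d)$ together with Ressel's characterisation of admissible $\ell$.

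However, the central step --- reconstructing $\Lambda$ from $V$ --- contains a genuine error. The identity $A_{\bm x}\cap A_{\bm y}=A_{{\bm x}\vee{\bm y}}$ is false: for $d=2$, ${\bm x}=(1,2)$, ${\bm y}=(2,1)$ and ${\bm z}=(1.5,1.5)$ one has ${\bm z}\in A_{\bm x}\cap A_{\bm y}$ (the escape occurs in different coordinates) but ${\bm z}\notin A_{(2,2)}$. Consequently your ``modularity identity'' $V({\bm x})+V({\bm y})=V({\bm x}\wedge{\bm y})+V({\bm x}\vee{\bm y})$ does not hold for exponent functions in general --- only the inequality $V({\bm x})+V({\bm y})\geq V({\bm x}\wedge{\bm y})+V({\bm x}\vee{\bm y})$ holds, since $A_{{\bm x}\vee{\bm y}}\subsetneq A_{\bm x}\cap A_{\bm y}$ --- and for the same reason the escape sets $\{A_{\bm x}\}$ are not closed under intersection, so they do not form a $\pi$-system and the Carath\'eodory/Dynkin machinery cannot be run on them as you describe. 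The correct construction, which your closing paragraph gestures at but does not carry out, assigns mass to half-open boxes $({\bm a},{\bm b}]$ bounded away from the origin via the higher-order alternating differences of $V$; non-negativity of these differences is exactly the max-complete alternation of $\ell$ (equivalently, max-infinite divisibility of $G$), and boxes do form a semiring on which the extension theorem applies, with uniqueness following because they generate the Borel $\sigma$-algebra on $[0,\infty]^d\setminus\{{\bm 0}\}$. As written, your additivity check fails already at the second-order step, so the construction of $\Lambda$ needs to be rerouted through boxes; the remaining equivalences (i) $\Leftrightarrow$ (ii) $\Leftrightarrow$ (iii) are fine.
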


In fact, the spectral representation can be seen as a polar decomposition of the {exponent measure} $\Lambda$, cf.~e.g.~\citet{resnick_08} or \citet{beirlantetalii_03}. Importantly, it is not uniquely determined by the law of ${\bm X}$.
Typical choices for the measure space $(\Omega,\mathcal{A},\nu)$ include (i) the unit interval with Lebesgue measure or (ii) a sphere $\Omega=\lbrace{\bm \omega}\in [0,\infty)^d \; : \; \lVert {\bm \omega} \rVert=1\rbrace$ with respect to some norm $\lVert \cdot \rVert$, for instance the $\ell_p$-norm
\[\lVert {\bm x} \rVert_p=\bigg(\sum_{i=1,\dots,d} \lvert x_i \rvert^p\bigg)^{1/p}\]
for some $p\geq 1$. 
For (i) it is then the spectral map $f$ which contains all dependence information. For (ii) one usually considers the component maps $f_i({\bm\omega})=\omega_i$, so that the measure $\nu$, then often termed \emph{\textbf{angular measure}}, absorbs the dependence information. 
For a given spectral representation $(\Omega,\mathcal{A},\nu,f)$ one may rescale $\nu$ to a probability measure and absorb the rescaling constant into the spectral map $f$. The resulting  random vector ${\bm Z}=(Z_1,\dots,Z_d)^\top$ such that $\EE(Z_i)={1}$, $i=1,\dots,d$, and \[ V(x_1,\dots,x_d)=\EE \max_{i=1,\dots,d}\bigg(\frac{Z_i}{x_i}\bigg),\]  has been termed \emph{\textbf{generator}} of ${\bm X}$, cf.~\citet{falk_19}.
A useful observation is the following; for a given vector ${\bm x}$ with values in $\RR^d$ and a subset $A \subset \{1,\dots,d\}$, let ${\bm x}_A$ be the subvector with components in $A$.
\begin{lemma}\label{lem:marZ}
    Let ${\bm Z}$ be a generator for the max-stable law ${\bm X}$, then  ${\bm Z}_A$ is a generator for ${\bm X}_A$.
\end{lemma}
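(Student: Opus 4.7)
The plan is to verify directly that $\bm Z_A$ satisfies the two defining properties of a generator for $\bm X_A$: first, that $\EE(Z_i)=1$ for every $i\in A$, and second, that the exponent function $V_A$ of $\bm X_A$ admits the representation $V_A(\bm x_A)=\EE\max_{i\in A}(Z_i/x_i)$. The first property is immediate, since it is already part of the hypothesis that $\bm Z$ is a generator for $\bm X$. So the entire content of the lemma lies in the second property.

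To access the exponent function of the marginal vector $\bm X_A$, I would use the standard fact that, for a simple max-stable law, the exponent function of a subvector is obtained from the full exponent function by sending the unwanted coordinates to $+\infty$. Concretely, for $\bm x_A \in (0,\infty]^{|A|}$,
\[
V_A(\bm x_A) \;=\; \lim_{x_i\to\infty,\, i\notin A} V(x_1,\dots,x_d),
\]
which follows because $-\log \PP(X_i\le x_i,\, i\in A) = -\log \PP(X_i\le x_i,\, i\in A;\, X_j\le\infty,\, j\notin A)$ and $V$ extends continuously to the boundary values $x_j=\infty$.

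Next I would substitute the generator representation and interchange limit and expectation. Writing
\[
V(x_1,\dots,x_d) \;=\; \EE\max_{i=1,\dots,d}\frac{Z_i}{x_i},
\]
as $x_j\to\infty$ for each $j\notin A$, the corresponding terms $Z_j/x_j$ converge monotonically to $0$, so the pointwise maximum decreases monotonically to $\max_{i\in A} Z_i/x_i$. Monotone convergence (or dominated convergence with dominating function $\sum_{i=1}^d Z_i/x_i$, which is integrable by $\EE Z_i=1$) then yields
\[
V_A(\bm x_A) \;=\; \EE\max_{i\in A}\frac{Z_i}{x_i},
\]
which is exactly the generator identity for $\bm X_A$ based on $\bm Z_A$. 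Combined with $\EE Z_i = 1$ for $i\in A$, this finishes the proof.

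There is no real obstacle here; the only subtle point is justifying the interchange of limit and expectation, and that is handled by monotone convergence. The lemma is essentially a bookkeeping statement confirming that generators behave naturally under marginalisation.
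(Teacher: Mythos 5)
Your proof is correct. The paper states Lemma~\ref{lem:marZ} as an unproved observation, and your argument --- identifying the marginal exponent function by letting $x_j\to\infty$ for $j\notin A$ and passing the limit through the expectation via monotone (or dominated) convergence, with integrability guaranteed by $\EE Z_i=1$ --- is exactly the standard verification the authors leave implicit.
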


   The stable tail dependence function $\ell$ goes back to \citet{huang_92} and has also been called \emph{D-norm} \citep{falk_04} of ${\bm X}$.
Since $\ell$ is 1-homogeneous, it suffices to know its values on the unit simplex $\triangle_d=\lbrace {\bm x}\in[0,\infty)^d \,:\, \lVert {\bm x} \rVert_1 =1 \rbrace$; the restriction of $\ell$ to $\triangle_d$ is called \emph{\textbf{Pickands dependence function}}
\[A(x_1,\dots,x_d) = \ell(x_1,\dots,x_d), \qquad (x_1,\dots,x_d)^\top  \in \triangle_d.\]
There exist further descriptors of the dependence structure, e.g.\ in terms of Point processes or LePage representation, cf. e.g.\ \citet{resnick_08} or, in a very general context, \citet{dmz2008}. Copulas of max-stable random vectors on standard uniform margins are called \emph{extreme value copulas} \citep{GudenSegers_10}. 

Let us close with a representation that allows for some interesting geometric interpretations. \citet{molchanov_08} introduced a convex body $K\subset [0,\infty)^d$, which can be interpreted (up to rescaling) as selection expectation of a random cross polytope associated with the (normalised) spectral measure $\nu$. It turns out that 
the stable tail dependence function is in fact the support function of $K$
\begin{align}\label{eq:ell-from-K}
  \ell({\bm x}) = \sup \lbrace \langle {\bm x},{\bm k}\rangle \; :\; {\bm k}\in K\rbrace.  
\end{align}
The convex body $K$ is called \emph{\textbf{max-zonoid}} (or \emph{\textbf{dependency set}}) of ${\bm X}$ and it is uniquely determined by the law of  ${\bm X}$. In fact
\begin{align}\label{eq:K-from-ell}
    K = \big\{{\bm k} \in [0,\infty)^d  \,:\, \langle {\bm k}, {\bm x} \rangle \leq \ell({\bm x}) \text{ for all } {\bm x} \in [0,\infty)^d \big\}. 
\end{align}

In general, it is difficult to translate one representation from Theorem~\ref{td:representations} into another apart from the obvious relations
\begin{align*}
    \ell({\bm x}) &=  \EE \max_{i=1,\dots,d} (x_iZ_i) = 
\int_{\Omega} \max_{i=1,\dots,d} x_i {f_i(\omega)}\; \nu(\mathrm{d}\omega)= \Lambda\Big( \big\{y \in [0,\infty)^d \,:\, \max_{i=1,\dots,d}(x_iy_i) > 1 \big\}\Big)
\end{align*}
for  ${\bm x} \geq {\bm 0}$. For convenience, we have added material in Appendix~\ref{app:envelope} how to obtain the boundary of a max-zonoid $K$ from the stable tail dependence function $\ell$ in the bivariate case, which will help to illustrate some of the results below.

\subsection{Parametric models}

Several parametric models for max-stable random vectors have been summarised for instance in \citet{beirlantetalii_03}. 
In what follows we draw our attention to two of the most popular parametric models, the Dirichlet and H\"usler-Rei{\ss} families, as well as the Choquet model (Tawn-Molchanov model), which will reveal some interesting phenomena and (counter-)examples of stochastic ordering relations.

\subsubsection{Dirichlet model} 

\citet{colestawn91} compute densities of angular measures of simple max-stable random vectors constructed from non-negative functions on the unit simplex $\triangle_d$.
In particular, the following \emph{asymmetric Dirichlet model} has been introduced. We summarise some equivalent characterisations, each of which may serve as a definition of the asymmetric Dirichlet model. 
This model has gained popularity due to its flexibility and simple structure forming the basis of Dirichlet mixture models \citep{boldi_07,sabourin_14}.

\begin{thmdef}[Multivariate max-stable Dirichlet distribution]\label{thmdef:Dirichlet}
A random vector ${\bm X}=(X_1,\dots,X_d)^\top$ is simple \emph{max-stable Dirichlet} distributed with parameter vector ${\bm\alpha}=(\alpha_1,\dots,\alpha_d)^\top\in(0,\infty)^d$, we write 
\[{\bm X}=(X_1,\dots,X_d)^\top ~\sim \mathrm{MaxDir}(\alpha_1,\dots,\alpha_d)=\mathrm{MaxDir}({\bm \alpha})\]
for short,  if and only if one of the following equivalent conditions is satisfied:
\begin{enumerate}[(i)]
\itemsep0mm
    \item \textbf{(Gamma generator)} A generator of ${\bm X}$ is the random vector 
    \[{\bm \alpha}^{-1}{\bm \Gamma}=(\Gamma_1/\alpha_1, \Gamma_2/\alpha_2,\dots,\Gamma_d/\alpha_d)^\top,\] 
    where
 ${\bm \Gamma}=(\Gamma_1,\dots,\Gamma_d)^\top$ consists of independent Gamma distributed variables $\Gamma_i\sim \Gamma(\alpha_i)$, $\alpha_i>0$, $i=1,\dots,d$. Here, the Gamma distribution $\Gamma(\alpha_i)$ has the density \[\gamma_{\alpha_i}(x)=\frac{ x^{\alpha_i-1}}{\Gamma(\alpha_i)}\exp\big(-{x}\big).\]
    \item \textbf{(Dirichlet generator)} A generator of ${\bm X}$ is the random vector 
    
    \[({\bm\alpha}^{-1}\lVert{\bm\alpha}\rVert_1){\bm D}
    =(\alpha_1+\dots+\alpha_d) \cdot (D_1/\alpha_1,D_2/\alpha_2,\dots,D_d/\alpha_d)^\top,\] 
    where
  ${\bm D}$ follows a Dirichlet distribution $\mathrm{Dir}(\alpha_1,\dots,\alpha_d)$ on the unit simplex $\triangle_d$ with density \[d({\omega_1,\dots,\omega_d})=\Gamma(\lVert{\bm\alpha}\rVert_1)\prod_{i=1}^d\frac{\omega_i^{\alpha_i-1}}{\Gamma(\alpha_i)}, \quad (\omega_1,\dots,\omega_d)^\top \in\triangle_d.\]
    \item \textbf{(Angular measure)} The density of the angular measure of ${\bm X}$ on $\triangle_d$ is given by 
    \begin{align}\label{eq:DirichletAngularDensity}
        h(\omega_1,\dots,\omega_d)=\frac{\Gamma(\lVert{\bm\alpha}\rVert_1+1)}{\lVert{\bm\alpha}{\bm\omega}\rVert_1}\prod_{i=1}^d\frac{\alpha_i^{\alpha_i}\omega_i^{\alpha_i-1}}{\Gamma(\alpha_i)(\lVert{\bm\alpha}{\bm\omega}\rVert_1)^{\alpha_i}}, \quad (\omega_1,\dots,\omega_d)^\top \in\triangle_d.
    \end{align}
\end{enumerate}
\end{thmdef}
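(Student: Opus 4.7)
The plan is to establish the chain of equivalences by showing that all three characterisations produce the same exponent function $V$ from Theorem/Definition~\ref{td:representations}, which in turn uniquely determines the max-stable law. For (i)$\Leftrightarrow$(ii), I would invoke the classical Gamma--Dirichlet decomposition: if $\Gamma_1,\dots,\Gamma_d$ are independent with $\Gamma_i\sim\Gamma(\alpha_i)$ and one sets $S=\Gamma_1+\dots+\Gamma_d$ and ${\bm D}={\bm\Gamma}/S$, then $S\sim\Gamma(\lVert{\bm\alpha}\rVert_1)$ is independent of ${\bm D}\sim\mathrm{Dir}({\bm\alpha})$. Substituting $\Gamma_i=SD_i$ and factoring $S$ out of the maximum yields
\[
\EE\max_{i=1,\dots,d}\frac{\Gamma_i/\alpha_i}{x_i}=\EE[S]\cdot\EE\max_{i=1,\dots,d}\frac{D_i/\alpha_i}{x_i}=\lVert{\bm\alpha}\rVert_1\cdot\EE\max_{i=1,\dots,d}\frac{D_i/\alpha_i}{x_i},
\]
which is precisely the exponent function associated with the generator in (ii). Both ${\bm\Gamma}/{\bm\alpha}$ and $(\lVert{\bm\alpha}\rVert_1/{\bm\alpha}){\bm D}$ have unit means, qualifying them as valid generators in the sense of Lemma~\ref{lem:marZ}.

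For (i)$\Rightarrow$(iii), I would compute the angular measure directly from the Gamma generator ${\bm Z}={\bm\Gamma}/{\bm\alpha}$. The identity $\max_i(Z_i/x_i)=\lVert{\bm Z}\rVert_1\cdot\max_i(W_i/x_i)$ with ${\bm W}={\bm Z}/\lVert{\bm Z}\rVert_1\in\triangle_d$ shows that the angular measure $\nu$ on $\triangle_d$ is the push-forward of $\lVert{\bm Z}\rVert_1\,\PP({\bm Z}\in\cdot)$ under ${\bm Z}\mapsto{\bm Z}/\lVert{\bm Z}\rVert_1$; in particular, its density with respect to surface measure is $h({\bm\omega})=\int_0^\infty r\,q(r,{\bm\omega})\,{\rm d}r$, where $q$ denotes the joint density of $(r,{\bm\omega})=(\lVert{\bm Z}\rVert_1,{\bm Z}/\lVert{\bm Z}\rVert_1)$. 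The polar change of variables $\gamma_i=r\alpha_i\omega_i$ has Jacobian $r^{d-1}\prod_i\alpha_i$ and yields
\[
q(r,{\bm\omega})=r^{\lVert{\bm\alpha}\rVert_1-1}e^{-r\lVert{\bm\alpha}{\bm\omega}\rVert_1}\prod_{i=1}^d\frac{\alpha_i^{\alpha_i}\omega_i^{\alpha_i-1}}{\Gamma(\alpha_i)};
\]
integrating $r\,q(r,{\bm\omega})$ against ${\rm d}r$ then produces the Gamma factor $\Gamma(\lVert{\bm\alpha}\rVert_1+1)/\lVert{\bm\alpha}{\bm\omega}\rVert_1^{\lVert{\bm\alpha}\rVert_1+1}$, and rewriting the denominator as $\lVert{\bm\alpha}{\bm\omega}\rVert_1\cdot\prod_i\lVert{\bm\alpha}{\bm\omega}\rVert_1^{\alpha_i}$ recovers~\eqref{eq:DirichletAngularDensity}.

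Finally, (iii)$\Rightarrow$(i) is automatic: since the angular measure determines $V$ and hence the law, any random vector with the displayed angular density must coincide in distribution with the Gamma-generator construction. The only real obstacle in the argument is the careful bookkeeping of Gamma exponents, Jacobian factors, and normalising constants in the polar integral; no deeper analytic difficulty arises once these are tracked correctly.
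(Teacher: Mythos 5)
Your proposal is correct. The core of your argument for (i)$\Leftrightarrow$(ii) is exactly the paper's: both rest on the Gamma--Dirichlet decomposition ($S=\lVert{\bm\Gamma}\rVert_1\sim\Gamma(\lVert{\bm\alpha}\rVert_1)$ independent of ${\bm D}={\bm\Gamma}/S\sim\mathrm{Dir}({\bm\alpha})$), factoring $S$ out of the maximum and concluding via $\EE\lVert{\bm\Gamma}\rVert_1=\lVert{\bm\alpha}\rVert_1$. Where you genuinely diverge is in how (iii) enters: the paper simply cites \citet{colestawn91} for the equivalence of (ii) and (iii), whereas you derive the angular density \eqref{eq:DirichletAngularDensity} directly from the Gamma generator via the polar change of variables $\gamma_i=r\alpha_i\omega_i$ and the identification of the angular measure as the push-forward of $\lVert{\bm Z}\rVert_1\,\PP({\bm Z}\in\cdot)$ under normalisation. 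I checked your Jacobian $r^{d-1}\prod_i\alpha_i$, the intermediate density $q(r,{\bm\omega})$, and the Gamma integral producing $\Gamma(\lVert{\bm\alpha}\rVert_1+1)/\lVert{\bm\alpha}{\bm\omega}\rVert_1^{\lVert{\bm\alpha}\rVert_1+1}$; all are correct and the final rewriting recovers \eqref{eq:DirichletAngularDensity}. Your route buys a self-contained proof of all three equivalences from a single starting point, at the cost of the bookkeeping you acknowledge; the paper's route is shorter but outsources (ii)$\Leftrightarrow$(iii) to the literature. One small imprecision: the density in \eqref{eq:DirichletAngularDensity} is taken with respect to $(d-1)$-dimensional Lebesgue measure $\mathrm{d}\omega_1\cdots\mathrm{d}\omega_{d-1}$ on the projected simplex rather than the surface (Hausdorff) measure on $\triangle_d$ --- these differ by a constant factor $\sqrt{d}$ --- but your Jacobian computation in fact delivers precisely the Lebesgue-measure version, so only the wording, not the formula, needs adjusting.
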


To the best of our knowledge the representation through the Gamma generator, albeit inspired by \citet{afz_15} from the fully symmetric case, is new in this generality. We have added a proof in Appendix~\ref{app:DirichletHR}.
An advantage of the representation with the Gamma generator is that it reveals immediately the closure of the model with respect to taking marginal distributions, cf.~Lemma~\ref{lem:marZ}, a result that has been previously obtained in \citet{ballanischlather11}, but with a one-page proof and some intricate density calculations.

\begin{lemma}[Closure of Dirichlet model under taking marginals]
\label{lem:MaxDirClosure}
Let ${\bm X} = (X_1,\dots,X_d)^\top \sim \mathrm{MaxDir}(\alpha_1,\dots,\alpha_d)=\mathrm{MaxDir}({\bm \alpha})$  and $A \subset \{1,\dots,d\}$, then    ${\bm X}_A \sim \mathrm{MaxDir}({\bm \alpha}_A)$.
\end{lemma}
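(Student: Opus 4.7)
The plan is to combine representation (i) of the Dirichlet model in Theorem/Definition~\ref{thmdef:Dirichlet} with the marginal-closure property of generators stated in Lemma~\ref{lem:marZ}. Specifically, by representation (i), the random vector
\[ \bm\alpha^{-1}\bm\Gamma = (\Gamma_1/\alpha_1,\dots,\Gamma_d/\alpha_d)^\top \]
with independent $\Gamma_i\sim\Gamma(\alpha_i)$ is a generator of $\bm X$. I would then observe that extracting the subvector indexed by $A$ commutes with the componentwise scaling, so
\[ (\bm\alpha^{-1}\bm\Gamma)_A = (\Gamma_i/\alpha_i)_{i\in A} = (\bm\alpha_A)^{-1}\bm\Gamma_A, \]
and that $\bm\Gamma_A$ consists of independent Gamma-distributed components with the parameter vector $\bm\alpha_A$.

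By Lemma~\ref{lem:marZ}, $(\bm\alpha^{-1}\bm\Gamma)_A$ is a generator of $\bm X_A$. Since this generator is of the form required by condition (i) of Theorem/Definition~\ref{thmdef:Dirichlet} with parameter vector $\bm\alpha_A$, we conclude that $\bm X_A\sim\mathrm{MaxDir}(\bm\alpha_A)$.

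There is essentially no obstacle here: the whole point of recording representation (i) is that marginal closure becomes a one-line consequence of Lemma~\ref{lem:marZ}, in contrast to the direct angular-density calculation in \citet{ballanischlather11}. The only detail worth noting in a written-up version is that the normalisation $\EE(\Gamma_i/\alpha_i)=1$ is inherited by $\bm\Gamma_A/\bm\alpha_A$, so the subvector is indeed admissible as a generator without further rescaling.
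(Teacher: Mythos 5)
Your proposal is correct and is exactly the argument the paper intends: the lemma is presented as an immediate consequence of the Gamma generator representation (i) combined with Lemma~\ref{lem:marZ}, since the subvector $(\bm\alpha^{-1}\bm\Gamma)_A=(\bm\alpha_A)^{-1}\bm\Gamma_A$ is precisely a Gamma generator for the parameter vector $\bm\alpha_A$. Nothing is missing.
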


The angular density representation on the other hand is useful to see that different parameter vectors ${\bm \alpha} \neq {\bm \beta}$ lead in fact to different multivariate distributions $\mathrm{MaxDir}({\bm \alpha}) \neq \mathrm{MaxDir}({\bm \beta})$ for $d\geq 2$, so that $(0,\infty)^d$ is indeed the natural parameter space for this model.

\subsubsection{H\"usler-Rei{\ss} model}

The multivariate \emph{H\"usler-Rei{\ss} distribution} \citep{husler_89} forms the basis of the popular Brown-Resnick process \citep{kabluchko_09} and has sparked significant interest from the perspectives of spatial modelling \citep{davhustib19} and more recently in connection with graphical modelling of extremes \citep{enhitz20}.  
The natural parameter space for this model is the convex cone of  conditionally negative symmetric $d\times d$-matrices, whose diagonal entries are zero
\[
\mathcal{G}_d = \bigg\{ {\bm \gamma}=(\gamma_{ij})_{i,j \in \{1,\dots,d\}} \in \RR^{d \times d} \,:\, 
\begin{array}{c}
\gamma_{ij}=\gamma_{ji},\, \gamma_{ii}=0 \text{ for all $i,j \in \{1,\dots,d\}$, } \\
v^\top {\bm \gamma} v \leq 0 \text{ for all } v \in \RR^d \text{ such that } v_1+\dots+v_d=0 
\end{array}
\bigg\}.
\]
It is well-known, cf.~e.g.~\citet[Ch.~3]{bcr84}, that for a given ${\bm \gamma} \in \mathcal{G}_d$, there exists a zero mean Gaussian random vector ${\bm W}=(W_1,\dots,W_d)^\top$  with incremental variance
\begin{align}\label{eq:variogram}
    \EE (W_i-W_j)^2=\gamma_{ij}, \qquad i,j \in \{1,\dots,d\},
\end{align}
although its distribution is not uniquely specified by this condition. For instance, select $i \in \{1,\dots,d\}$. Imposing additionally the linear constraint
``$W_i=0$ almost surely'' leads to ${\bm W} \sim \mathcal{N}({\bm 0}, {\bm \Sigma_i})$ with
\begin{align*}
    ({\bm \Sigma_i})_{jk} = \frac{1}{2} \big( \gamma_{ij} + \gamma_{ik} - \gamma_{jk}\big),  \qquad j,k \in \{1,\dots,d\},
\end{align*}
which satisfies \eqref{eq:variogram}.

\begin{thmdef}[Multivariate H\"usler-Rei{ss} distribution, cf.~\citet{kabluchko_11} Theorem~1] \label{thmdef:HR}
Let ${\bm \gamma} \in \mathcal{G}_d$ and \eqref{eq:variogram} be valid.
Consider the simple max-stable random vector ${\bm X}=(X_1,\dots,X_d)^\top$ defined by the generator ${\bm Z}=(Z_1,\dots,Z_d)^\top$ with
\[Z_i = \exp\bigg(W_i - \frac{1}{2} \mathrm{Var}(W_i)\bigg), \qquad i=1,\dots,d.\]
Then the distribution of ${\bm X}$ depends only on ${\bm \gamma}$ and not on the specific choice of a zero mean Gaussian distribution satisfying \eqref{eq:variogram}. 
We call ${\bm X}$  simple \emph{H\"usler-Rei{\ss}} distributed with parameter matrix ${\bm \gamma}$ and write for short
\[{\bm X}=(X_1,\dots,X_d)^\top ~\sim \mathrm{HR}({\bm \gamma}).\]
\end{thmdef}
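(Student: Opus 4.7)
I need to show that $V(\bm x) = \EE \max_{i=1,\dots,d} Z_i/x_i$ depends on $\bm W$ only through $\bm\gamma$ (and $\bm x$). Since $\EE Z_i = 1$ for each $i$ by the log-normal normalisation, decomposing the expected maximum by argmax and applying an exponential tilt is the natural strategy.

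Concretely, for fixed $\bm x \in (0,\infty)^d$ I would start from
\begin{equation*}
V(\bm x) = \sum_{i=1}^d \EE\bigl[ (Z_i/x_i)\,\mathbbm{1}\{Z_i/x_i \geq Z_k/x_k \text{ for all } k\}\bigr],
\end{equation*}
with any measurable tie-breaking rule (harmless whenever $\gamma_{ki}>0$, and the degenerate case follows by taking limits or by a direct reduction to a lower-dimensional problem). For each $i$, the density $\mathrm{d}\tilde{\PP}_i/\mathrm{d}\PP := Z_i$ defines a probability measure, and an elementary moment-generating-function computation shows that $\bm W$ is Gaussian under $\tilde{\PP}_i$ with the same covariance but shifted mean $\EE^{\tilde{\PP}_i}[W_k] = \mathrm{Cov}(W_k, W_i)$.

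Using $\EE[Z_i\,\mathbbm{1}_A] = \tilde{\PP}_i(A)$ and passing to logarithms, the $i$-th summand becomes
\begin{equation*}
\frac{1}{x_i}\, \tilde{\PP}_i\bigl( W_k - W_i \leq \tfrac12(\sigma_k^2 - \sigma_i^2) + \log(x_k/x_i) \text{ for all } k\neq i \bigr),
\end{equation*}
where $\sigma_i^2 = \mathrm{Var}(W_i)$. Under $\tilde{\PP}_i$, the vector $(W_k - W_i)_{k\neq i}$ is Gaussian with mean $\mathrm{Cov}(W_k,W_i) - \sigma_i^2 = \tfrac12(\sigma_k^2-\sigma_i^2) - \tfrac12 \gamma_{ki}$ (using $\gamma_{ki} = \sigma_k^2 + \sigma_i^2 - 2\,\mathrm{Cov}(W_k,W_i)$) and covariance $\mathrm{Cov}(W_k-W_i, W_l-W_i) = \tfrac12(\gamma_{ki}+\gamma_{li}-\gamma_{kl})$. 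The key observation is that once I recentre the Gaussians, the $\tfrac12(\sigma_k^2-\sigma_i^2)$ contribution appears on both sides and cancels; only $\bm\gamma$ and $\bm x$ remain.

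Collecting terms yields the familiar closed-form representation
\begin{equation*}
V(\bm x) = \sum_{i=1}^d \frac{1}{x_i}\, \Phi_{d-1}\Bigl( \bigl( \tfrac12 \gamma_{ki} + \log(x_k/x_i) \bigr)_{k\neq i};\, \bm\Sigma^{(i)} \Bigr), \qquad \bm\Sigma^{(i)}_{kl} = \tfrac12(\gamma_{ki}+\gamma_{li}-\gamma_{kl}),
\end{equation*}
from which invariance is immediate. The only nontrivial step is the algebraic bookkeeping of the cancellation between the log-normal correction $-\sigma_i^2/2$ inside each $\log Z_i$ and the shifted mean $\mathrm{Cov}(W_k, W_i)$ produced by the tilt; this is the main (and really only) obstacle.
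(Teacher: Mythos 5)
Your argument is correct. Note first that the paper does not actually prove this statement: it is stated as a Theorem/Definition with a pointer to Kabluchko (2011), Theorem~1, and no proof appears in the appendix. So your derivation is a self-contained substitute for the cited result rather than a variant of an in-paper proof. The route you take -- decompose $\EE\max_i Z_i/x_i$ over the argmax, tilt by $\mathrm{d}\tilde\PP_i/\mathrm{d}\PP=Z_i$, and observe that under $\tilde\PP_i$ the increment vector $(W_k-W_i)_{k\neq i}$ has mean $\tfrac12(\sigma_k^2-\sigma_i^2)-\tfrac12\gamma_{ki}$ and covariance $\tfrac12(\gamma_{ki}+\gamma_{li}-\gamma_{kl})$, so that the $\tfrac12(\sigma_k^2-\sigma_i^2)$ terms cancel against the thresholds -- is the standard derivation of the closed-form H\"usler--Rei{\ss} exponent function, and all the Gaussian bookkeeping checks out (in $d=2$ your formula reduces exactly to the paper's \eqref{eq:ellHR} with $\eta^2=\gamma_{12}$). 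Kabluchko's own proof establishes the same invariance by reducing the finite-dimensional distributions to functionals of the increments; your exponential tilt makes that reduction explicit and additionally yields the explicit $\Phi_{d-1}$ representation, which the paper never needs but which is a nice by-product.

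Two small points worth tightening if you write this up. First, in the degenerate case $\gamma_{ki}=0$ one has $W_k=W_i$ a.s.\ and hence $Z_k=Z_i$ a.s., so ties on the argmax event can have positive probability; your disjoint tie-breaking partition still sums to $\EE\max$, and each piece $\tilde\PP_i(A_i)$ is a functional of the (possibly degenerate) Gaussian law of the increments, which is still determined by $\bm\gamma$ alone -- so invariance survives, but the clean $\Phi_{d-1}$ formula with non-strict inequalities requires a word of justification there. Second, you should state explicitly that invariance of $V$ on $(0,\infty)^d$ gives invariance of $G=\exp(-V)$ and hence of the law of $\bm X$; this is immediate but is the actual claim of the statement.
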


We also note that for ${\bm \gamma}_1, \, {\bm \gamma}_2 \in \mathcal{G}_d$, the distributions $\mathrm{HR}({\bm \gamma}_1)$ and $\mathrm{HR}({\bm \gamma}_2)$ coincide if and only if ${\bm \gamma}_1 = {\bm \gamma}_2$, so that $\mathcal{G}_d$ is indeed the natural parameter space for these models. This follows directly from the observation that the multivariate H\"usler-Rei{\ss} model is also closed under taking marginal distributions and the equivalent statement for bivariate H\"usler-Rei{\ss} models, which can be seen for instance from \eqref{eq:ellHR} below. Indeed, we also state the following lemma for clarity. It follows directly from the generator representation of $\mathrm{HR}({\bm \gamma})$ and Lemma~\ref{lem:marZ}.

\begin{lemma}[Closure of H\"usler-Rei{\ss} model under taking marginals]
Let ${\bm X} = (X_1,\dots,X_d)^\top \sim \mathrm{HR}({\bm \gamma})$ and $A \subset \{1,\dots,d\}$, then    ${\bm X}_A \sim \mathrm{HR}({\bm \gamma}_{A \times A})$, where ${\bm \gamma}_{A \times A}$ is the restriction of ${\bm \gamma}$ to the components of $A$ in both rows and columns.
\end{lemma}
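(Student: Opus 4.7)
The plan is to combine Lemma~\ref{lem:marZ} with the explicit form of the generator given in Theorem/Definition~\ref{thmdef:HR}, so that the claim reduces to verifying that the Gaussian-log-normal structure is preserved under taking subvectors.

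First I would fix ${\bm X} \sim \mathrm{HR}({\bm \gamma})$ and invoke Theorem/Definition~\ref{thmdef:HR} to pick a zero-mean Gaussian vector ${\bm W} = (W_1,\dots,W_d)^\top$ satisfying the incremental variance condition \eqref{eq:variogram}, so that the random vector ${\bm Z}$ with components $Z_i = \exp(W_i - \tfrac12 \mathrm{Var}(W_i))$ is a generator of ${\bm X}$. By Lemma~\ref{lem:marZ}, the subvector ${\bm Z}_A$ is then a generator for ${\bm X}_A$.

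The next step is to recognise that ${\bm Z}_A$ already has exactly the form prescribed by Theorem/Definition~\ref{thmdef:HR} for the parameter matrix ${\bm \gamma}_{A \times A}$. Indeed, ${\bm W}_A = (W_i)_{i \in A}$ is a zero-mean Gaussian vector whose incremental variances are
\[ \EE(W_i - W_j)^2 = \gamma_{ij} = ({\bm \gamma}_{A \times A})_{ij}, \qquad i,j \in A, \]
so ${\bm W}_A$ satisfies \eqref{eq:variogram} with respect to ${\bm \gamma}_{A \times A} \in \mathcal{G}_{|A|}$ (membership in $\mathcal{G}_{|A|}$ is immediate, since restricting to rows and columns in $A$ preserves symmetry, a vanishing diagonal, and conditional negative definiteness along vectors $v \in \RR^{|A|}$ with zero sum, by embedding such $v$ into $\RR^d$ with zero entries outside $A$). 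Since $Z_i = \exp(W_i - \tfrac12 \mathrm{Var}(W_i))$ for each $i \in A$, the subvector ${\bm Z}_A$ is precisely a generator of the shape required to define a $\mathrm{HR}({\bm \gamma}_{A \times A})$ random vector.

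Therefore ${\bm X}_A$ admits a generator of H\"usler-Rei{\ss} type with parameter ${\bm \gamma}_{A \times A}$, which by Theorem/Definition~\ref{thmdef:HR} identifies its law as $\mathrm{HR}({\bm \gamma}_{A \times A})$. There is no real obstacle here; the only point that requires a remark is the well-definedness of $\mathrm{HR}({\bm \gamma}_{A \times A})$, i.e.\ checking that ${\bm \gamma}_{A \times A} \in \mathcal{G}_{|A|}$, which is the small verification outlined above.
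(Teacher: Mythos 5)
Your proof is correct and follows exactly the route the paper indicates: the paper states that the lemma ``follows directly from the generator representation of $\mathrm{HR}({\bm \gamma})$ and Lemma~\ref{lem:marZ}'', and you have simply filled in the details of that argument, including the (easy but worth noting) verification that ${\bm \gamma}_{A \times A} \in \mathcal{G}_{|A|}$.
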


It is well-known that up to a change of location and scale parameters H\"usler-Rei{\ss} distributions are the only possible limit laws of maxima of triangular arrays of multivariate Gaussian distributions, a finding which can be traced back to \citet{husler_89} and \citet{brownresnick77}.
The following version will be convenient for us.

\begin{theorem}[Triangular array convergence of maxima of Gaussian vectors, cf.~\citet{kabluchko_11} Theorem~2] 
\label{thm:triangularArrayHR}
Let $u_n$ be a sequence such that $\sqrt{2\pi} u_n e^{u_n^2/2}/n \to 1$ as $n \to \infty$. For each $n \in \NN$ let ${\bm Y^{(n)}_1},{\bm Y^{(n)}_2},\dots, {\bm Y^{(n)}_n}$ be independent copies of a $d$-variate zero mean unit-variance Gaussian random vector with correlation matrix $(\rho^{(n)}_{ij})_{i,j \in \{1,\dots,d\}}$.
Suppose that for all $i,j \in \{1,\dots,d\}$
\[
4 \log(n) (1-\rho^{(n)}_{ij}) \to \gamma_{ij} \in [0,\infty) 
\]
as $n \to \infty$. Then the matrix 
    ${\bm \gamma}=(\gamma_{ij})_{i,j \in \{1,\dots,d\}}$ is necessarily and element of $\mathcal{G}_d$. Let ${\bm M}^{(n)}$ be the componentwise maximum of ${\bm Y^{(n)}_1},{\bm Y^{(n)}_2},\dots, {\bm Y^{(n)}_n}$. Then the componentwise rescaled vector $u_n({\bm M}^{(n)}-u_n)$ converges in distribution to the H\"usler-Rei{\ss} distribution $\mathrm{HR}({\bm \gamma})$.
\end{theorem}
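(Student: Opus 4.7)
The proof has two parts: verifying $\boldsymbol{\gamma}\in\mathcal{G}_d$ and establishing the distributional convergence. For the first, note that $2(1-\rho_{ij}^{(n)})=\mathbb{E}(Y_i^{(n)}-Y_j^{(n)})^2$, so $(1-\rho_{ij}^{(n)})_{i,j}$ is (half) the variogram matrix of a Gaussian vector and is therefore conditionally negative definite in the sense required by $\mathcal{G}_d$. Pointwise limits preserve this property, together with symmetry and the vanishing diagonal, so $\boldsymbol{\gamma}\in\mathcal{G}_d$. (Equivalently, since $\sqrt{\gamma_{ij}}$ will appear as an $L^2$-metric in the limit, conditional negative definiteness is forced by the classical characterisation of negative-definite kernels.)

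For the convergence, I would work on Gumbel scale (and afterwards exponentiate, since $u_n(M_i^{(n)}-u_n)$ converges marginally to a Gumbel, while $\mathrm{HR}(\boldsymbol{\gamma})$ in Theorem/Definition~\ref{thmdef:HR} is defined on unit Fréchet margins). Setting $F_n(\boldsymbol{x})=\mathbb{P}\bigl(u_n(M_i^{(n)}-u_n)\le x_i,\ i=1,\dots,d\bigr)$ and using i.i.d.\ copies,
\[
F_n(\boldsymbol{x})=\Bigl(1-\mathbb{P}\bigl(\textstyle\bigcup_{i=1}^d A_i^{(n)}\bigr)\Bigr)^n,\qquad A_i^{(n)}=\{Y_i^{(n,1)}>u_n+x_i/u_n\}.
\]
The normalising condition $\sqrt{2\pi}u_n e^{u_n^2/2}/n\to 1$ is exactly $n\bar\Phi(u_n)\to 1$, which via Mill's ratio gives $n\mathbb{P}(A_i^{(n)})\to e^{-x_i}$, so $\mathbb{P}(\bigcup_i A_i^{(n)})\to 0$ and $-\log F_n(\boldsymbol{x})\sim n\mathbb{P}(\bigcup_i A_i^{(n)})$. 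By inclusion--exclusion it therefore suffices to compute, for every non-empty $S\subset\{1,\dots,d\}$, the limit $\mu_S(\boldsymbol{x}):=\lim_{n\to\infty}n\mathbb{P}\bigl(\bigcap_{i\in S}A_i^{(n)}\bigr)$, and then identify $\sum_{S\ne\emptyset}(-1)^{|S|+1}\mu_S(\boldsymbol{x})$ with the HR exponent function $V_{\mathrm{HR}(\boldsymbol{\gamma})}(e^{-x_1},\dots,e^{-x_d})$.

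The core technical step is computing $\mu_S(\boldsymbol{x})$. Fix $S$ and pick a reference index $i\in S$. Conditioning on $Y_i^{(n)}=u_n+x_i/u_n+s/u_n$, the remaining coordinates $\{Y_j^{(n)}\}_{j\in S\setminus\{i\}}$ are Gaussian with conditional mean $\rho_{ij}^{(n)}(u_n+x_i/u_n+s/u_n)$ and covariance $(\rho_{jk}^{(n)}-\rho_{ij}^{(n)}\rho_{ik}^{(n)})$; the assumption $4\log(n)(1-\rho_{ij}^{(n)})\to\gamma_{ij}$ lets us Taylor-expand $u_n(Y_j^{(n)}-u_n)$ around $x_i-\gamma_{ij}/2$ with a Gaussian fluctuation whose covariance structure matches that of the generator $\boldsymbol{Z}$ from Theorem/Definition~\ref{thmdef:HR} (specifically, the conditional covariance of $u_n(Y_j^{(n)}-u_n)$ given $Y_i^{(n)}$ converges to $\tfrac{1}{2}(\gamma_{ij}+\gamma_{ik}-\gamma_{jk})$). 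Combining this with the marginal asymptotic $n\phi(u_n+x_i/u_n)/u_n\to e^{-x_i}$ and integrating over the conditioning variable, one obtains
\[
\mu_S(\boldsymbol{x})=\int e^{-x_i-s}\,\mathbb{P}\Bigl(x_j-\gamma_{ij}/2+\tilde W_j\ge x_i+s,\ \forall j\in S\setminus\{i\}\Bigr)\,\mathrm{d}s,
\]
where $\tilde W_j$ is zero-mean Gaussian with the above covariance. A short rewrite using Theorem/Definition~\ref{thmdef:HR} identifies this integral as $\mathbb{E}\bigl[Z_i e^{-x_i}\,\mathbb{1}\{Z_i e^{-x_i}\ge Z_j e^{-x_j},\ \forall j\in S\setminus\{i\}\}\bigr]$ with $\boldsymbol{Z}$ the HR generator, so that $\sum_i\mathbb{E}[Z_ie^{-x_i}\mathbb{1}\{Z_ie^{-x_i}=\max_{j\in S}Z_je^{-x_j}\}]=\mathbb{E}[\max_{j\in S}Z_je^{-x_j}]$. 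The inclusion--exclusion sum over $S$ then collapses (telescopes through the ``$\max$ over all $j$'' identity) to $\mathbb{E}[\max_{j=1,\dots,d}Z_je^{-x_j}]=V_{\mathrm{HR}(\boldsymbol{\gamma})}(e^{-x_1},\dots,e^{-x_d})$, completing the proof after exponentiating back to Fréchet scale.

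The main obstacle is the conditional Gaussian tail computation in the previous paragraph: one needs uniform control (in $s$ and in the covariance perturbations of order $1/\log n$) to justify passing the limit inside the integral, and one must check carefully that the limiting covariance structure of the $\tilde W_j$ is exactly the one prescribed by $\boldsymbol{\Sigma}_i$ in Theorem/Definition~\ref{thmdef:HR}, so that the limit is indeed expressible through the HR generator and does not depend on the arbitrary choice of reference index $i\in S$.
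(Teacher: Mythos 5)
First, a point of comparison: the paper does not prove this statement at all --- it is imported verbatim from \citet{kabluchko_11}, Theorem~2 (building on \citet{husler_89}), so there is no in-paper argument to measure your attempt against. Your overall architecture is the standard one for this result and most of it is sound: the membership ${\bm \gamma}\in\mathcal{G}_d$ follows exactly as you say (since $2(1-\rho^{(n)}_{ij})=\EE(Y_i^{(n)}-Y_j^{(n)})^2$ is a Gaussian variogram it is conditionally negative definite, and this is preserved under multiplication by $2\log n$ and under pointwise limits); the reduction $-\log F_n({\bm x})\sim n\,\PP(\bigcup_i A_i^{(n)})$ via $n\bar\Phi(u_n)\to 1$ is correct; and the conditional Gaussian expansion is right, including the identification of the limiting conditional covariance with $({\bm \Sigma_i})_{jk}=\tfrac12(\gamma_{ij}+\gamma_{ik}-\gamma_{jk})$ from Theorem/Definition~\ref{thmdef:HR}.

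The gap is in the identification of $\mu_S$. Carrying out your own conditioning correctly, the event $A_j^{(n)}$ reads $u_n(Y_j^{(n)}-u_n)>x_j$, which in the limit becomes $x_i+s-\gamma_{ij}/2+\tilde W_j>x_j$ (your displayed inequality has the roles of $x_i+s$ and $x_j$ interchanged), and the integral must run over $s>0$ only, since $A_i^{(n)}$ itself has to hold. This gives
\[
\mu_S({\bm x})=\int_0^\infty e^{-x_i-s}\,\PP\Big(x_i+s-\tfrac{\gamma_{ij}}{2}+\tilde W_j> x_j\ \ \forall j\in S\setminus\{i\}\Big)\,\mathrm{d}s
=\EE\Big[\min_{j\in S}e^{-x_j}Z_j\Big],
\]
i.e.\ the tail-dependence functional of the subset $S$ (consistent with \eqref{eq:theta} and the exponent-measure identity $\Lambda(\{y: y_j>e^{x_j}\ \forall j\in S\})=\EE\min_{j\in S}e^{-x_j}Z_j$), and \emph{not} the quantity $\EE[Z_ie^{-x_i}\mathbf{1}\{Z_ie^{-x_i}\ge Z_je^{-x_j}\ \forall j\}]$ that you assert. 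The latter depends on the reference index $i$, which $\lim_n n\,\PP(\bigcap_{j\in S}A_j^{(n)})$ cannot, and it differs from $\EE[\min_{j\in S}e^{-x_j}Z_j]$ already for $|S|=2$; moreover, inserting argmax-restricted expectations into the alternating sum over $S$ does not yield $\EE[\max_j e^{-x_j}Z_j]$. The inclusion--exclusion sum actually collapses via the pointwise identity $\max_j a_j=\sum_{S\neq\emptyset}(-1)^{|S|+1}\min_{j\in S}a_j$ applied to $a_j=e^{-x_j}Z_j$. (The argmax partition $\EE\max_j a_jZ_j=\sum_i\EE[a_iZ_i\mathbf{1}\{a_iZ_i=\max_j a_jZ_j\}]$ does underpin a valid \emph{alternative} route --- decompose $\bigcup_iA_i^{(n)}$ according to which normalised coordinate is largest and use no inclusion--exclusion at all --- but you cannot splice that identity into the subset-wise alternating sum as written.) With this repair, together with the uniform control you already flag as the technical obstacle, the argument goes through.
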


\begin{remark} \label{ref:HRextended}
    In the bivariate case we have $\gamma_{12}=\gamma_{21}=\gamma \in [0,\infty)$ and the boundary case $\gamma=0$ leads to a degenerate random vector with fully dependent components, whereas $\gamma \uparrow \infty$ leads to a random vector with independent components. More generally, one might also admit the value $\infty$ for $\gamma_{ij}$ in the multivariate case, as long as the resulting matrix ${\bm \gamma}$ is \emph{negative definite in the extended sense}, cf.~\citet{kabluchko_11}. This extension corresponds to a partition of ${\bm X}$ into independent  subvectors ${\bm X} = \bigsqcup_A {\bm X}_A$, where each ${\bm X}_A$ is a H\"usler-Rei{\ss} random vector in the usual sense. Here $\gamma_{ij}=\infty$ precisely when $i$ and $j$ are in different subsets of the partition.  
    Theorem~\ref{thm:triangularArrayHR} extends to this situation as well. In fact, is has been formulated in this generality in \citet{kabluchko_11}.
\end{remark}

\subsubsection{Choquet model / Tawn-Molchanov model}
\label{sec:choquet}

A popular way to summarise extremal dependence information within a random vector is by considering its \emph{extremal coefficients},
which in the case of a simple max-stable random vector ${\bm X}=(X_1,X_2,\dots,X_d)^\top$ 
may be expressed as
\[\theta(A)=\ell({\bm e}_A), 
\qquad 
{\bm e}_A = \sum_{i \in A} {\bm e}_i, \qquad 
A \subset \{1,\dots,d\}, \, A \neq \emptyset,\]
or, equivalently,
\begin{align}\label{eq:theta}
    \theta(A) = \EE \max_{i \in A} Z_i = 
\int_{\Omega} \max_{i \in A} {f_i(\omega)}\; \nu(\mathrm{d}\omega)= \Lambda\Big( \big\{y \in [0,\infty)^d \,:\, \max_{i \in A}(y_i) > 1 \big\}\Big),
\end{align}
where $\ell$ is the stable tail dependence function, ${\bm Z}$ a generator, $\Lambda$ the exponent measure and $(\Omega,\mathcal{A},\nu,f)$ a spectral representation for ${\bm X}$. Loosely speaking, the coefficient $\theta(A)$, which takes values in $[1,|A|]$, can be interpreted as the effective number of independent variables among the collection $(X_i)_{i \in A}$. We have $\theta(\{i\})=1$ for singletons $\{i\}$ and naturally $\theta(\emptyset)=0$.

The following result can be traced back to \citet{schlatawn02} and \citet{molchanov_08}. 
Accordingly, the associated max-stable model, which can be parametrised by its extremal coefficients, has been introduced as \emph{Tawn-Molchanov model} in \citet{strsch_15}. It is essentially an application of the the Choquet theorem (see \citet{molchanovTheory} Section~1.2 and \citet{bcr84} Theorem~6.6.19), which also holds for not necessarily finite capacities (see \citet{schneiderweil2008} Theorem 2.3.2). Therefore, it has been relabelled  \emph{Choquet model} in \citet{mostr16}, cf.~Appendix~\ref{app:harmonic} for background on complete alternation. We write $\mathcal{P}_d$ for the power set of $\{1,\dots,d\}$ henceforth.

\begin{theorem} \label{thm:choquet}
\begin{enumerate}[a)]
\itemsep0mm
    \item Let $\theta: \mathcal{P}_d \to \RR$. Then $\theta$ is the extremal coefficient function of a simple max-stable random vector in $(0,\infty)^d$ if and only if $\theta(\emptyset)=0$, $\theta(\{i\})=1$ for all $i=1,\dots,d$ and $\theta$ is union-completely alternating.
    \item Let  $\theta: \mathcal{P}_d \to \RR$ be an extremal coefficient function. Let 
\[\ell^*({\bm x}) = \int_0^\infty \theta(\{i \,:\, x_i \geq t\}) \, dt, \quad {\bm x} \in [0,\infty)^d\]
be the Choquet integral with respect to $\theta$. Then $\ell^{*}$ is a valid stable tail dependence function, which retrieves the given extremal coefficients
$\ell^{*}({\bm e}_A)=\theta(A)$ for all $A \in \mathcal{P}_d$. Its max-zonoid is given by
\[K^*
= \big\{{\bm k} \in [0,\infty)^d  \,:\, \langle {\bm k}, {\bm e}_A \rangle \leq \theta(A) \text{ for all } A \in \mathcal{P}_d \big\}. \]

\item Let $\ell$ be any stable tail dependence function with extremal coefficient function $\theta$ and $K$ its corresponding max-zonoid. Then
\[\ell({\bm x}) \leq \ell^*({\bm x}), \quad {\bm x} \geq 0 \qquad \text{and} \qquad K \subset K^*. \]
\end{enumerate}
\end{theorem}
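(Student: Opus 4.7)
The plan is to prove (a), (b), (c) in order, with the converse direction of (a) deferred until after the construction of $\ell^*$ in (b).

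For the forward direction of part (a), I would use the exponent-measure representation \eqref{eq:theta}. Setting $B_A := \{y \in [0,\infty)^d : \max_{i \in A} y_i > 1\}$ one has $\theta(A) = \Lambda(B_A)$; then $\theta(\emptyset)=0$ and $\theta(\{i\})=1$ follow from $B_\emptyset = \emptyset$ and from the normalisation of $\Lambda$, respectively. Union-complete alternation of $\theta$ is a consequence of the fact that $A \mapsto B_A$ is a morphism of join-semilattices ($B_{A \cup B} = B_A \cup B_B$), together with the standard observation (Appendix~\ref{app:harmonic}) that any set function $A \mapsto \mu(B_A)$ built from a measure $\mu$ and a union-preserving set-valued map is completely alternating. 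The converse of (a) will then follow from (b): given $\theta$ satisfying the three properties, the function $\ell^*$ constructed in (b) is a valid stable tail dependence function, and the simple max-stable distribution associated to it has $\theta$ as its extremal coefficient function, since $\ell^*({\bm e}_A) = \theta(A)$.

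For part (b), the 1-homogeneity of $\ell^*$ follows from the substitution $u=t/c$ in the Choquet integral, and $\ell^*({\bm e}_A) = \theta(A)$ because $\{i : ({\bm e}_A)_i \geq t\} = A$ for $t \in (0,1]$ and is empty for $t > 1$. The technical heart is to verify that $\ell^*$ is max-completely alternating; here I would invoke the Choquet-style correspondence collected in Appendix~\ref{app:harmonic}, which translates union-complete alternation on $\mathcal{P}_d$ into max-complete alternation of the associated Choquet integral on $[0,\infty)^d$. The max-zonoid identity then follows from \eqref{eq:K-from-ell}: one inclusion is immediate by specialising the test vector ${\bm x}$ to ${\bm e}_A$, while the reverse inclusion uses the sorted decomposition below to write a generic ${\bm x} \geq {\bm 0}$ as a non-negative combination of the vectors ${\bm e}_A$.

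For part (c), the key inequality is $\ell \leq \ell^*$. Fix ${\bm x} \geq {\bm 0}$ and choose a permutation $\sigma$ with $x_{\sigma(1)} \geq \dots \geq x_{\sigma(d)} \geq 0 =: x_{\sigma(d+1)}$; set $A_k := \{\sigma(1),\dots,\sigma(k)\}$. The telescoping identity ${\bm x} = \sum_{k=1}^d (x_{\sigma(k)}-x_{\sigma(k+1)}){\bm e}_{A_k}$, combined with the subadditivity of the maximum, yields for any generator ${\bm Z}$ of $\ell$
\[
\max_{i=1,\dots,d} x_i Z_i \;\leq\; \sum_{k=1}^d (x_{\sigma(k)}-x_{\sigma(k+1)}) \max_{i \in A_k} Z_i.
\]
Taking expectations and recognising the right-hand side as the layer-cake form of the Choquet integral $\ell^*({\bm x})$ proves $\ell({\bm x}) \leq \ell^*({\bm x})$. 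The inclusion $K \subset K^*$ is then immediate from \eqref{eq:K-from-ell}, since any ${\bm k}\in K$ satisfies $\langle {\bm k},{\bm e}_A\rangle \leq \ell({\bm e}_A) = \theta(A)$ for every $A$. The hard part of the programme is the max-complete alternation of $\ell^*$ in (b), which rests on the lattice/cone Choquet-duality of the appendix; the remaining steps are routine once the sorted layer-cake decomposition is in hand.
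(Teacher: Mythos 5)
The paper does not actually prove this theorem: it is stated with attributions to \citet{schlatawn02} and \citet{molchanov_08} and described as ``essentially an application of the Choquet theorem'', so your blind attempt is being compared against a citation rather than an in-paper argument. On its own merits, your proposal is essentially sound. The forward direction of (a) via $B_{A\cup B}=B_A\cup B_B$ and the inclusion--exclusion identity $\sum_{I\subset\{1,\dots,n\}}(-1)^{|I|}\Lambda\bigl(B_A\cup\bigcup_{i\in I}B_{A_i}\bigr)=-\Lambda\bigl(\bigcap_i(B_{A_i}\setminus B_A)\bigr)\leq 0$ is correct (and uses that $B_A$ is bounded away from the origin, so all these masses are finite). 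The sorted layer-cake decomposition ${\bm x}=\sum_k(x_{\sigma(k)}-x_{\sigma(k+1)}){\bm e}_{A_k}$ correctly delivers both the reverse inclusion for $K^*$ in (b) and, via subadditivity of the maximum under non-negative combinations, the inequality $\ell\leq\ell^*$ in (c); the inclusion $K\subset K^*$ then follows from \eqref{eq:K-from-ell} exactly as you say.

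The one place where your argument leans on something that is not there is the max-complete alternation of $\ell^*$. You invoke a ``Choquet-style correspondence collected in Appendix~\ref{app:harmonic} which translates union-complete alternation on $\mathcal{P}_d$ into max-complete alternation of the associated Choquet integral'' --- but the only relevant statement in that appendix, Lemma~\ref{lem:ell-maxCA-theta-unionCA}, goes in the \emph{opposite} direction (from max-complete alternation of $\ell$ to union-complete alternation of $\theta$). The correspondence you need is true, but its proof is precisely the discrete Choquet theorem: union-complete alternation of $\theta$ with $\theta(\emptyset)=0$ is equivalent, by M\"obius inversion, to non-negativity of the masses $\tau(A)$ defined in Section~\ref{sec:choquet}, and then
\[
\ell^*({\bm x})=\int_0^\infty\sum_{A}\tau(A)\,\mathbf{1}\bigl[A\cap\{i:x_i\geq t\}\neq\emptyset\bigr]\,dt=\sum_{A}\tau(A)\max_{i\in A}x_i,
\]
which exhibits $\ell^*$ as a non-negative mixture of the (trivially valid) stable tail dependence functions ${\bm x}\mapsto\max_{i\in A}x_i$, i.e.\ as arising from the discrete spectral measure \eqref{eq:Choquet-spectral}. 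Inserting this step (or explicitly citing the Choquet theorem as the paper does) closes the gap; everything else in your programme, including deducing the converse of (a) from (b), then goes through.
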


\begin{example}[Choquet model in the bivariate case]
    Let $\ell$ be a  bivariate stable tail dependence function and $\theta=\ell(1,1)\in [1,2]$ the bivariate extremal coefficient. Then the associated Choquet model is given by the max-zonoid $K^*= \{(x_1,x_2) \in [0,1]^2\,:\, x_1+x_2 \leq \theta\}$ or the stable tail dependence function $\ell^*(x_1,x_2)=\max(x_1 + (\theta-1) x_2, (\theta-1) x_1 + x_2)$. Figure~\ref{fig:TMdepsets} displays a situation, where the original $\ell$ stems from an asymmetric Dirichlet model.
\end{example}

\begin{figure}[htb]
    \centering
    \includegraphics[width=0.3\textwidth]{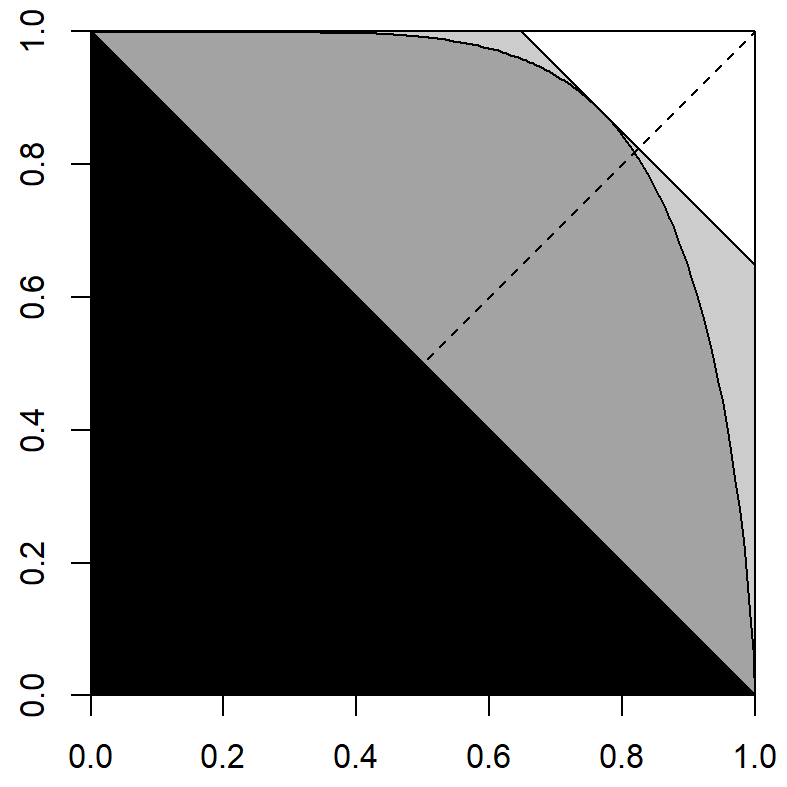}
    \includegraphics[width=0.3\textwidth]{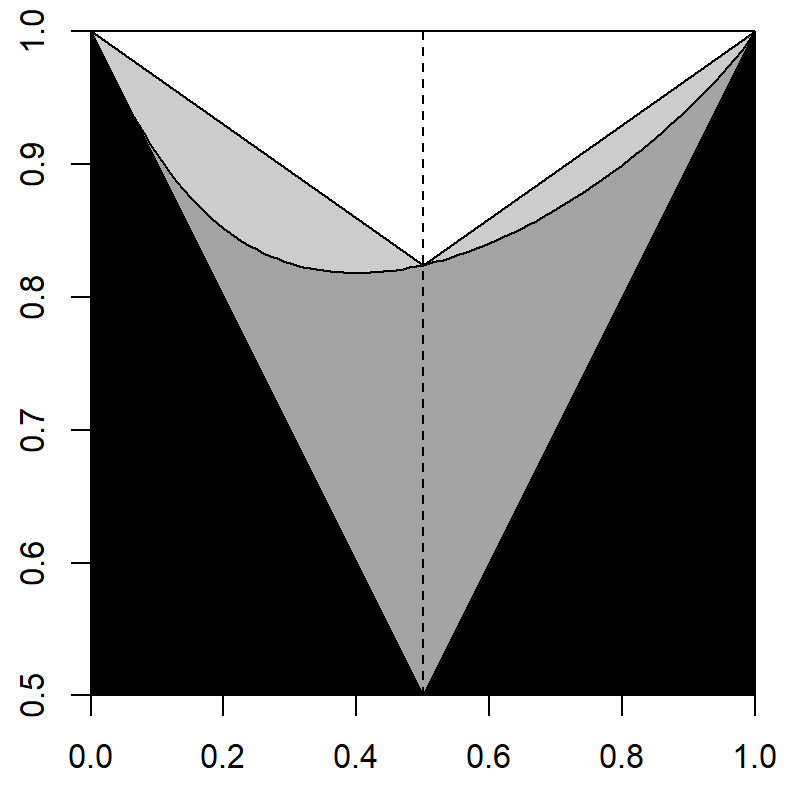}
    \caption{\small Nested max-zonoids and Pickands dependence functions ranging from full dependence (black), an asymmetric Dirichlet model with ${\bm \alpha}=(30,0.2)$ (dark grey), its associated Choquet (Tawn-Molchanov) model (light grey) to the fully independent model (white).}
    \label{fig:TMdepsets}
\end{figure}

In geometric terms, for any given max-zonoid $K \subset [0,1]^d$ the associated Choquet max-zonoid $K^* \subset [0,1]^d$ is bounded by $2^d-1$ hyperplanes, one for each direction ${\bm e}_A$, which is the supporting hyperplane of the max-zonoid $K$ in the direction of ${\bm e}_A$.

The Choquet model is a spectrally discrete max-stable model, whose exponent measure has its support contained in the rays through the vectors ${\bm e}_A$, $A \subset \{1,\dots,d\}$, $A \neq \emptyset$.
While its natural parameter space is the set of extremal coefficients, we can also describe it via the mass that the model puts on those rays. To this end, let $\tau: \mathcal{P}_d \setminus \{\emptyset\} \to \RR$ be given as follows 
\begin{align*}
   \tau(A) &= - (\Delta_{a_1} \Delta_{a_2} \dots \Delta_{a_n} \theta)(\{1,\dots,d\} \setminus A )=\sum_{I \subset A} (-1)^{|I|+1} \theta(I \cup (\{1,\dots,d\}\setminus A)) , 
\end{align*}
where we assume $a_1,a_2,\dots, a_n$ to be the distinct elements from $A \subset \{1,\dots,d\}$. Then the spectral representation $(\Omega,\mathcal{A},\nu^{*},f)$ with $\Omega=\{{\bm \omega} \in [0,\infty)^2 \,:\, \lVert {\bm \omega} \rVert_\infty = 1\}$, $f_i({\bm \omega})=\omega_i$ and 
\begin{align}\label{eq:Choquet-spectral}
\nu^{*} = \sum_{A \in \mathcal{P}_d \setminus \{\emptyset\}} \tau(A) \,\delta_{{\bm e}_A}
\end{align}
corresponds to the stable tail dependence function $\ell^{*}$ from Theorem~\ref{thm:choquet}.
In terms of an underlying generator for which \eqref{eq:theta} holds true, we may express $\tau$ as
\[\tau(A) = \EE \Big(\min_{i \in A} Z_i - \max_{i \in \{1,\dots,d\}\setminus A} Z_i\Big)_+,\]
cf.~\citet{papstr16} Lemma~3.
 Moreover, we recover $\theta$ from $\tau$ via
\[
\theta(A) = \sum_{K \,:\, K \cap A \neq \emptyset} \tau(K),
\] 
which makes the analogy between extremal coefficient functions $\theta$ and capacity functionals of random sets even more explicit. 

However, there are two drawbacks with representing the Choquet model by the collection of coefficients $\tau(A)$, $A \subset \{1,\dots,d\}$, $A \neq \emptyset$.
First, this representation is specific to the dimension, in which the model is considered, that is, we cannot simply turn to a subset of these coefficients when considering marginal distributions. Second, one may easily forget that one has in fact not $2^d-1$ degrees of freedom among these coefficients, but $2^d-1-d$, since $\theta(\{i\})=1$ for singletons $\{i\}$, which is only encoded through $d$ linear constraints for $\tau$ as follows
\begin{align}\label{eq:tauMarginal}
    \sum_{K \,:\, i \in K} \tau(K) = 1, \qquad i=1,\dots,d.
\end{align}

A third parametrisation of the Choquet model, which has received little attention so far, but is very relevant for the ordering results in this article (cf.~Lemma~\ref{lemma:Choquet-LO-UO}) and does not have such drawbacks, is the following. 
Instead of extremal coefficients, let us consider the following \emph{tail dependence coefficients} for $A \subset \{1,\dots,d\}$, $A \neq \emptyset$:
\[
\chi(A) = \EE \min_{i \in A} Z_i = 
\int_{\Omega} \min_{i \in A} {f_i(\omega)}\; \nu(\mathrm{d}\omega)= \Lambda\Big( \big\{y \in [0,\infty)^d \,:\, \min_{i \in A}(y_i) > 1 \big\}\Big).
\]
Then it is easily seen that
\begin{align}\label{eq:chi-theta}
    \chi(A)=\sum_{I \subset A, \, I \neq \emptyset} (-1)^{|I|+1} \theta(I) \qquad \text{and} \qquad  \theta(A)=\sum_{I \subset A, \, I \neq \emptyset} (-1)^{|I|+1} \chi(I).
\end{align} 
Since $\theta(\emptyset)=0$, and with $a_1,\dots,a_n$ being the distinct elements of $A$, the first identity may also be expressed as
\begin{align}\label{eq:chi-nonnegative}
\chi(A) = - (\Delta_{a_1} \Delta_{a_2} \dots \Delta_{a_n} \theta)(\emptyset).
\end{align}
In particular $\chi(\{i\})=\theta(\{i\})=1$ for $i=1,\dots,d$, and these operations show explicitly, how $\theta$ and $\chi$ can be recovered from each other. While $\theta$ resembles a capacity functional, $\chi$ can be seen as an analog of an inclusion functional, since 
\begin{align}\label{eq:chi-from-tau}
  \chi(A) = \sum_{K \,:\, A \subset K} \tau(K),  
\end{align}
whereas
\[\tau(A)  = \sum_{K \,:\, A \subset K} (-1)^{|K \setminus A|}\chi(K) = (\Delta_{b_1} \Delta_{b_2} \dots \Delta_{b_m} \chi)( A ),\]
where $b_1,b_2,\dots,b_m$ are the distinct elements of $\{1,2,\dots,d\} \setminus A$.

To sum up, we may consider three different parametrizations for the Choquet model: 
\begin{enumerate}[(i)]
\itemsep0mm
    \item by the $2^d-1$ extremal coefficients $\theta(A)$, $A \in \mathcal{P}_d$, $A \neq \emptyset$, 
    \item by the $2^d-1$ tail dependence coefficients  $\chi(A)$, $A \in \mathcal{P}_d$, $A \neq \emptyset$, 
    \item by the $2^d-1$ mass coefficients  $\tau(A)$, $A \in \mathcal{P}_d$, $A \neq \emptyset$.
\end{enumerate}
For (i) and (ii) the constraint for standard unit Fr\'echet margins is encoded via 
$\chi(\{i\})=\theta(\{i\})=1$ for $i=1,\dots,d$. For (iii) it amounts to the $d$ conditions from \eqref{eq:tauMarginal}. Only (i) and (ii) do not depend on the dimension, in which the model is considered.

\section{Prerequisites from stochastic orderings}
\label{sec:order}

A wealth of stochastic orderings and associated inequalities have been summarised in \citet{mueller_02} and \citet{shsh_07}, the most fundamental order being the \emph{usual stochastic order} \[F\leq_{\mathrm{st}} G\] between two univariate distributions $F$ and $G$, which is defined as $F(x)\geq G(x)$ for all $x\in \RR$. This means that draws from $F$ are less likely to attain large values than draws from $G$.

For multivariate distributions definitions of orderings are less straightforward and there are many more notions of stochastic orderings. We will focus on upper orthants, lower orthants and the PQD order here.
A subset $U \subset \RR^d$ is called an \emph{upper orthant} if it is of the form 
 \[U=U_{{\bm a}}=\lbrace{\bm x} \in \RR^d \, :\,  x_1 > a_1,\dots,x_d > a_d\rbrace\]
 for some  ${\bm a}\in \RR^d$. 
   Similarly, a subset $L \subset \RR^d$ is called a \emph{lower orthant} if it is of the form 
 \[L=L_{{\bm a}}=\lbrace{\bm x} \in \RR^d \, :\,  x_1 \leq a_1,\dots,x_d \leq a_d\rbrace\]
 for some  ${\bm a}\in \RR^d$.

\begin{definition}[Multivariate orders LO, UO, PQD, \citet{shsh_07}, Sections 6.G and 9.A, \citet{mueller_02}, Sections 3.3. and 3.8]
\label{def:UO-LO-PQD}
$\phantom{a}$\\
Let ${\bm X},{\bm Y}\in\RR^d$ be two random vectors.  
\begin{itemize}
\itemsep0mm
    \item ${\bm X}$ is said to be \emph{smaller than ${\bm Y}$ in the upper orthant order}, denoted ${\bm X}\leq_{\mathrm{uo}}{\bm Y}$,\\ if
$\PP({\bm X}\in U)\leq\PP({\bm Y}\in U)$ for all upper orthants $U \subset \RR^d$. 
\item ${\bm X}$ is said to be \emph{smaller than ${\bm Y}$ in the lower orthant order}, denoted ${\bm X}\leq_{\mathrm{lo}}{\bm Y}$,\\
if $\PP({\bm X}\in L)\geq\PP({\bm Y}\in L)$ for all lower orthants $L \subset \RR^d$. 
\item ${\bm X}$ is said to be \emph{smaller than ${\bm Y}$ in the positive quadrant order }, denoted ${\bm X}\leq_{\mathrm{PQD}}{\bm Y}$,\\ if we have the relations ${\bm X}\leq_{\mathrm{uo}}{\bm Y}$ and ${\bm X}\geq_{\mathrm{lo}}{\bm Y}$.
\end{itemize}
\end{definition}

Note that the PQD order (also termed concordance order) is a dependence order. If ${\bm X}\leq_{\mathrm{PQD}}{\bm Y}$ holds, it implies that ${\bm X}$ and ${\bm Y}$ have identical univariate marginals.
Several equivalent characterizations of these orders are summarised in the respective sections of \citet{mueller_02} and \citet{shsh_07}. In relation to portfolio properties, it is interesting to note that for non-negative random vectors ${\bm X}, {\bm Y} \in [0,\infty)^d$
\begin{align}
\label{eq:UO-nonnegative-min}
    {\bm X}\leq_{\mathrm{uo}}{\bm Y} \quad \iff \quad \min_{i=1,\dots,d}(a_i X_i) \leq_{\mathrm{st}} \min_{i=1,\dots,d}(a_i Y_i) \quad \text{ for all }  {\bm a} \in (0,\infty)^d;\\
    \label{eq:LO-nonnegative-max}
      {\bm X}\leq_{\mathrm{lo}}{\bm Y} \quad \iff \quad \max_{i=1,\dots,d}(a_i X_i) \leq_{\mathrm{st}} \max_{i=1,\dots,d}(a_i Y_i) \quad \text{ for all }  {\bm a} \in (0,\infty)^d.
\end{align}
In addition, if ${\bm X}, {\bm Y} \in [0,\infty)^d$ and ${\bm X}\leq_{\mathrm{lo}}{\bm Y}$,  then 
\[\EE \, g\bigg(\sum_{i=1}^d a_iX_i\bigg) \leq \EE \, g\bigg(\sum_{i=1}^d a_iY_i\bigg),
\] 
for all ${\bm a}\in [0,\infty)^d$ and all \emph{Bernstein functions} $g$, provided that the expectation exists, cf.~\citet{shsh_07} 6.G.14 and 5.A.4 for this fact and Appendix~\ref{app:harmonic} for a definition of Bernstein functions.  In particular, such functions are non-negative, monotonously increasing and concave and therefore form a natural class of utility functions, see e.g.\ \citet{brockett_87} and \citet{caballe_96}.  Important examples of Bernstein functions include the identity function, $g(x)=\log(1+x)$ or $g(x)=(1+x)^\alpha - 1$ for $\alpha \in (0,1)$.

The multivariate orders from Definition~\ref{def:UO-LO-PQD} have several useful \textbf{closure properties}. We refer to \citet{mueller_02} Theorem 3.3.19 and Theorem 3.8.7 for a systematic collection, including
\begin{itemize}
\itemsep0mm
    \item independent or identical concatenation,
    \item marginalisation,
    \item distributional convergence,
    \item applying increasing transformations to the components,
    \item taking mixtures. 
\end{itemize}

In what follows, we will need a corresponding notion of multivariate  orders not only for probability measures on $\RR^d$, but also for exponent measures as introduced in Section~\ref{sec:maxstability}. While the support of an exponent measure $\Lambda$ is contained in $[0,\infty)^d\setminus \{{\bm 0}\}$, its total mass is infinite. We only know for sure that $\Lambda(B)$ is finite for Borel sets $B\subset \RR^d$ \emph{bounded away from the origin} in the sense that there exists $\varepsilon>0$, such that $B \cap L_{\varepsilon {\bm e}} = \emptyset$ (recall 
$ L_{\varepsilon {\bm e}} = \lbrace{\bm x} \in \RR^d \, :\,  x_1 \leq \varepsilon,\dots,x_d \leq \varepsilon\rbrace$). This means that we need to assume a different view on lower orthants and work with their complements instead, a subtlety, which did not matter previously when defining such notions for probability measures only.
The following notion seems natural in view of Definition~\ref{def:UO-LO-PQD} and the results of Section~\ref{sec:results}. Figure~\ref{fig:orthants} illustrates the restriction to fewer admissible test sets for these orders for exponent measures.

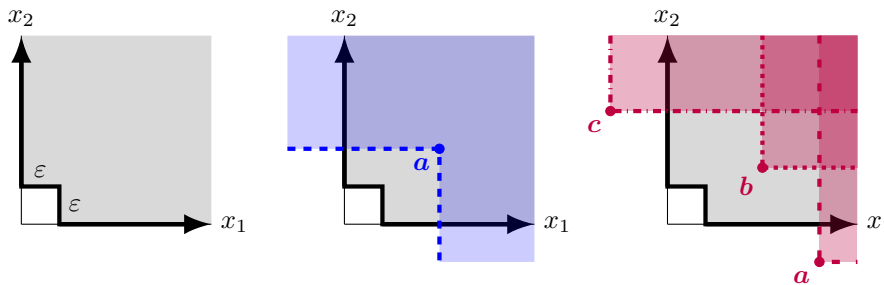
\begin{figure}[hbt]
    \centering
\begin{tikzpicture}[scale = 2.5, >=Latex]
    \coordinate (O) at (0,0);
    \coordinate (e1) at (1,0);
    \coordinate (e2) at (0,1);
    \coordinate (e12) at (1,1);
    \coordinate (e1m) at (0.9,0);
    \coordinate (e2m) at (0,0.9);
    \coordinate (eps1) at (0.2,0);
    \coordinate (eps2) at (0,0.2);
    \coordinate (eps12) at (0.2,0.2);
    \draw (O) to (e1) node[anchor=west]{$x_{1}$};
    \draw (O) to (e2) node[anchor=south]{$x_{2}$};
 \fill[opacity=.3,gray] (e2) -- (eps2) -- (eps12) -- (eps1) -- (e1) -- (e12);
    \draw[ultra thick, black] (e2m) -- (eps2) -- (eps12) -- (eps1) -- (e1m);
       \draw[ultra thick, black, ->] (eps1) to (e1);
    \draw[ultra thick, black, ->] (eps2) to (e2);
    \fill[opacity=0.2,blue] (1,1) -- (-0.3,1) -- (-0.3,0.4) -- (0.5,0.4) -- (0.5,-0.2) -- (1,-0.2);
     \draw[dashed,blue,ultra thick]  (-0.3,0.4) -- (0.5,0.4) -- (0.5,-0.2);
     \filldraw[blue] (0.5,0.4) circle[radius=0.7pt] node[anchor = north east] {${\bm a}$};
     \def\sh{1.7}
    \coordinate (O) at (0+\sh,0);
    \coordinate (e1) at (1+\sh,0);
    \coordinate (e2) at (0+\sh,1);
    \coordinate (e12) at (1+\sh,1);
    \coordinate (e1m) at (0.9+\sh,0);
    \coordinate (e2m) at (0+\sh,0.9);
    \coordinate (eps1) at (0.2+\sh,0);
    \coordinate (eps2) at (0+\sh,0.2);
    \coordinate (eps12) at (0.2+\sh,0.2);
    \draw (O) to (e1) node[anchor=west]{$x_{1}$};
    \draw (O) to (e2) node[anchor=south]{$x_{2}$};
 \fill[opacity=.3,gray] (e2) -- (eps2) -- (eps12) -- (eps1) -- (e1) -- (e12);
    \draw[ultra thick, black] (e2m) -- (eps2) -- (eps12) -- (eps1) -- (e1m);
       \draw[ultra thick, black, ->] (eps1) to (e1);
    \draw[ultra thick, black, ->] (eps2) to (e2);
    \fill[opacity=0.3,purple] (1+\sh,1) -- (-0.3+\sh,1) -- (-0.3+\sh,0.6) -- (1+\sh,0.6);
     \draw[dashdotted,purple,ultra thick] (-0.3+\sh,1) -- (-0.3+\sh,0.6) -- (1+\sh,0.6);
     \filldraw[purple] (-0.3+\sh,0.6) circle[radius=0.7pt] node[anchor = north east] {${\bm c}$};
    \fill[opacity=0.3,purple] (1+\sh,1) -- (0.8+\sh,1) -- (0.8+\sh,-0.2) -- (1+\sh,-0.2);
     \draw[loosely dashed,purple,ultra thick] (0.8+\sh,1) -- (0.8+\sh,-0.2) -- (1+\sh,-0.2); 
     \filldraw[purple] (0.8+\sh,-0.2) circle[radius=0.7pt] node[anchor = north east] {${\bm a}$};
    \fill[opacity=0.3,purple] (1+\sh,1) -- (0.5+\sh,1) -- (0.5+\sh,0.3) -- (1+\sh,0.3);
     \draw[dotted,purple,ultra thick] (0.5+\sh,1) -- (0.5+\sh,0.3) -- (1+\sh,0.3); 
     \filldraw[purple] (0.5+\sh,0.3) circle[radius=0.7pt] node[anchor = north east] {${\bm b}$};
     \def\sh{-1.7}
    \coordinate (O) at (0+\sh,0);
    \coordinate (e1) at (1+\sh,0);
    \coordinate (e2) at (0+\sh,1);
    \coordinate (e12) at (1+\sh,1);
    \coordinate (e1m) at (0.9+\sh,0);
    \coordinate (e2m) at (0+\sh,0.9);
    \coordinate (eps1) at (0.2+\sh,0);
    \coordinate (eps2) at (0+\sh,0.2);
    \coordinate (eps12) at (0.2+\sh,0.2);
    \draw (O) to (e1) node[anchor=west]{$x_{1}$};
    \draw (O) to (e2) node[anchor=south]{$x_{2}$};
 \fill[opacity=.3,gray] (e2) -- (eps2) -- (eps12) -- (eps1) -- (e1) -- (e12);
    \draw[ultra thick, black] (e2m) -- (eps2) -- (eps12) -- (eps1) -- (e1m);
       \draw[ultra thick, black, ->] (eps1) to (e1);
       \draw[opacity=0] (eps1) to node[midway,right,opacity=1]{$\varepsilon$} (eps12);
         \draw[opacity=0] (eps2) to node[midway,above,opacity=1]{$\varepsilon$} (eps12);
    \draw[ultra thick, black, ->] (eps2) to (e2);
\end{tikzpicture}    
    \caption{Illustration of test sets for multivariate orders for exponent measures in dimension $d=2$, cf.~Definition~\ref{def:orders-Lambda}. Left: $\Lambda$ is locally finite on the (closed) grey area for all $\varepsilon>0$, its total (infinite) mass is contained in the union of such sets; middle: admissible complement of a lower orthant $\RR^2\setminus L_{\bm a}$ (blue area) for testing lower orthant order for $\Lambda$; right: admissible upper orthants $U_{\bm a}$, $U_{\bm b}$, $U_{\bm c}$ (three red areas) for testing upper orthant order for $\Lambda$.}
    \label{fig:orthants}
\end{figure}

\begin{definition}[Multivariate orders for exponent measures]
\label{def:orders-Lambda}

Let $\Lambda,\widetilde \Lambda$ be two infinite measures on $\RR^d$ with mass contained in  $[0,\infty)^d\setminus \{{\bm 0}\}$ and taking finite values on Borel sets bounded away from the origin.
\begin{itemize}
\itemsep0mm
    \item $\Lambda$ is said to be \emph{smaller than $\widetilde \Lambda$ in the upper orthant order}, denoted $\Lambda \leq_{\mathrm{uo}}\widetilde \Lambda$,\\ if
$\Lambda(U)\leq \widetilde\Lambda(U)$ for each upper orthant $U \subset \RR^d$ that is bounded away from the origin. 
\item $\Lambda$ is said to be \emph{smaller than $\widetilde \Lambda$ in the lower orthant order}, denoted $\Lambda \leq_{\mathrm{lo}}\widetilde \Lambda$,\\
if $\Lambda(\RR^d \setminus L)\leq \widetilde \Lambda(\RR^d \setminus L)$ for all lower orthants $L \subset \RR^d$ such that $\RR^d \setminus L$ is bounded away from the origin. 
\item $\Lambda$ is said to be \emph{smaller than $\widetilde \Lambda$ in the positive quadrant order}, denoted $\Lambda \leq_{\mathrm{PQD}} \widetilde \Lambda$,\\ if we have the relations $\Lambda \leq_{\mathrm{uo}} \widetilde \Lambda$ and $\Lambda \geq_{\mathrm{lo}} \widetilde \Lambda$.
\end{itemize}
\end{definition}

\begin{remark}  Exponent measures $\Lambda$ and $\widetilde \Lambda$ are Radon measures on $[0,\infty]^d \setminus \{{\bm 0}\}$ (the one-point uncompactification of $[0,\infty]^d$). Any Borel set $B\subset [0,\infty]^d \setminus \{{\bm 0}\}$ bounded away from the origin is relatively compact in this space, hence  
$\Lambda(B)$ and  $\widetilde \Lambda(B)$, including  $\Lambda(U)$, $\widetilde \Lambda(U)$, $\Lambda(\RR^d \setminus L)$ and  $\widetilde \Lambda(\RR^d \setminus L)$ as above, are all finite.
\end{remark}

\section{Main results}\label{sec:results}

First we present some fundamental characterisations of LO, UO and PQD order among simple max-stable distributions  and their exponent measures, then we study these orders among the introduced parametric families.
While we focus on simple max-stable distributions in what follows, we would like to stress that applying componentwise identical isotonic transformations to random vectors preserves orthant and concordance orders; in this sense the following properties can be seen as statements about the respective copulas. In particular, among max-stable random vectors, it suffices to establish these orders among simple max-stable random vectors and they translate immediately to all counterparts with different marginal distributions, cf.~\eqref{eq:marginaltransformation}.

\subsection{Fundamental results}
\label{sec:fundamental-results}

We start by assembling the most fundamental relations for multivariate orders among simple max-stable random vectors.
While the statements about lower orthant orders are almost immediate from existing theory and definitions, the relations for upper orthants are a bit more intricate and non-standard in the area. In particular, showing that 	``$\Lambda \leq_{\mathrm{uo}} \widetilde \Lambda$ implies $G \leq_{\mathrm{uo}}  \widetilde G$''   turns out to be non-trivial. 
The key ingredient 
in the proof of the following theorem (cf.~Appendix~\ref{app:proofs-fundamentals}) is Proposition~\ref{prop:key-OO-fundamentals} for part b). 

\begin{theorem}[Orthant orders characterisations] 
\label{thm:OO-characterization}
Let $G$ and $\widetilde G$ be $d$-variate simple max-stable distributions with exponent measures $\Lambda$ and $\widetilde \Lambda$, generators ${\bm Z}$ and $\widetilde {\bm Z}$, stable tail dependence functions $\ell$ and $\widetilde \ell$ and max-zonoids $K$ and $\widetilde K$, respectively. 
\begin{enumerate}[a)]
    \item The following statements are equivalent.
    \begin{enumerate}[(i)]
        \itemsep0mm
        \item $G\leq_{\mathrm{lo}} \widetilde G$;
         \item $\Lambda\leq_{\mathrm{lo}} \widetilde \Lambda$;
         \item $\EE(\max_{i=1,\dots,d}(a_i Z_i)) \leq \EE(\max_{i=1,\dots,d}(a_i \widetilde Z_i))$ for all ${\bm a} \in (0,\infty)^d$;
         \item $\ell \leq \widetilde \ell$;
         \item $K \subset \widetilde K$.
    \end{enumerate}
    \item The following statements are equivalent.
       \begin{enumerate}[(i)]
        \itemsep0mm
        \item $G\leq_{\mathrm{uo}} \widetilde G$;
         \item $\Lambda\leq_{\mathrm{uo}} \widetilde \Lambda$; 
           \item $\EE(\min_{i \in A}(a_i Z_i)) \leq \EE(\min_{i \in A}(a_i \widetilde Z_i))$ for all ${\bm a} \in (0,\infty)^d$ and $A \subset \{1,\dots,d\}$, $A \neq \emptyset$. 
    \end{enumerate}
    \item If $d=2$, the following statements are equivalent.
    \begin{enumerate}[(i)]
        \itemsep0mm
        \item $G\leq_{\mathrm{PQD}} \widetilde G$;
        \item $G\leq_{\mathrm{uo}} \widetilde G$;
         \item $G\geq_{\mathrm{lo}} \widetilde G$.
    \end{enumerate}
\end{enumerate}
\end{theorem}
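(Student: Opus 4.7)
The plan is to handle parts (a), (b), and (c) in turn, with the real work concentrated in part~(b).

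For part~(a), the strategy will be a standard cycle of translations. Writing $G(\bm x) = \exp(-\ell(1/x_1,\dots,1/x_d))$ via Theorem/Definition~\ref{td:representations}, the inequality $G \geq \widetilde G$ on $(0,\infty)^d$ (i.e.\ $G \leq_{\mathrm{lo}} \widetilde G$) is equivalent to $\ell \leq \widetilde \ell$, giving (i)$\Leftrightarrow$(iv). For (iv)$\Leftrightarrow$(iii) one uses $\ell(\bm a) = \EE\max_i(a_i Z_i)$; for (iv)$\Leftrightarrow$(v) the support-function characterisation in~\eqref{eq:ell-from-K}--\eqref{eq:K-from-ell}. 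Using $V(\bm x) = \Lambda(\RR^d \setminus L_{\bm x})$ from Theorem/Definition~\ref{td:representations}(ii), one gets (ii)$\Leftrightarrow$(iv) directly on $(0,\infty)^d$; the extension to all lower orthants whose complement is bounded away from the origin will follow by monotone approximation, exploiting the identical marginals $\ell(\bm e_i)=\widetilde\ell(\bm e_i)=1$.

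For part~(b), I will proceed via (i)$\Rightarrow$(iii)$\Leftrightarrow$(ii)$\Rightarrow$(i). For (i)$\Rightarrow$(iii), marginalisation closure of $\leq_{\mathrm{uo}}$ gives $\bm X_A \leq_{\mathrm{uo}} \widetilde{\bm X}_A$, so~\eqref{eq:UO-nonnegative-min} yields $\min_{i\in A}(a_iX_i) \leq_{\mathrm{st}} \min_{i\in A}(a_i \widetilde X_i)$, and these non-negative variables have ordered expectations. For (iii)$\Leftrightarrow$(ii), I will use the identity
\[
\EE\min_{i\in A}(a_i Z_i) \;=\; \Lambda\bigl(\{y \in [0,\infty)^d \,:\, y_i > 1/a_i,\,\forall i\in A\}\bigr),
\]
obtained via the polar decomposition of $\Lambda$ with its $r^{-2}\,dr$ radial factor; as $\bm a$ and $A$ vary, the sets on the right exhaust (up to a harmless approximation letting unconstrained coordinates $a_j\to -\infty$) precisely those upper orthants of $\RR^d$ bounded away from the origin.

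The main obstacle will be (ii)$\Rightarrow$(i). My plan is to express the survival function by inclusion--exclusion on the events $\{X_i \leq a_i\}$,
\[
\overline G(\bm a) \;=\; \sum_{A \subset \{1,\dots,d\}} (-1)^{|A|} \exp\bigl(-V_A(\bm a_A)\bigr),
\]
and to further expand each $V_A(\bm a_A) = \sum_{\emptyset \neq B \subset A}(-1)^{|B|+1}\chi_\Lambda^{(B)}(\bm a_B)$ in terms of the upper-orthant masses $\chi_\Lambda^{(B)}(\bm a_B) := \Lambda(\{y \in [0,\infty)^d \,:\, y_i > a_i,\,\forall i\in B\})$. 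Assumption (ii) is then equivalent to $\chi_\Lambda^{(B)} \leq \chi_{\widetilde\Lambda}^{(B)}$ pointwise for every non-empty $B$. I will then interpolate via $\Lambda_t = (1-t)\Lambda + t\widetilde\Lambda$, $t \in [0,1]$ (still a valid exponent measure, with each $\chi_{\Lambda_t}^{(B)}$ linear and non-decreasing in $t$), and check that each partial derivative $\partial_{\chi^{(B)}}\overline G$ is non-negative; the chain rule in $t$ then delivers $\overline G_\Lambda(\bm a) \leq \overline G_{\widetilde\Lambda}(\bm a)$. The sign cancellations work cleanly in low dimensions (for instance in $d=3$, $\partial_{\chi_{12}}\overline G = e^{-V_{12}}(1-e^{V_{12}-V_{123}}) \geq 0$ by monotonicity of $A \mapsto V_A(\bm a_A)$), and I expect the general non-negativity statement to be the content of Proposition~\ref{prop:key-OO-fundamentals}. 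For part~(c), in $d=2$ the standard unit Fr\'echet margins fix the difference $\overline G(a_1,a_2) - G(a_1,a_2) = 1 - e^{-1/a_1} - e^{-1/a_2}$ independently of the dependence structure; hence $G \leq_{\mathrm{uo}} \widetilde G$ (pointwise ordering of $\overline G$) coincides with $G \geq_{\mathrm{lo}} \widetilde G$ (pointwise ordering of $G$), and since $\leq_{\mathrm{PQD}}$ is the conjunction of the two, all three statements are equivalent.
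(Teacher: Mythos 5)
Your overall architecture is sound: part a) and part c) proceed exactly as in the paper (your observation that $\overline G(a_1,a_2)-G(a_1,a_2)=1-e^{-1/a_1}-e^{-1/a_2}$ is independent of the dependence structure is the same cancellation the paper phrases as $\EE\min(a_1Z_1,a_2Z_2)+\EE\max(a_1Z_1,a_2Z_2)=a_1+a_2$), and your (ii)$\Leftrightarrow$(iii) in part b) coincides with the paper's \eqref{eq:U-minZ} plus the approximation for orthants with zero coordinates. Two points deserve attention.

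There is a genuine gap in your step (i)$\Rightarrow$(iii). From ${\bm X}_A\leq_{\mathrm{uo}}\widetilde{\bm X}_A$ and \eqref{eq:UO-nonnegative-min} you correctly obtain $\min_{i\in A}(a_iX_i)\leq_{\mathrm{st}}\min_{i\in A}(a_i\widetilde X_i)$, but ``ordered expectations'' of these variables is not statement (iii): statement (iii) concerns the generators ${\bm Z},\widetilde{\bm Z}$, not ${\bm X},\widetilde{\bm X}$, and in fact $\PP(\min_{i\in A}(a_iX_i)>t)\sim \EE(\min_{i\in A}(a_iZ_i))/t$ as $t\to\infty$, so $\EE\min_{i\in A}(a_iX_i)=\infty$ whenever that tail constant is positive. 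What the stochastic ordering of the minima actually yields is the ordering of the \emph{tail constants} $\lim_{t\to\infty}t\,\PP(\min_{i\in A}(a_iX_i)>t)=\EE(\min_{i\in A}(a_iZ_i))$; this identity is precisely the domain-of-attraction relation $\Lambda(U_{\bm x})=\lim_n n\PP({\bm X}\in nU_{\bm x})$ that the paper invokes, and once you insert it your step closes.

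Your treatment of the hard implication (ii)$\Rightarrow$(i) is genuinely different from the paper's and, once completed, correct and arguably more elementary. The paper discretises: it moves from $\chi$ to $\widetilde\chi$ along a chain changing one coordinate at a time inside the cone of union-completely alternating set functions, and the key inequality (Proposition~\ref{prop:key-OO-fundamentals}) is established with Bernstein-function and complete-alternation machinery (Corollary~\ref{cor:augmented-inequality}). You interpolate continuously via $\Lambda_t=(1-t)\Lambda+t\widetilde\Lambda$ and differentiate. Note, however, that the ``general non-negativity statement'' you defer to is \emph{not} the content of Proposition~\ref{prop:key-OO-fundamentals}; fortunately your route does not need that proposition at all, because at every point of the path
\[
\frac{\partial\overline G_t}{\partial\chi^{(B)}}({\bm a})
=\sum_{J\subset\{1,\dots,d\}\setminus B}(-1)^{|J|}\exp\big(-V_{t,B\cup J}({\bm a}_{B\cup J})\big)
=\PP\big(X^{(t)}_i\le a_i\ \forall i\in B,\ X^{(t)}_j>a_j\ \forall j\notin B\big)\ \ge\ 0
\]
by inclusion--exclusion; this is legitimate because $\Lambda_t$ is itself a valid exponent measure (non-negative, $(-1)$-homogeneous, correct marginal normalisation), so the $V_{t,A}$ are exponent functions of a genuine simple max-stable law $G_t$. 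The chain rule together with the monotonicity of $t\mapsto\chi^{(B)}_{\Lambda_t}$ then gives $\overline G\le\overline{\widetilde G}$ directly. What your route buys is a short, purely probabilistic proof of the key step; what the paper's route buys is the standalone Proposition~\ref{prop:key-OO-fundamentals}, stated for arbitrary union-completely alternating set functions and arbitrary Bernstein functions, which the authors flag as being of independent interest.
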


The assumption $d=2$ is important in part c); these equivalences are no longer true in higher dimensions, cf.~Example~\ref{example:UO-LO-not-equivalent} below. 
Theorem~\ref{thm:OO-characterization} implies further that the orthant ordering of two generators ${\bm Z}$ and ${\widetilde {\bm Z}}$  implies the respective ordering of the corresponding distributions ${G}$ and ${\widetilde G}$ and exponent measures ${\Lambda}$ and ${\widetilde \Lambda}$. However, the converse is false and most generators will not satisfy orthant orderings, even when the corresponding distributions do. An interesting case for this phenomenon is the H\"usler-Rei{\ss} family, cf.~Example~\ref{example:HR-generators-not-ordered} below. 
The following corollary is another immediate consequence of Theorem~\ref{thm:OO-characterization}.

\begin{corollary}[PQD/concordance order characterisation]
\label{cor:PQD}
Let $G$ and $\widetilde G$ be $d$-variate simple max-stable distributions with exponent measures $\Lambda$ and $\widetilde \Lambda$, then
\[G\leq_{\mathrm{PQD}} \widetilde G 
\quad \iff \quad 
\Lambda\leq_{\mathrm{PQD}} \widetilde \Lambda.
\]
\end{corollary}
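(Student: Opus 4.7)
The plan is to obtain the corollary directly from Theorem~\ref{thm:OO-characterization} by unpacking the definition of PQD order in both settings. Recall from Definition~\ref{def:UO-LO-PQD} that $G \leq_{\mathrm{PQD}} \widetilde G$ is by definition the conjunction of $G \leq_{\mathrm{uo}} \widetilde G$ and $G \geq_{\mathrm{lo}} \widetilde G$, and by Definition~\ref{def:orders-Lambda} the exponent-measure counterpart $\Lambda \leq_{\mathrm{PQD}} \widetilde \Lambda$ is similarly the conjunction of $\Lambda \leq_{\mathrm{uo}} \widetilde \Lambda$ and $\Lambda \geq_{\mathrm{lo}} \widetilde \Lambda$. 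So it suffices to establish each of the two component equivalences separately.

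First I would apply Theorem~\ref{thm:OO-characterization}(a), namely the equivalence (i)$\Leftrightarrow$(ii), to obtain $G \leq_{\mathrm{lo}} \widetilde G \iff \Lambda \leq_{\mathrm{lo}} \widetilde \Lambda$. The symmetric version, obtained by swapping the roles of $G,\widetilde G$ (respectively $\Lambda,\widetilde \Lambda$), then gives $G \geq_{\mathrm{lo}} \widetilde G \iff \Lambda \geq_{\mathrm{lo}} \widetilde \Lambda$. Second, I would apply Theorem~\ref{thm:OO-characterization}(b), namely the equivalence (i)$\Leftrightarrow$(ii), to obtain $G \leq_{\mathrm{uo}} \widetilde G \iff \Lambda \leq_{\mathrm{uo}} \widetilde \Lambda$. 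Conjoining the two equivalences yields the claim.

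There is essentially no obstacle here: all of the substantive work has already been done inside Theorem~\ref{thm:OO-characterization}, in particular the non-trivial implication ``$\Lambda \leq_{\mathrm{uo}} \widetilde \Lambda$ implies $G \leq_{\mathrm{uo}} \widetilde G$'' which, as the authors emphasise, rests on Proposition~\ref{prop:key-OO-fundamentals}. The only mild point worth noting for clarity is that the upper and lower orthant pieces of the definition can be decoupled and handled independently, which is immediate since each is a universal statement over a class of test sets.
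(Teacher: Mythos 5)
Your proposal is correct and matches the paper's treatment: the paper states Corollary~\ref{cor:PQD} as an immediate consequence of Theorem~\ref{thm:OO-characterization}, obtained exactly by decoupling the PQD order into its upper-orthant and (reversed) lower-orthant components and applying the equivalences (i)$\Leftrightarrow$(ii) of parts a) and b). No gaps.
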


 It is well-known that for any stable tail dependence function $\ell$ of a simple max-stable random vector 
\begin{align}\label{eq:trivialLO}
\ell_{\mathrm{dep}}({\bm x}) = \lVert {{\bm x}}\rVert_\infty \leq \ell({\bm x}) \leq \lVert {{\bm x}}\rVert_1=\ell_{\mathrm{indep}}({\bm x}),    \qquad {\bm x} \geq {\bm 0},
\end{align}
where $\ell_{\mathrm{dep}}$ represents the degenerate max-stable random vector, whose components are fully dependent, and $\ell_{\mathrm{indep}}$ corresponds to the max-stable random vector with completely independent components. 
From the perspective of stochastic orderings this means that every max-stable random vector is dominated by the fully independent model, while it dominates the fully dependent model with respect to the lower orthant order. It seems less well-known that the converse ordering holds true for upper orthants, so that we arrive at the following corollary.

\begin{corollary}[PQD/concordance for independent and fully dependent model] \label{cor:PQD-boundaries}
    Let $G_{\mathrm{indep}}$, $G_{\mathrm{dep}}$ and $G$ be $d$-dimensional simple max-stable distributions, where  $G_{\mathrm{indep}}$ represents the model with fully independent components, and $G_{\mathrm{dep}}$ represents the model with fully dependent components. Then
    \[G_{\mathrm{indep}} \, \leq_{\mathrm{PQD}}\,  G \,\leq_{\mathrm{PQD}}\, G_{\mathrm{dep}}.
\]
\end{corollary}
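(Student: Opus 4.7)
The plan is to split the corollary into its two constituent orderings (LO and UO) and invoke the characterisations provided by Theorem~\ref{thm:OO-characterization}, then glue them together via Definition~\ref{def:UO-LO-PQD}.

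For the LO half, I would use Theorem~\ref{thm:OO-characterization}(a), which among simple max-stable distributions reduces $\leq_{\mathrm{lo}}$ to pointwise comparison of stable tail dependence functions. The well-known envelope \eqref{eq:trivialLO}, $\lVert {\bm x}\rVert_\infty \leq \ell({\bm x}) \leq \lVert {\bm x}\rVert_1$, identifies $\ell_{\mathrm{dep}}$ and $\ell_{\mathrm{indep}}$ as the pointwise minimum and maximum in the class of stable tail dependence functions. This gives $G_{\mathrm{dep}} \leq_{\mathrm{lo}} G \leq_{\mathrm{lo}} G_{\mathrm{indep}}$, which is precisely the LO chain required by PQD.

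For the UO half, I would invoke the characterisation Theorem~\ref{thm:OO-characterization}(b)(iii) via generators. I would pick convenient generators for the two extremes: $\widetilde{\bm Z}^{\mathrm{dep}}\equiv(1,\dots,1)$, and $\widetilde{\bm Z}^{\mathrm{indep}} = d\,{\bm e}_J$ with $J$ uniform on $\{1,\dots,d\}$, both of which satisfy $\EE(\widetilde Z_i)=1$ and reproduce the correct exponent function via $\EE\max_i(\widetilde Z_i/x_i)$. For singleton $A=\{i\}$, the moment constraint forces $\EE(a_iZ_i)=a_i$ for every generator, so the three models agree. For $|A|\geq 2$, the independent generator has at most one non-zero coordinate, so $\min_{i\in A}(a_i \widetilde Z^{\mathrm{indep}}_i)=0$ almost surely, while the fully dependent generator gives $\min_{i\in A}(a_i \widetilde Z^{\mathrm{dep}}_i)=\min_{i\in A}a_i$. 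For an arbitrary generator ${\bm Z}$ of $G$, the deterministic bound $\min_{i\in A}(a_iZ_i)\leq a_{i_0}Z_{i_0}$ for any chosen $i_0\in A$, followed by taking expectations and minimising over $i_0$, sandwiches $\EE\min_{i\in A}(a_iZ_i)$ in $[0,\min_{i\in A}a_i]$. Combined with Theorem~\ref{thm:OO-characterization}(b) this delivers $G_{\mathrm{indep}}\leq_{\mathrm{uo}} G\leq_{\mathrm{uo}} G_{\mathrm{dep}}$.

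Assembling the LO and UO chains in the opposing directions prescribed by Definition~\ref{def:UO-LO-PQD} yields $G_{\mathrm{indep}}\leq_{\mathrm{PQD}} G\leq_{\mathrm{PQD}} G_{\mathrm{dep}}$. The main obstacle is not in this corollary itself but has already been absorbed into Theorem~\ref{thm:OO-characterization}(b): without that generator-level characterisation of UO order for simple max-stable distributions, establishing $G_{\mathrm{indep}}\leq_{\mathrm{uo}} G$ directly from multivariate survival functions would be considerably less transparent. Once that characterisation is in hand, the extremal role of the independent and fully dependent models for the functional ${\bm a}\mapsto\EE\min_{i\in A}(a_iZ_i)$ is elementary, and the corollary follows.
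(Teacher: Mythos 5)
Your proposal is correct and follows essentially the same route as the paper's proof: the LO chain via \eqref{eq:trivialLO} and Theorem~\ref{thm:OO-characterization}(a), and the UO chain by sandwiching $\EE\min_{i\in A}(a_iZ_i)$ between $0$ and $\min_{i\in A}a_i$ (the latter from $\EE\min\leq\min\EE$ and marginal standardisation) with the bounds attained by exactly the generators ${\bm e}$ and $d{\bm e}_J$ that the paper uses. No gaps.
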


Similarly Theorem~\ref{thm:choquet} can be strengthened as follows. Whilst previously only the lower orthant order was known, we have in fact PQD/concordance ordering.

\begin{corollary}[PQD/concordance for the associated Choquet model] \label{cor:PQD-associated-Choquet}
    Let ${\bm X}$ be a simple max-stable random vector with extremal coefficients $(\theta(A))$, $A \subset \{1,\dots,d\}$, $A \neq \emptyset$ and ${\bm X}^*$ the Choquet (Tawn-Molchanov) random vector with identical extremal coefficients. Then
     \[{\bm X}^* \, \leq_{\mathrm{PQD}}\,  {\bm X}.
\]
\end{corollary}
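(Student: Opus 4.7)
My plan is to verify the two halves of PQD separately, using the equivalences from Theorem~\ref{thm:OO-characterization}. The lower-orthant half ${\bm X}^* \geq_{\mathrm{lo}} {\bm X}$ is essentially immediate: by Theorem~\ref{thm:choquet}c) we have $\ell \leq \ell^*$, and the implication (iv)$\Rightarrow$(i) in Theorem~\ref{thm:OO-characterization}a) translates this into the desired comparison between $G$ and $G^*$. This is exactly the content already attributed to Theorem~\ref{thm:choquet} in the paragraph preceding the corollary.

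For the upper-orthant half ${\bm X}^* \leq_{\mathrm{uo}} {\bm X}$, the plan is to exploit the equivalence (i)$\Leftrightarrow$(iii) of Theorem~\ref{thm:OO-characterization}b) and verify
\[
\EE \min_{i \in A}(a_i Z_i^*) \;\leq\; \EE \min_{i \in A}(a_i Z_i) \qquad \text{for all } {\bm a} \in (0,\infty)^d, \, \emptyset \neq A \subset \{1,\dots,d\}.
\]
On the Choquet side I would use the discrete generator ${\bm Z}^* = S \cdot {\bm e_J}$, where $S = \sum_K \tau(K)$ and $J$ is a random non-empty subset of $\{1,\dots,d\}$ with $\PP(J = K) = \tau(K)/S$; this is consistent with the spectral representation~\eqref{eq:Choquet-spectral} and satisfies $\EE Z_i^* = \sum_{K \ni i} \tau(K) = 1$ by the marginal constraints \eqref{eq:tauMarginal}. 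On the event $\{A \subset J\}$ the weighted minimum equals $S \cdot \min_{i \in A}(a_i)$, and it vanishes otherwise, so \eqref{eq:chi-from-tau} yields
\[
\EE \min_{i \in A}(a_i Z_i^*) \;=\; \min_{i \in A}(a_i) \cdot \sum_{K \supset A} \tau(K) \;=\; \min_{i \in A}(a_i) \cdot \chi^*(A).
\]
On the ${\bm X}$ side, the trivial scaling bound $a_i Z_i \geq (\min_{j \in A} a_j)\, Z_i$ (valid since $Z_i \geq 0$) gives $\EE \min_{i \in A}(a_i Z_i) \geq \min_{i \in A}(a_i) \cdot \chi(A)$ after taking the minimum over $i \in A$ and expectations. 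Since \eqref{eq:chi-theta} expresses $\chi$ as an inclusion--exclusion sum of extremal coefficients, the hypothesis $\theta = \theta^*$ forces $\chi = \chi^*$, and combining the two bounds yields the desired inequality.

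Concatenating the two halves gives ${\bm X}^* \leq_{\mathrm{PQD}} {\bm X}$. I do not anticipate a genuine technical obstacle here: once Theorem~\ref{thm:OO-characterization}b)(iii) is invoked, the key conceptual point is just that the discrete Choquet generator collapses the weighted minimum to the product $\min_{i \in A}(a_i) \cdot \chi(A)$, and that this product is simultaneously a universal lower bound for the corresponding expectation under any max-stable model sharing the same extremal coefficients.
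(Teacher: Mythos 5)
Your proposal is correct and follows essentially the same route as the paper's own proof: the lower-orthant half via $\ell \leq \ell^*$, and the upper-orthant half via Theorem~\ref{thm:OO-characterization}b)(iii), showing that $\min_{i \in A}(a_i)\cdot\chi(A)$ is a universal lower bound for $\EE\min_{i\in A}(a_i Z_i)$ which the discrete Choquet generator attains exactly. The only cosmetic difference is that you make the Choquet generator ${\bm Z}^* = S\,{\bm e}_J$ explicit as a random vector, whereas the paper computes the same expectation directly from the spectral measure~\eqref{eq:Choquet-spectral}.
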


\subsection{Parametric models}
\label{sec:parametric-results}

In general, parametric families of multivariate distributions do not necessarily exhibit stochastic orderings. 
One of the few more interesting known examples among multivariate max-stable distributions is the Dirichlet family, for which it has been shown that it is ordered in the symmetric case \citep[Proposition 4.4]{afz_15}, that is, for $\alpha \leq \beta$ we have 
\begin{align}
\label{eq:MaxDirSymmetricOrdering}
\mathrm{MaxDir}((\alpha,\alpha,\dots,\alpha)) \geq_{\mathrm{lo}} \mathrm{MaxDir}((\beta,\beta,\dots,\beta)).
\end{align}
 Figure~\ref{fig:Dirichletdepsets} illustrates \eqref{eq:MaxDirSymmetricOrdering} in the bivariate situation and shows a bivariate example that these distributions are otherwise not necessarily ordered in the asymmetric case.

\begin{figure}[htb]
    \centering
    \includegraphics[width=0.3\textwidth]{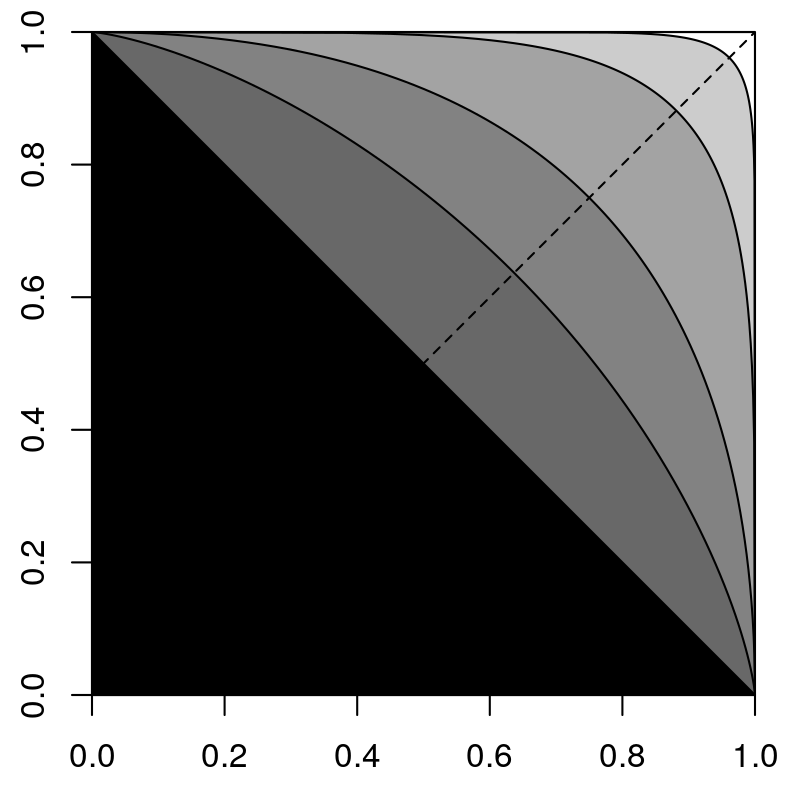}
    \includegraphics[width=0.3\textwidth]{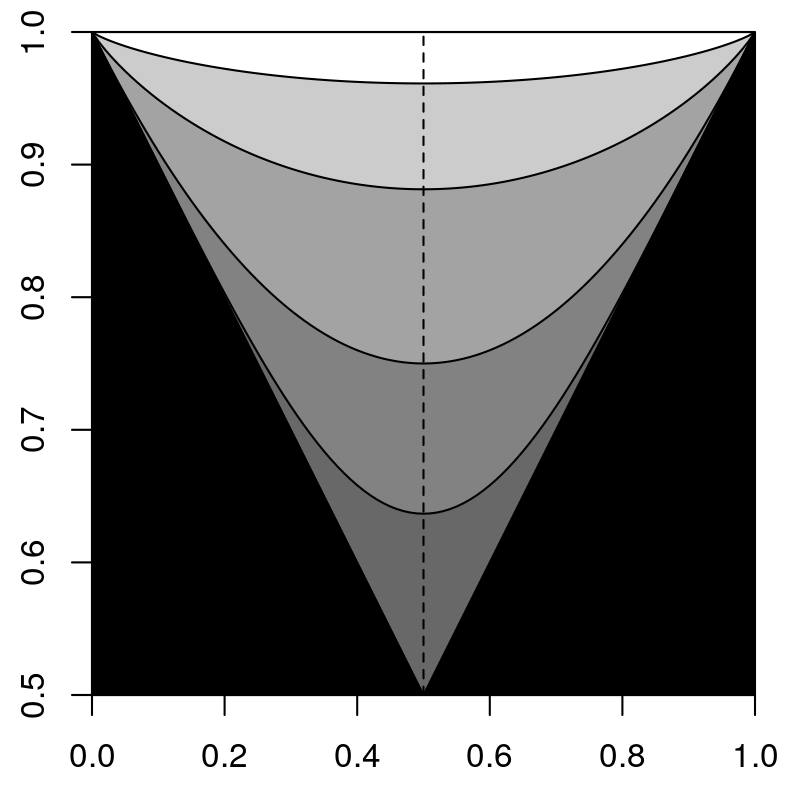}
    \\
    \includegraphics[width=0.3\textwidth]{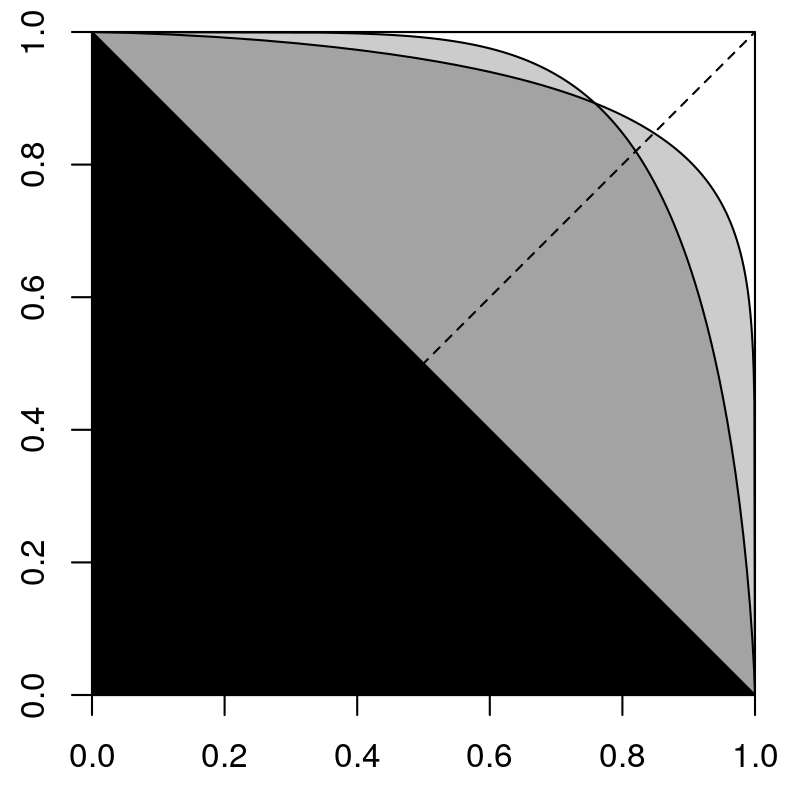}
    \includegraphics[width=0.3\textwidth]{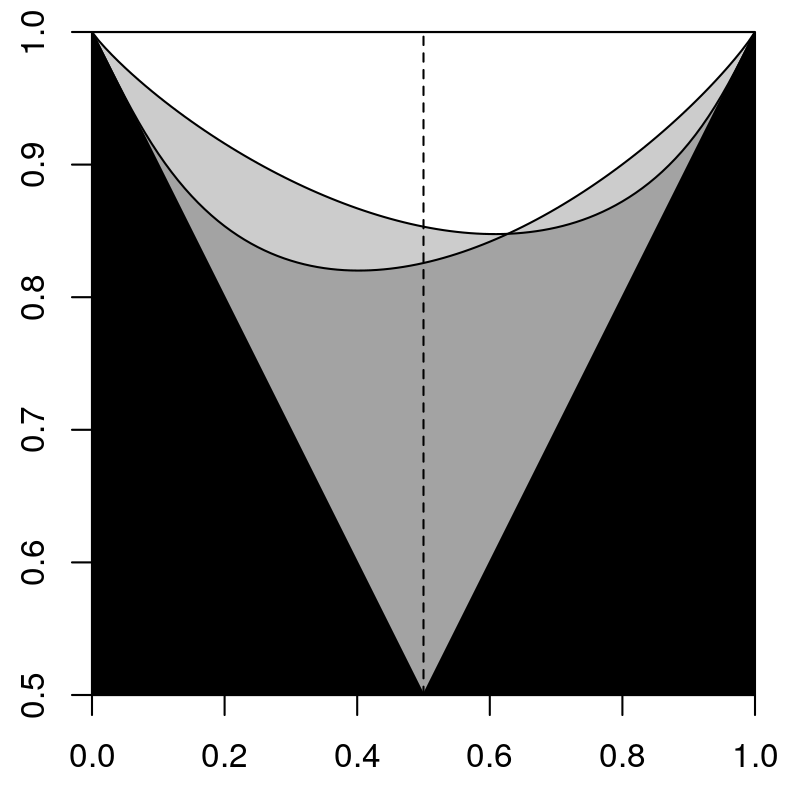}
    \caption{\small Top: Nested max-zonoids (left) and ordered (hypographs of) Pickands dependence functions (right) from the fully symmetric Dirichlet family for $\alpha \in \lbrace 0.0625, 0.25, 1, 4\rbrace$. 
    Smaller values of $\alpha$ correspond to larger sets and larger Pickands dependence functions and are closer to the independence model represented by the box $[0,1]^2$ or the constant function, which is identically 1. The fully dependent model is represented in black.
    Bottom: Non-nested max-zonoids and non-ordered Pickands dependence function from the asymmetric Dirichlet family for $ (\alpha_1,\alpha_2) \in \{(0.15,12),(4,0.2)\}$.}
    \label{fig:Dirichletdepsets}
\end{figure}

Here, we extend \eqref{eq:MaxDirSymmetricOrdering} in several ways: (i) going beyond the symmetric situation considering the fully asymmetric model, (ii) considering PQD/concordance order, (iii) shortening the proof by exploiting a connection to the theory of majorisation, cf.~Appendix~\ref{app:DirichletHR}. Figure~\ref{fig:AsymDirichletdepsets} provides an illustration of the stochastic ordering for the asymmetric Dirichlet family in the bivariate case. In Figure~\ref{fig:DirichletAngularDensities} we see how the mass of the angular measure of the symmetric and asymmetric Dirichlet model is more concentrated from left plot to right plot. This also corresponds to their stochastic ordering, with the right one being the most dominant model in terms of PQD order.

\begin{theorem}[PQD/concordance order of Dirichlet family]
\label{thm:Dirichlet-PQD}
Consider the max-stable Dirichlet family from Theorem/Definition~\ref{thmdef:Dirichlet}.
If ${\alpha}_i \leq {\beta}_i$, $i=1,\dots,d$, then \[\mathrm{MaxDir}({\alpha_1,\alpha_2,\dots,\alpha_d})  \leq_{\mathrm{PQD}} \mathrm{MaxDir}({\beta_1,\beta_2,\dots,\beta_d}).\]
\end{theorem}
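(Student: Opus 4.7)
The plan is to apply the generator characterisations of Theorem~\ref{thm:OO-characterization}. I will work with the Gamma generator ${\bm Z}^{(\bm\alpha)} = (\Gamma_1/\alpha_1,\dots,\Gamma_d/\alpha_d)$ from Theorem/Definition~\ref{thmdef:Dirichlet}(i), with independent $\Gamma_i \sim \Gamma(\alpha_i)$. In these terms, $\mathrm{MaxDir}({\bm\alpha}) \geq_{\mathrm{lo}} \mathrm{MaxDir}({\bm\beta})$ translates via Theorem~\ref{thm:OO-characterization}(a)(iii) to $\EE \max_i(a_i Z_i^{(\bm\alpha)}) \geq \EE \max_i(a_i Z_i^{(\bm\beta)})$ for all ${\bm a} \in (0,\infty)^d$, while $\mathrm{MaxDir}({\bm\alpha}) \leq_{\mathrm{uo}} \mathrm{MaxDir}({\bm\beta})$ translates via Theorem~\ref{thm:OO-characterization}(b)(iii) to $\EE \min_{i\in A}(a_i Z_i^{(\bm\alpha)}) \leq \EE \min_{i\in A}(a_i Z_i^{(\bm\beta)})$ for all nonempty $A \subset \{1,\dots,d\}$. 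Since PQD is transitive, I will telescope along coordinates and reduce everything to the single-coordinate change $\alpha_1 < \beta_1$, $\alpha_i = \beta_i$ for $i\geq 2$. The independence of the Gamma components then makes the tail $(Z_2,\dots,Z_d)$ literally shared between the two sides.

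Conditioning on $(Z_2,\dots,Z_d)$, I write $U = \Gamma(\alpha_1)/\alpha_1$ and $V = \Gamma(\beta_1)/\beta_1$ for the two candidates in the first coordinate. The identities $\max(x,y) = x + (y-x)_+$ and $\min(x,y) = x - (x-y)_+$, combined with $\EE U = \EE V = 1$, let me rewrite the lower orthant inequality for a fixed ${\bm a}$ as $\EE\phi(U) \geq \EE\phi(V)$, where $\phi(u) = \EE[(M - a_1 u)_+ \mid Z_2,\dots,Z_d]$ and $M = \max_{i \geq 2} a_i Z_i$. Similarly, for $A\ni 1$ the upper orthant inequality becomes $\EE\psi(U) \geq \EE\psi(V)$ with $\psi(u) = \EE[(a_1 u - m)_+ \mid Z_2,\dots,Z_d]$ and $m = \min_{i \in A \setminus \{1\}} a_i Z_i$ (setting $m = +\infty$ if $A = \{1\}$); for $A \not\ni 1$ both sides already coincide because the marginal in question does not involve the first coordinate. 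Both $\phi$ and $\psi$ are convex in $u$ for every realisation of the conditioning variables, so the entire statement collapses to the single convex-order relation $V \leq_{\mathrm{cx}} U$.

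For this convex order I will use the Beta--Gamma decomposition $\Gamma(\alpha_1) \overset{d}{=} B \cdot \Gamma(\beta_1)$, where $B \sim \mathrm{Beta}(\alpha_1, \beta_1-\alpha_1)$ is independent of $\Gamma(\beta_1)$. Setting $C = (\beta_1/\alpha_1) B$ produces the representation $U \overset{d}{=} C V$ with $C$ independent of $V$ and $\EE C = 1$, so that $\EE[U \mid V] = V$; a single application of Jensen's inequality then yields $\EE\phi(V) \leq \EE\phi(U)$ for every convex $\phi$, which is the desired $V \leq_{\mathrm{cx}} U$. The hard part will be the middle step: one has to correctly identify the convex integrand hidden inside the upper orthant characterisation of Theorem~\ref{thm:OO-characterization}(b)(iii), keep track of the sign flip between $\geq_{\mathrm{lo}}$ for distributions and $\geq$ for stable tail dependence functions, and cleanly accommodate the case split according to whether $1 \in A$. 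Once this bookkeeping is done, the Beta--Gamma identity --- which is the precise mechanism by which the alluded majorisation-type argument for normalised Gamma variables comes in --- closes the proof in one line.
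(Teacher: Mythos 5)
Your proposal is correct, and its overall skeleton coincides with the paper's: reduce by transitivity to a change in a single coordinate, exploit the independence of the Gamma generator so that the remaining coordinates are literally shared, invoke Theorem~\ref{thm:OO-characterization}(a)(iii) and (b)(iii), and observe via $\max(x,y)=x+(y-x)_+$, $\min(x,y)=x-(x-y)_+$ and the normalisation $\EE U=\EE V=1$ that everything reduces to the convex-order statement $\Gamma(\beta_1)/\beta_1 \leq_{\mathrm{cx}} \Gamma(\alpha_1)/\alpha_1$ (this is exactly the content of the paper's Corollary~\ref{cor:GammaMonotonicity}, and the paper uses the same convex test functions $x\mapsto\max(a_1x,c)$ and $x\mapsto-\min(a_1x,c)$). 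Where you genuinely diverge is in how that convex order is proved. The paper deliberately routes it through majorisation theory: Marshall--Proschan (Proposition~\ref{prop:from-majorisation}) gives that $\EE\,g(\sum_{i=1}^n X_i/n)$ is nonincreasing in $n$, which by convolution stability handles shape parameters differing by a rational factor, and a Fatou/limiting argument extends to arbitrary $\alpha<\beta$. You instead use the Beta--Gamma algebra, $\Gamma(\alpha_1)\overset{d}{=}B\cdot\Gamma(\beta_1)$ with $B\sim\mathrm{Beta}(\alpha_1,\beta_1-\alpha_1)$ independent of $\Gamma(\beta_1)$, which produces an explicit martingale coupling $U\overset{d}{=}CV$, $\EE[CV\mid V]=V$, and conditional Jensen finishes in one line. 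Your route is shorter and avoids the rational-approximation step entirely; the paper's route is chosen to make the advertised link to majorisation explicit and would generalise to other infinitely divisible families where a clean multiplicative decomposition is unavailable. Both are complete; just make sure to note that the integrands $(M-a_1u)_+$ and $(a_1u-m)_+$ are Lipschitz in $u$, so all expectations involved are finite and Jensen applies without further integrability caveats.
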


\begin{figure}[htb]
    \centering
    \includegraphics[width=0.3\textwidth]{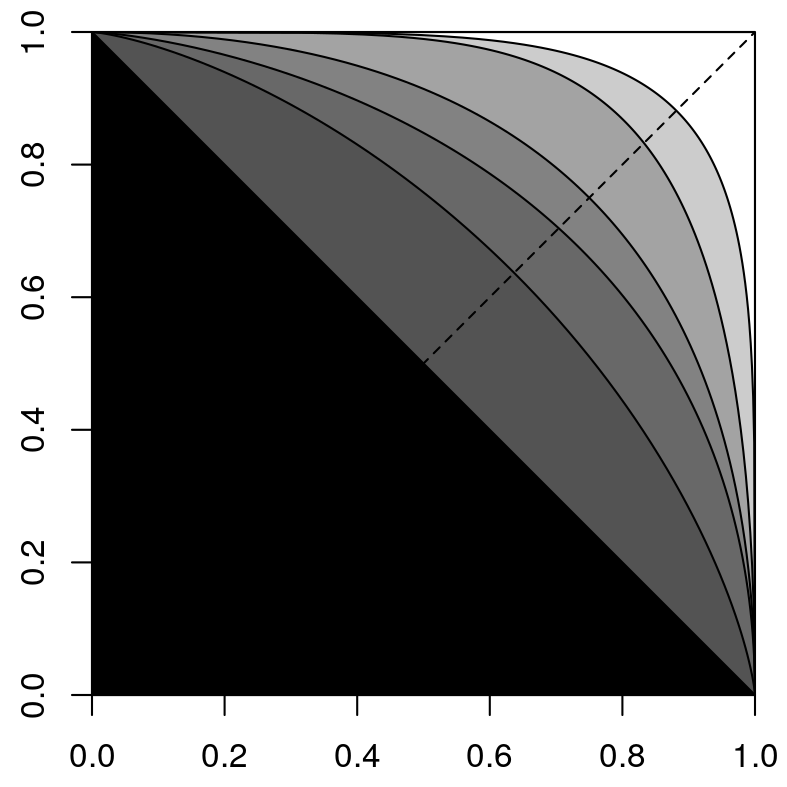}
    \includegraphics[width=0.3\textwidth]{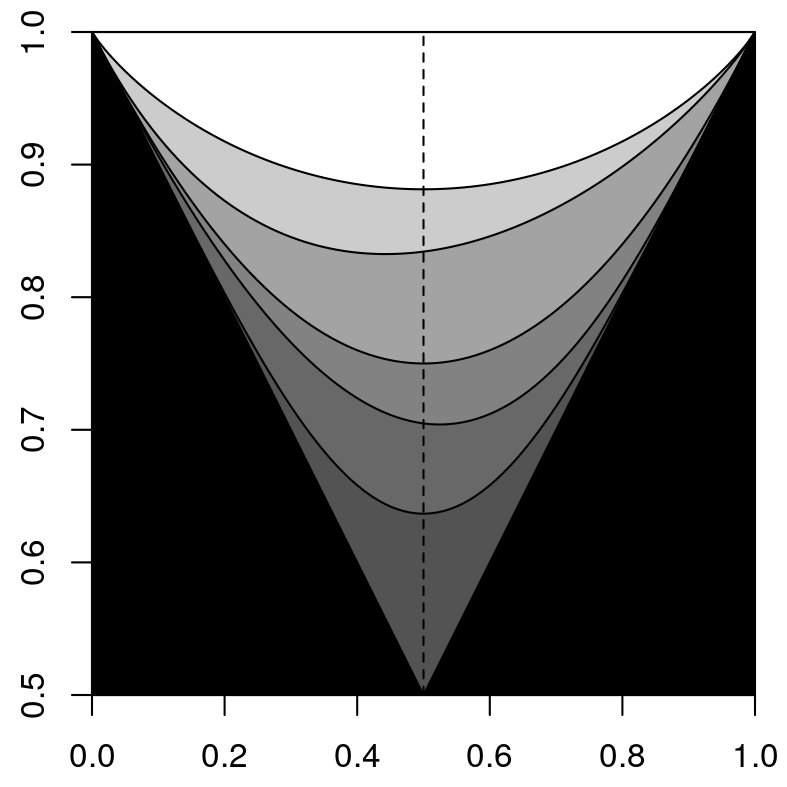}
    \caption{\small Nested max-zonoids and ordered Pickands dependence functions of the asymmetric max-stable Dirichlet family for $(\alpha_1,\alpha_2)\in\lbrace(0.25,0.25),(1,0.25),(1,1),(1,4),(4,4)\rbrace$. Componentwise smaller values of $(\alpha_1,\alpha_2)$ correspond to larger sets and larger Pickands dependence functions and are closer to the independence model.}
    \label{fig:AsymDirichletdepsets}
\end{figure}

\begin{example}
In order to draw attention to some further consequences of Theorem~\ref{thm:Dirichlet-PQD}, let ${\bm X} \sim \mathrm{MaxDir}({\bm \alpha})$ and ${\bm Y} \sim \mathrm{MaxDir}({\bm \beta})$ where $\alpha_i \leq \beta_i$, $i=1,\dots,d$, so that ${\bm X} \leq_{\mathrm{PQD}} {\bm Y}$, hence ${\bm X} \leq_{\mathrm{uo}} {\bm Y}$ and ${\bm X} \geq_{\mathrm{lo}} {\bm Y}$, which implies
\begin{align*}
    \min_{i=1,\dots,d}(a_iX_i)\leq_{\mathrm{st}}\min_{i=1,\dots,d}(a_iY_i) \qquad \text{for all ${\bm a}\in (0,\infty]^d$.}\\
      \max_{i=1,\dots,d}(a_iX_i)\geq_{\mathrm{st}}\max_{i=1,\dots,d}(a_iY_i) \qquad \text{for all ${\bm a}\in [0,\infty)^d$,}
\end{align*}
cf.~\eqref{eq:UO-nonnegative-min}, \eqref{eq:LO-nonnegative-max} and Lemma~\ref{lem:MaxDirClosure}.
       Exemplarily, we consider
a range of trivariate symmetric and asymmetric max-stable Dirichlet distributions $\mathrm{MaxDir}(\alpha_1,\alpha_2,\alpha_3)$ with parameters $(\alpha_1,\alpha_2,\alpha_3)$ given in Figure~\ref{fig:DirichletAngularDensities}. 
The colouring is chosen such that red models PQD-dominate blue models, which PQD-dominate black models. 

In addition, we consider the portfolio with equal weights $(1,1,1)$ and the resulting min-projections $\min(X_1,X_2,X_3)$ and max-projections $\max(X_1,X_2,X_3)$, where $(X_1,X_2,X_3)\sim \mathrm{MaxDir}(\alpha_1,\alpha_2,\alpha_3)$. Figures~\ref{fig:MinCDFs} and \ref{fig:MaxCDFs} display their distribution functions on the Gumbel scale. 
As commonly of interest for extreme value distributions, instead of the quantile function $Q$, we show the equivalent return level plot, which displays the return levels $Q(1-p)$ for the return period of $1/p$ observations. 
The plots of these functions are based on empirical estimates from one million simulated observations from the respective models, and their orderings are as expected from the theory, i.e.\ quantile functions increase as the dominance of the model grows, while distribution functions decrease.
\end{example}

\def\widthCDFs{0.3\linewidth}

\begin{figure}[htb]
    \centering\small
    \begin{tabular}{ccc}
    & \textsf{Distribution functions}
    & \textsf{Return levels}
    \\
    \rotatebox{90}{\hspace*{1cm} \textsf{Symmetric case}} 
    &\includegraphics[width=\widthCDFs]{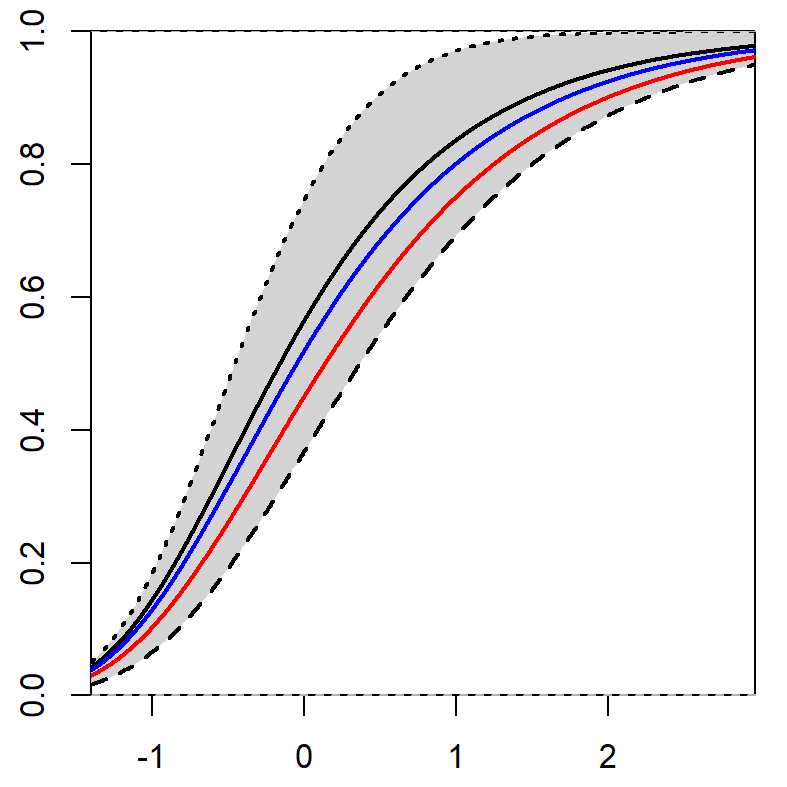}
    &\includegraphics[width=\widthCDFs]{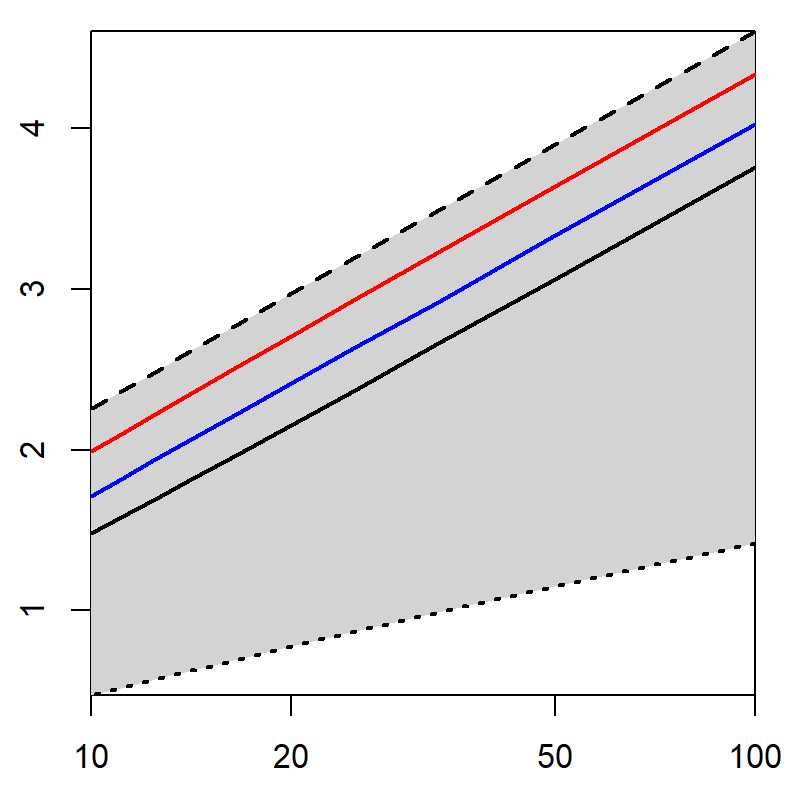}
    \\
    \rotatebox{90}{\hspace*{1cm} \textsf{Asymmetric case}}
    &\includegraphics[width=\widthCDFs]{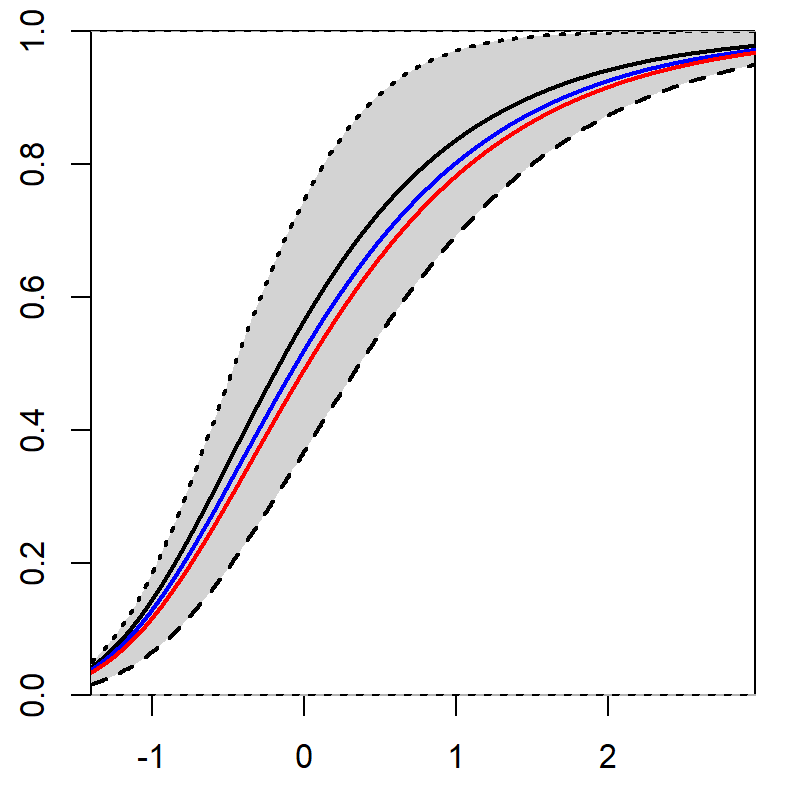}
    &\includegraphics[width=\widthCDFs]{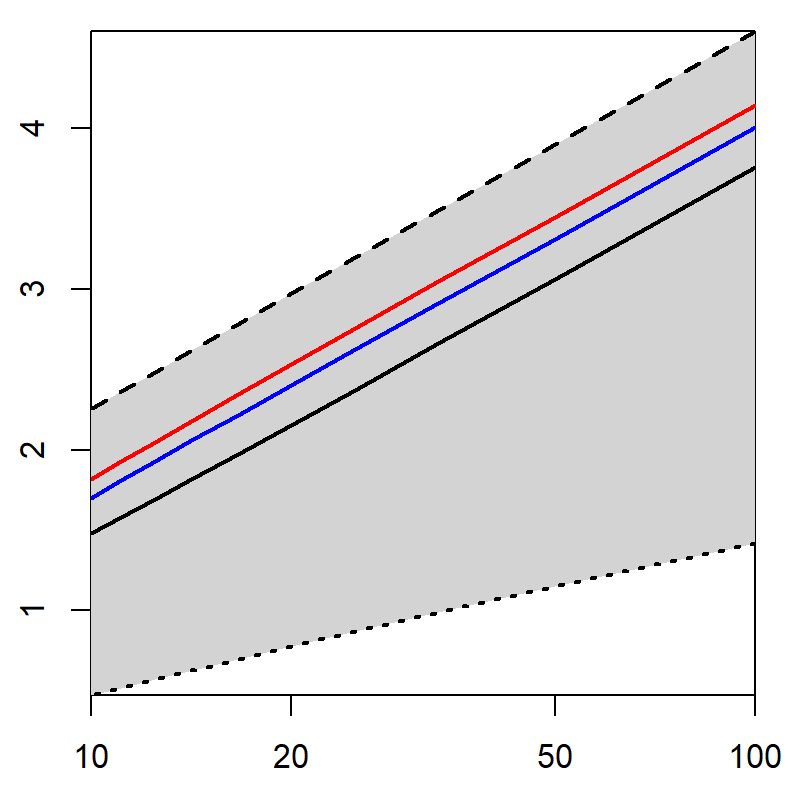}
    \end{tabular}
    \caption{\small Distribution functions (left) and return levels (right) for return periods 10 to 100 (on logarithmic scale) of $\min(X_1,X_2,X_3)$, where $(X_1,X_2,X_3)\sim \mathrm{MaxDir}(\alpha_1,\alpha_2,\alpha_3)$ on standard Gumbel scale with ${\bm \alpha}=(\alpha_1,\alpha_2,\alpha_3)$ as chosen in Figure~\ref{fig:DirichletAngularDensities}.  Top: symmetric case; bottom: asymmetric case. Black, blue and red colouring encodes the matching with Figure~\ref{fig:DirichletAngularDensities}. The grey areas represent the range between the fully dependent (dashed line) and fully independent (dotted line) cases.}
    \label{fig:MinCDFs}
\end{figure}

\begin{figure}[htb]
    \centering\small
    \begin{tabular}{ccc}
    & \textsf{Distribution functions}
    & \textsf{Return levels}
    \\
    \rotatebox{90}{\hspace*{1cm} \textsf{Symmetric case}} 
    &\includegraphics[width=\widthCDFs]{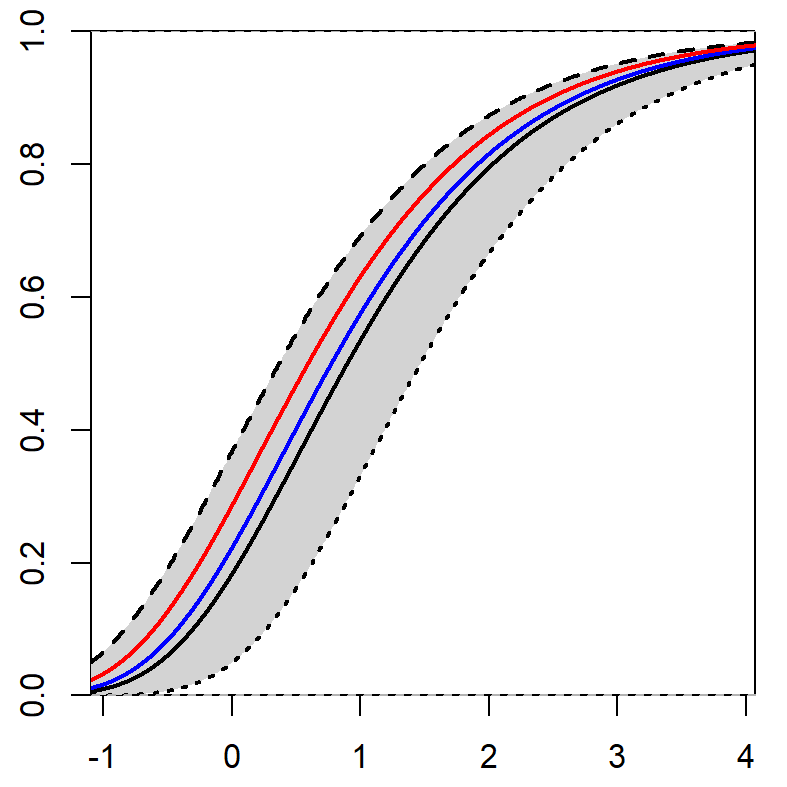}
    &\includegraphics[width=\widthCDFs]{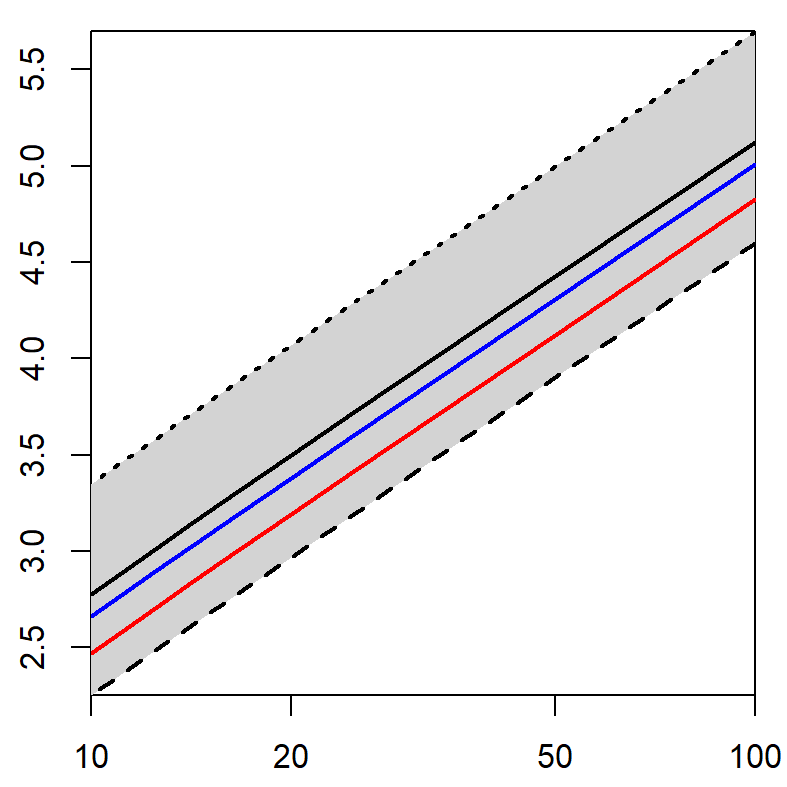}
    \\
    \rotatebox{90}{\hspace*{1cm} \textsf{Asymmetric case}}
    &\includegraphics[width=\widthCDFs]{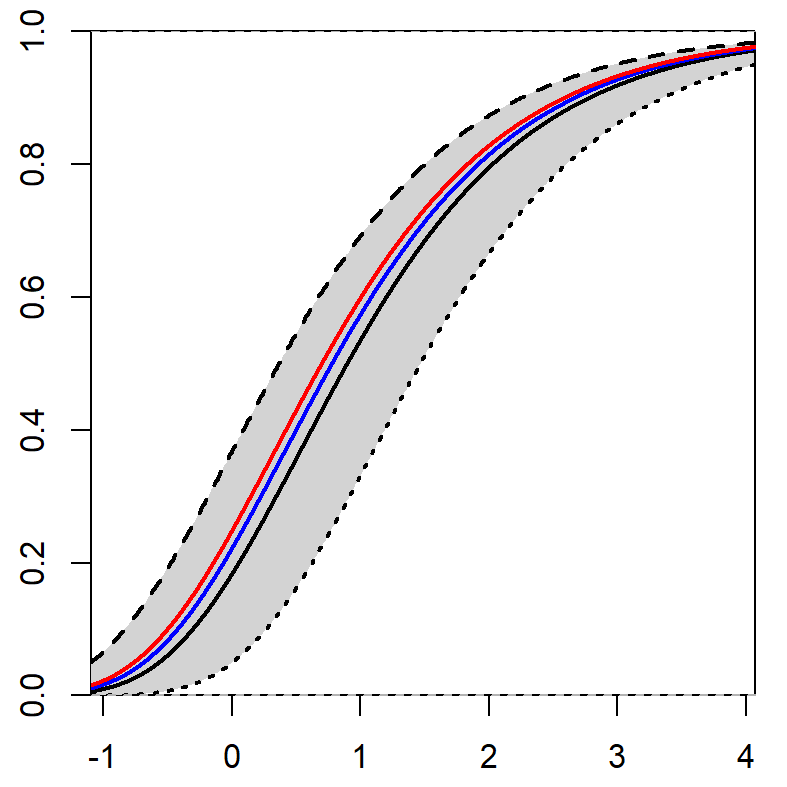}
    &\includegraphics[width=\widthCDFs]{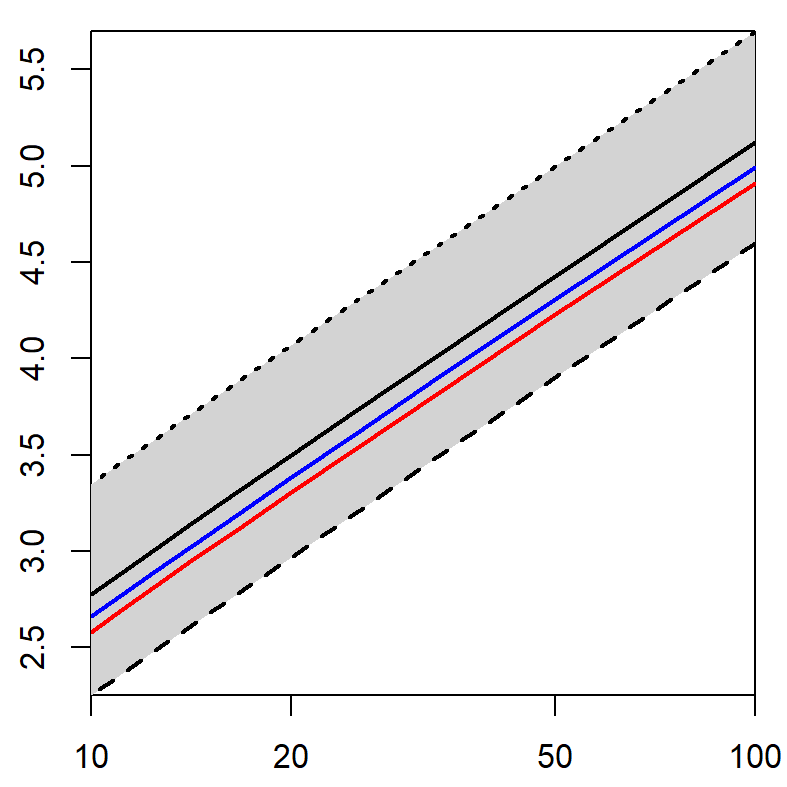}
    \end{tabular}
    \caption{\small Distribution functions (left) and return levels (right) for return periods 10 to 100 (on logarithmic scale) of $\max(X_1,X_2,X_3)$, where $(X_1,X_2,X_3)\sim \mathrm{MaxDir}(\alpha_1,\alpha_2,\alpha_3)$ on standard Gumbel scale with ${\bm \alpha}=(\alpha_1,\alpha_2,\alpha_3)$ as chosen in Figure~\ref{fig:DirichletAngularDensities}.  Top: symmetric case; bottom: asymmetric case. Black, blue and red colouring encodes the matching with Figure~\ref{fig:DirichletAngularDensities}. The grey areas represent the range between the fully dependent (dashed line) and fully independent (dotted line) cases.}
    \label{fig:MaxCDFs}
\end{figure}

Another prominent family of multivariate max-stable distributions that turns out to  be stochastically ordered in the PQD/concordance order is the H\"usler-Rei{\ss} family. It can be shown using the limit result from Theorem~\ref{thm:triangularArrayHR} together with Slepian's normal comparison lemma and some closure properties of the PQD/concordance order. Figure~\ref{fig:HRdepsets} provides an illustration in terms of nested max-zonoids and ordered Pickands dependence functions in the bivariate case. However, while these models are ordered, we would like to point out that none of the typically chosen families of log-Gaussian generators satisfy any of the orthant orders, cf.~Example~\ref{example:HR-generators-not-ordered}.

\begin{theorem}[PQD/concordance order of H\"usler-Rei{\ss} family]
\label{thm:HRm}
Consider the max-stable H\"usler-Rei{\ss} family from Theorem/Definition~\ref{thmdef:HR}.
If ${\gamma}_{i,j} \leq {\widetilde \gamma}_{i,j}$ for all $i,j \in \{1,\dots,d\}$, then 
\[\mathrm{HR}({\bm \gamma})  \geq_{\mathrm{PQD}} \mathrm{HR}(\widetilde {\bm \gamma}).\]
\end{theorem}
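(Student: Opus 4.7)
The plan is to obtain the PQD ordering at the level of the exponent measures $\Lambda_{\bm\gamma}$ and $\Lambda_{\widetilde{\bm\gamma}}$ of the two HR distributions and then transfer it to the distributions themselves by Corollary~\ref{cor:PQD}. To get that exponent-measure ordering, I will realise each exponent measure as a limit of suitably rescaled Gaussian probabilities via Theorem~\ref{thm:triangularArrayHR} (together with the Poisson-extremes principle underlying it) and then invoke Slepian's normal comparison inequality. Since orthant and PQD orderings are preserved under componentwise increasing marginal transformations, it is harmless to work on the Gumbel scale coming from Theorem~\ref{thm:triangularArrayHR}.

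Concretely, for each large $n$ I take correlation matrices $\rho^{(n)}_{ij} := 1 - \gamma_{ij}/(4\log n)$ and $\widetilde\rho^{(n)}_{ij} := 1 - \widetilde\gamma_{ij}/(4\log n)$; the entrywise ordering $\gamma_{ij} \leq \widetilde\gamma_{ij}$ becomes $\rho^{(n)}_{ij} \geq \widetilde\rho^{(n)}_{ij}$. Let $\bm Y^{(n)}$ and $\widetilde{\bm Y}^{(n)}$ be centered unit-variance Gaussian vectors with these correlations. Slepian's inequality, applied in both its joint-CDF and joint-survival form (the latter via $Y \mapsto -Y$, which preserves variances), yields for every $\bm c \in \RR^d$
\[
\PP(\bm Y^{(n)} \leq \bm c) \geq \PP(\widetilde{\bm Y}^{(n)} \leq \bm c), \qquad \PP(\bm Y^{(n)} > \bm c) \geq \PP(\widetilde{\bm Y}^{(n)} > \bm c).
\]

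By the Poisson-extremes principle accompanying Theorem~\ref{thm:triangularArrayHR}, for any Borel set $B \subset \RR^d$ bounded away from the null of $\Lambda_{\bm\gamma}$ (and a continuity set of it) one has
\[
n\,\PP\bigl(u_n(\bm Y^{(n)} - u_n) \in B\bigr) \longrightarrow \Lambda_{\bm\gamma}(B),
\]
and analogously for $\widetilde{\bm\gamma}$. Taking $B = U_{\bm a}$, the event $u_n(\bm Y^{(n)} - u_n) \in U_{\bm a}$ is the upper-orthant event $\bm Y^{(n)} > u_n + \bm a/u_n$; multiplying the upper-orthant Slepian inequality by $n$ and passing to the limit gives $\Lambda_{\bm\gamma}(U_{\bm a}) \geq \Lambda_{\widetilde{\bm\gamma}}(U_{\bm a})$. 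Taking $B = \RR^d \setminus L_{\bm a}$, the corresponding event has probability $1 - \PP(\bm Y^{(n)} \leq u_n + \bm a/u_n)$, and the complemented lower-orthant Slepian inequality yields $\Lambda_{\bm\gamma}(\RR^d \setminus L_{\bm a}) \leq \Lambda_{\widetilde{\bm\gamma}}(\RR^d \setminus L_{\bm a})$. In the language of Definition~\ref{def:orders-Lambda}, these two bounds combine to $\Lambda_{\widetilde{\bm\gamma}} \leq_{\mathrm{PQD}} \Lambda_{\bm\gamma}$, and Corollary~\ref{cor:PQD} delivers $\mathrm{HR}(\widetilde{\bm\gamma}) \leq_{\mathrm{PQD}} \mathrm{HR}(\bm\gamma)$, as claimed. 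To handle the extended matrices of Remark~\ref{ref:HRextended}, I would split both $\bm\gamma$ and $\widetilde{\bm\gamma}$ along a common refinement of their independence partitions and use closure of PQD under independent concatenation.

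The main technical point that will need to be worked out carefully is the Poisson-extremes passage: one has to confirm that, for every upper orthant $U_{\bm a}$ and every complement $\RR^d\setminus L_{\bm a}$ bounded away from the null, the rescaled Gaussian probabilities multiplied by $n$ do converge to the corresponding values of $\Lambda_{\bm\gamma}$, and that these sets can be taken as $\Lambda_{\bm\gamma}$-continuity sets (which holds for all but countably many $\bm a$, with the general case following by monotonicity). This is standard extreme value theory and is the natural companion to Theorem~\ref{thm:triangularArrayHR}. Importantly, this route via the exponent measures circumvents the more delicate question of whether PQD order is preserved under componentwise maxima of $n$ independent copies of a Gaussian vector, which is \emph{not} on the standard list of closure properties for PQD recalled in Section~\ref{sec:order}, and which is awkward to verify directly because the joint survival function of a componentwise maximum does not factorise the way the joint CDF does.
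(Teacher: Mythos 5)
Your strategy---Slepian at the Gaussian level, multiply by $n$, pass to the exponent measures, transfer back with Corollary~\ref{cor:PQD}---is viable, and after the shared Slepian step it genuinely diverges from the paper's proof. The paper never touches the exponent measures: it deduces ${\bm Y}\geq_{\mathrm{PQD}}\widetilde{\bm Y}$ for the Gaussian vectors themselves and then orders the componentwise maxima $u_n({\bm M}^{(n)}-u_n)$ for every fixed $n$ using closure of the PQD order under independent conjunction and under increasing componentwise combinations of independent PQD-ordered pairs (\citet{shsh_07} Theorems~9.A.4 and 9.A.5), finishing with closure under distributional limits. Your closing claim that this closure step is ``not on the standard list'' and awkward is therefore misplaced: it is precisely the paper's route, it is quotable from the cited references, and it does hold (condition on one of the two independent vectors; for each fixed value of it the event $\{\psi_j(\cdot,y_j')>c_j\ \forall j\}$ is an upper orthant in the other, so the orthant inequality can be applied twice). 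The trade-off is real, though: the paper's route upgrades verbatim to the supermodular order, which your route cannot reach since that order has no exponent-measure counterpart here; conversely, your route leans on the upper-orthant direction of Corollary~\ref{cor:PQD}, i.e.\ on Proposition~\ref{prop:key-OO-fundamentals}, which is the heaviest piece of machinery in the paper.

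Two concrete points need repair. First, $\rho^{(n)}_{ij}=1-\gamma_{ij}/(4\log n)$ need not be positive semidefinite for ${\bm\gamma}\in\mathcal{G}_d$, so the Gaussian vectors you compare may fail to exist: for the variogram matrix of $(0,W,2W)$, i.e.\ $\gamma_{12}=\gamma_{23}=1$, $\gamma_{13}=4$, the matrix $J-t{\bm\gamma}$ ($J$ the all-ones matrix) is not positive semidefinite for any $t>0$ (evaluate the quadratic form at $\varepsilon{\bm 1}-(-1,2,-1)^\top$ and let $\varepsilon\downarrow 0$). Use $\rho^{(n)}_{ij}=\exp(-\gamma_{ij}/(4\log n))$ as the paper does; \citet{bcr84} Theorem~3.2.2 then guarantees a valid correlation matrix, the limit condition of Theorem~\ref{thm:triangularArrayHR} is unchanged, and the entrywise ordering is preserved. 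Second, the relation $n\,\PP(u_n({\bm Y}^{(n)}-u_n)\in B)\to\Lambda_{\bm\gamma}(B)$ is not a quotable domain-of-attraction statement here, because the law of ${\bm Y}^{(n)}$ changes with $n$; Theorem~\ref{thm:triangularArrayHR} only gives convergence in distribution of the maxima. The convergence you need does follow, but you must derive it: for complements of lower orthants it comes from $[\PP({\bm Y}^{(n)}\leq{\bm c}_n)]^n\to G({\bm a})$ and $n\log(1-p_n)\sim -np_n$, applied to each sub-vector ${\bm Y}^{(n)}_S$ (whose maxima converge to the corresponding HR marginal), and the upper-orthant case is then obtained by inclusion--exclusion over the coordinates, with the remaining orthants handled by monotone limits exactly as in the proof of Theorem~\ref{thm:OO-characterization}~b). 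You should also compose with the componentwise exponential so that the limit measures live on $[0,\infty)^d\setminus\{{\bm 0}\}$ as Definition~\ref{def:orders-Lambda} and Corollary~\ref{cor:PQD} require. With these repairs the argument closes.
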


\begin{remark}
With almost identical proof, cf.~Appendix~\ref{app:DirichletHR}, we may even deduce 
\[\mathrm{HR}({\bm \gamma})  \geq_{\mathrm{sm}} \mathrm{HR}(\widetilde {\bm \gamma}),\]
where $\geq_{\mathrm{sm}}$ denotes the \emph{supermodular} order, cf.~\citet{mueller_02} Section~3.9 or \citet{shsh_07} Section~9.A.4. We have therefore included the respective arguments in the proof, too, although considering the supermodular order is otherwise beyond the scope of this article.
\end{remark}

\begin{figure}[htb]
    \centering
    \includegraphics[width=0.3\textwidth]{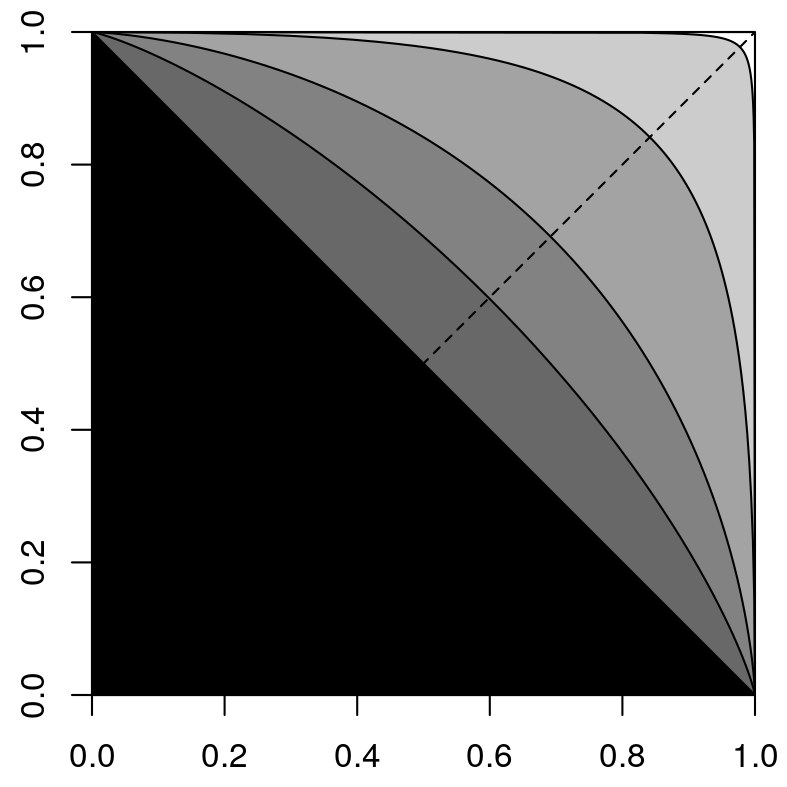}
       \includegraphics[width=0.3\textwidth]{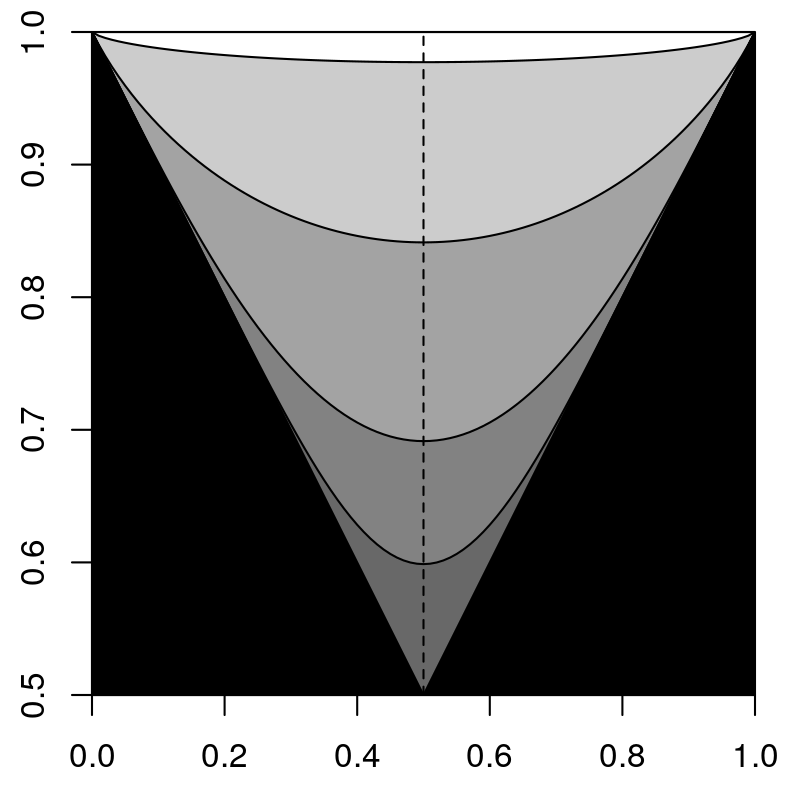}
    \caption{\small Nested max-zonoids and ordered Pickands dependence functions from the bivariate H\"usler-Rei{\ss} family for $\sqrt{\gamma} \in \{0.5,1,2,4\}$. Larger values of $\gamma$ correspond to larger sets and larger Pickands dependence functions and are closer to the independence model.}
    \label{fig:HRdepsets}
\end{figure}

\begin{remark}
    Theorem~\ref{thm:HRm} includes the assumption that both parameter matrices ${\bm \gamma}$ and $\widetilde {\bm \gamma}$ constitute a valid set of parameters for the H\"usler-Rei{\ss} model, i.e.\ they need to be elements of $\mathcal{G}_d$. In dimensions $d \geq 3$ it is possible that increasing (or decreasing) any of the parameters of a given valid ${\bm \gamma}$ will result in a set of parameters that is not valid for the H\"usler-Rei{\ss} model.
\end{remark}

\begin{remark}
      Since the orthant orders are closed under independent conjunction, 
      Theorem~\ref{thm:HRm} extends to the generalised H\"usler-Rei{\ss} model, where we can allow some parameter values of ${\bm \gamma}$ to assume the value $\infty$, as long as  ${\bm \gamma}$  remains negative definite in the extended sense (see Remark~\ref{ref:HRextended}).
\end{remark}

\begin{example}[Ordering of $G$/$\widetilde G$ does not imply generator ordering for ${\bm Z}$/$\widetilde {\bm Z}$ -- the case of H\"usler-Rei{\ss} log-Gaussian generators]\label{example:HR-generators-not-ordered}
  Consider the non-degenarate bivariate H\"usler-Rei{\ss} model with $\gamma_{12}=\gamma_{21}=\gamma \in (0,\infty)$ and let additionally $a\in [0,1]$. Then the zero mean bivariate Gaussian model $(W_1,W_2)^\top$ with $\EE(W_1)=\gamma a^2$, $\EE(W_2)=\gamma (1-a)^2$, $\mathrm{Cov}(W_1,W_2)=0.5\gamma \cdot (a^2+(1-a)^2-1)$ satisfies $\EE(W_1-W_2)^2=\gamma$, hence leads to a generator for the bivariate H\"usler-Rei{\ss} distribution in the sense of Theorem/Definition~\ref{thmdef:HR}. WLOG $a \in (0,1]$ (otherwise consider $1-a$ instead of $a$). Then $\log(Z_1)$ follows a non-degenerate univariate Gaussian distribution with mean $-0.5 \gamma a^2$ and variance $\gamma a^2$. The family of such distributions is not ordered in $\gamma>0$ (cf.~e.g.~\citet{shsh_07} Example~1.A.26 or \citet{mueller_02} Theorem~3.3.13). Hence, the bivariate family $(\log(Z_1),\log(Z_2))^\top$ can also not be ordered according to orthant order, nor can any multivariate family, for which this constitutes a marginal family. Accordingly, the corresponding log-Gaussian generators ${\bm Z}$ of the H\"usler-Rei{\ss} model will not be ordered, even when the resulting max-stable model and exponent measures are ordered as seen in Theorem~\ref{thm:HRm}.
\end{example}

While Dirichlet and H\"usler-Rei{\ss} families are ordered in the PQD/concordance sense according to the natural ordering of their parameter spaces, we would like to provide some examples that show that UO and LO ordering among simple max-stable distributions are in fact not equivalent. 

To this end, we revisit the Choquet max-stable model from Section~\ref{sec:choquet}. We write $\mathrm{Choquet}_{\mathrm{EC}}(\theta)$ for the simple max-stable Choquet distribution if it is parameterised by its extremal coefficients $\theta(A)$, $A \subset \{1,\dots,d\}$, $A \neq \emptyset$ and $\mathrm{Choquet}_{\mathrm{TD}}(\chi)$  if it is parameterised by its tail dependence coefficients $\chi(A)$, $A \subset \{1,\dots,d\}$, $A \neq \emptyset$.

\begin{lemma}[LO and UO order of Choquet family/Tawn-Molchanov model]
\label{lemma:Choquet-LO-UO}
Consider the family of max-stable Choquet models from Section~\ref{sec:choquet}. Then the LO order is characterised by the ordering of extremal coefficients, we have
\begin{align*}
\theta \leq \widetilde \theta \qquad &\iff \qquad
\mathrm{Choquet}_{\mathrm{EC}}(\theta)
\leq_{\mathrm{lo}} \mathrm{Choquet}_{\mathrm{EC}}(\widetilde \theta);
\end{align*}
and the UO order is characterised by the ordering of tail dependence coefficients, that is
\begin{align*}
\chi \leq \widetilde \chi \qquad &\iff \qquad
\mathrm{Choquet}_{\mathrm{TD}}(\chi)
\leq_{\mathrm{uo}} \mathrm{Choquet}_{\mathrm{TD}}(\widetilde \chi).
\end{align*}
\end{lemma}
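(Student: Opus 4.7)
The plan is to reduce both claims to Theorem~\ref{thm:OO-characterization} and perform a direct computation in the appropriate dependence descriptor. For the LO equivalence, by Theorem~\ref{thm:OO-characterization}a) it suffices to show that $\ell^* \leq \widetilde\ell^*$ is equivalent to $\theta \leq \widetilde\theta$. The forward direction is immediate from the Choquet integral formula $\ell^*({\bm x}) = \int_0^\infty \theta(\{i : x_i \geq t\})\,dt$ in Theorem~\ref{thm:choquet}b): pointwise domination of the integrand preserves under integration in $t$. The converse follows by evaluating both sides at the indicator vectors ${\bm e}_A$, since $\ell^*({\bm e}_A) = \theta(A)$.

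The main content lies in the UO equivalence. By Theorem~\ref{thm:OO-characterization}b) this reduces to $\Lambda^* \leq_{\mathrm{uo}} \widetilde\Lambda^*$ iff $\chi \leq \widetilde\chi$. I would compute $\Lambda^*(U_{\bm a})$ directly from the Choquet spectral representation \eqref{eq:Choquet-spectral}: with $\nu^* = \sum_A \tau(A)\,\delta_{{\bm e}_A}$, the exponent measure concentrates on the finitely many rays $\{r{\bm e}_A : r>0\}$, $A \neq \emptyset$, with radial intensity $\tau(A)\cdot r^{-2}\,dr$. Fix an upper orthant $U_{\bm a}$ bounded away from the origin and set $A_0 = \{i : a_i \geq 0\}$ and $M := \max_i a_i$; admissibility forces $M>0$, hence $A_0 \neq \emptyset$. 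A point $r{\bm e}_A$ belongs to $U_{\bm a}$ iff $r > a_i$ for all $i \in A$ and $0 > a_i$ for all $i \notin A$. The second condition is equivalent to $A \supseteq A_0$, and given that, the first reduces to $r > M$. Consequently, using \eqref{eq:chi-from-tau},
\[
\Lambda^*(U_{\bm a}) \;=\; \sum_{A \supseteq A_0} \tau(A) \int_{M}^{\infty} r^{-2}\,dr \;=\; \frac{1}{M}\sum_{A \supseteq A_0}\tau(A) \;=\; \frac{\chi(A_0)}{M}.
\]

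Since the factor $M>0$ cancels from both sides of any inequality, $\chi \leq \widetilde\chi$ implies $\Lambda^* \leq_{\mathrm{uo}} \widetilde\Lambda^*$. For the converse, every nonempty $A_0 \subset \{1,\dots,d\}$ is realised as $\{i : a_i \geq 0\}$ for some admissible ${\bm a}$ (take $a_i = 1$ for $i \in A_0$ and $a_i = -1$ otherwise, giving $M=1$), so $\Lambda^* \leq_{\mathrm{uo}} \widetilde\Lambda^*$ forces $\chi(A_0) \leq \widetilde\chi(A_0)$ on all such $A_0$. The principal obstacle I anticipate is purely bookkeeping in the case distinction for which rays ${\bm e}_A$ contribute to a given $U_{\bm a}$, in particular verifying that coordinates with $a_i = 0$ force $i \in A$ just as strictly positive ones do, and confirming that admissibility of $U_{\bm a}$ is precisely the condition $M>0$ that makes the computation well-defined.
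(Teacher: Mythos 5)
Your proof is correct and follows essentially the same route as the paper: both parts reduce to Theorem~\ref{thm:OO-characterization}, with the LO part handled identically via the Choquet integral and evaluation at ${\bm e}_A$, and the UO part resting on the same key fact that the discrete Choquet spectral measure makes the upper-orthant content factorise as a positive scalar times $\chi(A_0)$. The only cosmetic difference is that you verify criterion (ii) by computing $\Lambda^*(U_{\bm a})=\chi(A_0)/M$ directly on the rays, whereas the paper verifies the equivalent criterion (iii) by computing $\EE\big(\min_{i\in A}(a_iZ^*_i)\big)=\min_{i\in A}(a_i)\cdot\chi(A)$ for the discrete generator; these are the same quantity by \eqref{eq:U-minZ}.
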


As we know already from Theorem~\ref{thm:OO-characterization} part c),
in dimension $d=2$, it is easily seen that $\chi \leq \widetilde \chi$ is equivalent to $\theta \geq \widetilde \theta$, alternatively recall $\theta_{12}+\chi_{12}=2$. Starting from dimension $d=3$, this is no longer the case and one can easily construct examples, where only LO or UO ordering holds.

\begin{example}\label{example:UO-LO-not-equivalent}
Table~\ref{tab:Choquet-3-exchangeable} lists valid parameter sets for four different trivariate Choquet models. Among these, we can  easily read off that
\begin{itemize}
\itemsep0mm
    \item $B \leq_{\mathrm{uo}}D$, but there is no order between $B$ and $D$ according to lower orthants;
    \item $C \leq_{\mathrm{lo}} B$, but there is no order between $B$ and $C$ according to upper orthants.
\end{itemize}
Of course, it is still possible that Choquet models are ordered according to PQD order, e.g.
\begin{itemize}
\itemsep0mm
    \item $A \leq_{\mathrm{PQD}}B$.
\end{itemize}
It is also possible to have both LO and UO order in the same direction, e.g. 
\begin{itemize}
\itemsep0mm
    \item $A \leq_{\mathrm{uo}}C$ and $A \leq_{\mathrm{lo}}C$.
\end{itemize}
However, note that such an order can only arise if the bivariate marginal distributions all agree.
\end{example}

\begin{table}[hbt]
    \centering
       \caption{Valid parameter sets of trivariate Choquet models, cf.~Section~\ref{sec:choquet}, that are exchangeable so that parameters $\tau_A$, $\chi_A$, $\theta_A$ depend on sets $A$ only through their cardinality. Since $\chi_1=\theta_1=1$ these parameters need not be listed. We have $\tau_1+2\tau_{12}+\tau_{123}=1$, $\chi_{123}=\tau_{123}$, $\chi_{12}=\chi_{123}+\tau_{12}$,  $\theta_{12}=1+\tau_1+\tau_{12}$ and $\theta_{123}=\theta_{12}+\tau_1$.}
    \label{tab:Choquet-3-exchangeable}
    \vspace*{2mm}
    \begin{tabular}{|c|ccc|cc|cc|}
    \hline
         &  $\tau_1$ & $\tau_{12}$ & $\tau_{123}$ 
         &  $\chi_{12}$ & $\chi_{123}$ 
           & $\theta_{12}$ & $\theta_{123}$
         \\ \hline
         A & 0.3 & 0.2 & 0.3 & 0.5 & 0.3 & 1.5 & 1.8 \\
         B & 0.1 & 0.3 & 0.3 & 0.6 & 0.3 & 1.4 & 1.5 \\
         C & 0.4 & 0.1 & 0.4 & 0.5 & 0.4 & 1.5 & 1.9\\
         D & 0.3 & 0.0 & 0.7 & 0.7 & 0.7 & 1.3 & 1.6\\ \hline
    \end{tabular}
\end{table}

\paragraph{Acknowledgements}
We would like to thank two anonymous referees who have positively challenged us in two directions: (i) not only considering the order according to lower orthants, but to study the PQD-order, and (ii) extending the bivariate to the multivariate H\"usler-Rei{\ss} model. Their encouragement to pursue these routes has helped us to significantly broaden the scope of this work.  
 MC thankfully acknowledges funding from the Maths DTP 2020 -- Engineering and Physical Sciences Research Council grant EP/V520159/1 -- at the School of Mathematics of Cardiff University.

{\small
\bibsep=3pt
\bibliographystyle{agsm}

} 

\appendix
\section{Proofs}
\label{app:proofs}

\subsection{Proofs concerning fundamental order relations}\label{app:proofs-fundamentals}

\begin{proof}[Proof of Theorem~\ref{thm:OO-characterization}]
In what follows, let ${\bm X} \sim G$ and $\widetilde {\bm X} \sim \widetilde G$.

a) Because of \eqref{eq:LO-nonnegative-max}, it suffices to compare $G({\bm x})$ and $\widetilde G({\bm x})$ for  ${\bm x} \in (0,\infty)^d$ only. The same is true for $\ell$ and $\widetilde \ell$ as they are continuous on $[0,\infty)^d$. At the same time the test sets for the relation $\Lambda \leq_{\mathrm{lo}}\widetilde \Lambda$ in Definition~\ref{def:orders-Lambda} are precisely of the form $\RR^d \setminus L_{{\bm x}}$, where ${\bm x} \in (0,\infty)^d$. So the equivalence of (i), (ii), (iii) and (iv) follows directly from the relations
\[
G({\bm x})
=\PP({\bm X} \in L_{\bm x})
=\exp(-\Lambda([0,\infty]^d \setminus L_{\bm x}))
=\exp(-\Lambda(\RR^d \setminus L_{\bm x}))
\]
with
\[
\Lambda(\RR^d \setminus L_{\bm x}) = \ell(1/x_1,\dots,1/x_d)= \EE(\max(Z_1/x_1,\dots,Z_d/x_d)),
\]
and the respective tilde-counterparts. Likewise, the equivalence of (iv) and (v) is immediate from \eqref{eq:K-from-ell} and \eqref{eq:ell-from-K}.

b) We start by showing the equivalence between (ii) and (iii). The test sets for the relation $\Lambda \leq_{\mathrm{uo}}\widetilde \Lambda$ in Definition~\ref{def:orders-Lambda} are precisely the upper orthants $U_{{\bm x}}$, where at least one component of ${\bm x}$ is larger than zero.  Let ${\bm a} \in (0,\infty)^d$ and $A \subset \{1,\dots,d\}$, $A \neq \emptyset$. Define ${\bm x} \in \RR^d$ by setting $x_i=1/a_i$ if $i \in A$ and $x_i=-1$ else. Then $U_{\bm x}$ is an admissible test set and 
\begin{align}\label{eq:U-minZ}
  \Lambda(U_{\bm x}) = \Lambda\bigg(\Big\{{\bm y} \in [0,\infty)^d\setminus\{{\bm 0}\} \,:\, \min_{i\in A}(a_iy_i)>1\Big\}\bigg)
=\EE\Big(\min_{i \in A}(a_i Z_i)\Big).  
\end{align}
Likewise, $\widetilde\Lambda(U_{\bm x})=\EE(\min_{i \in A}(a_i \widetilde Z_i))$ and we may deduce the implication (ii)$\Rightarrow$(iii). Conversely, assume (iii) and note that the same argument implies $\Lambda(U_{\bm x})\leq \widetilde \Lambda(U_{\bm x})$ for any ${\bm x}$, which has at least one positive component, whilst all other components of ${\bm x}$ are negative. What remains to be seen is the same relation for upper orthants $U_{\bm x}$, for which at least one component of ${\bm x}$ is positive, but where among the non-positive components, there may be zeroes. Let ${\bm x} \in \RR^d$ be such a vector. For $n \in \NN$ let ${\bm x}_n \in \RR^d$ be an identical vector, but with zero entries replaced by $1/n$. Then $\Lambda(U_{{\bm x}_n})\leq \widetilde \Lambda(U_{{\bm x}_n})$ for all $n \in \NN$ by the previous argument, whilst $U_{{\bm x}_n} \uparrow U_{\bm x}$, such that 
$\Lambda(U_{{\bm x}_n}) \to \Lambda(U_{{\bm x}})$ and $\widetilde \Lambda(U_{{\bm x}_n}) \to \widetilde \Lambda(U_{{\bm x}})$ as $n \to \infty$. This shows (iii)$\Rightarrow$(ii).

 Next, we establish (i)$\Rightarrow$(iii). Assume (i). Since the order UO is closed under marginalisation, it suffices to consider $A=\{1,\dots,d\}$ in (iii), see also Lemma~\ref{lem:marZ}. Set $x_i=1/a_i$, $i=1,\dots,d$, such that \eqref{eq:U-minZ} holds (as well as the tilde-version) and note that the closure of $U_{\bm x}$ in $[0,\infty]^d \setminus \{{\bm 0}\}$ is a continuity set for each of the $(-1)$-homogeneous measures $\Lambda$ and $\widetilde \Lambda$. Hence, since each max-stable vector satisfies its own Domain-of-attraction conditions (cf.~e.g.~\citet{resnick_08} Section~5.4.2), we have 
 \[
 \Lambda(U_{\bm x})=\lim_{n \to \infty} n \PP({\bm X} \in n U_{\bm x})=\lim_{n \to \infty} n \PP({\bm X} \in  U_{n{\bm x}})
 \]
 and the analog for $\widetilde  \Lambda$ and $\widetilde {\bm X}$. The implication (i)$\Rightarrow$(iii) follows.

 Lastly, let us establish (iii)$\Rightarrow$(i). Suppose (iii) holds. We abbreviate $\chi^{(a)}(A)=\EE(\min_{i \in A}(a_i Z_i))$ and analogously $\widetilde \chi^{(a)}(A)=\EE(\min_{i \in A}(a_i \widetilde Z_i))$, such that (iii) translates into
  \[\chi^{(a)}(A) \leq \widetilde \chi^{(a)}(A)\]
 for all ${\bm a} \in (0,\infty)^d$ and $A \subset \{1,\dots,d\}$, $A \neq \emptyset$.
With $\theta^{(a)}(A)=\EE(\max_{i \in A}(a_i Z_i))$  we find that 
\[ \chi^{(a)}(A)=\sum_{I \subset A, \, I \neq \emptyset} (-1)^{|I|+1} \theta^{(a)}(I) \qquad \text{and} \qquad  \theta^{(a)}(A)=\sum_{I \subset A, \, I \neq \emptyset} (-1)^{|I|+1} \chi^{(a)}(I)\]
(similarly to \eqref{eq:chi-theta} and analogously for the tilde-version), where $\theta^{(a)}$ can be interpreted as directional extremal coefficient function. It is easily seen that  $\theta^{(a)}$ with $\theta^{(a)}(\emptyset)=0$ is union-completely alternating, cf.~Lemma~\ref{lem:ell-maxCA-theta-unionCA}.

 Because of \eqref{eq:UO-nonnegative-min}, in order to arrive at (i), it suffices to establish 
 \[
\PP\Big(\min_{i=1,\dots,d}(a_i X_i)>1\Big) \leq \PP\Big(\min_{i=1,\dots,d}(a_i \widetilde X_i)>1\Big)
 \]
 for all ${\bm a} \in (0,\infty)^d$. In the notation of Lemma~\ref{lem:ell-maxCA-theta-unionCA} and with $g(x)=1-\exp(-x)$,
 the left-hand side can be rewritten as
 \begin{align*}
     \PP\Big(\min_{i=1,\dots,d}(a_i X_i)>1\Big)
 &= 1-\sum_{I \subset \{1,\dots,d\}, \, I \neq \emptyset} (-1)^{|I|+1} \PP\Big(\max_{i \in I}(a_i X_i) \leq 1\Big)\\
 &= - \sum_{I \subset \{1,\dots,d\}} (-1)^{|I|+1} \exp\big(-\ell({\bm a}_I)\big)\\
  &=  \sum_{I \subset \{1,\dots,d\}} (-1)^{|I|+1}   - \sum_{I \subset \{1,\dots,d\}} (-1)^{|I|+1} \exp\big(-\theta^{(a)}(I)\big)\\
  &=  \sum_{I \subset \{1,\dots,d\}} (-1)^{|I|+1} g\big(\theta^{(a)}(I)\big)
 \end{align*}
 (and analogously for the tilde-version). The assertion follows then directly from Proposition~\ref{prop:key-OO-fundamentals}, since $g$ is a Bernstein function. 

 c) The statement follows from the relation
 \[\EE(\min(a_1Z_1,a_2Z_2)) + \EE(\max(a_1Z_1,a_2Z_2))  = \EE(\min(a_1\widetilde Z_1,a_2 \widetilde Z_2)) + \EE(\max(a_1 \widetilde Z_1,a_2 \widetilde Z_2)),\]
 as both sides are equal to $a_1+a_2$.
\end{proof}

\begin{proof}[Proof of Corollary~\ref{cor:PQD-boundaries}]
    In view of \eqref{eq:trivialLO} and Theorem~\ref{thm:OO-characterization} b), if suffices to investigate the upper and lower bounds of
    $\EE(\min_{i \in A}(a_i Z_i))$ for ${\bm a} \in (0,\infty)^d$ and $A \subset \{1,\dots,d\}$, $A \neq \emptyset$, where ${\bm Z}$ is a generator for $G$. We have
    \begin{align*}
         \EE\big(\min_{i \in A}(a_i Z_i)\big) &\leq \min_{i \in A}(\EE(a_i Z_i))=\min_{i \in A}(a_i)
         \qquad \text{and} \qquad
     \EE\big(\min_{i \in A}(a_i Z_i)\big) \geq \begin{cases}
         a_j & \text{if } A=\{j\},\\
         0 & \text{else},
     \end{cases} 
    \end{align*}
    and the upper and lower  bounds are attained by generators of the fully dependent model (${\bm Z}$ being almost surely ${\bm e}=(1,1,\dots,1)^\top$) and the independent model (${\bm Z}$ being uniformly distributed among the set $\{d{\bm e_1},d{\bm e_2},\dots,d{\bm e}_d\}$), respectively, which implies the assertion.
\end{proof}

\begin{proof}[Proof of Corollary~\ref{cor:PQD-associated-Choquet}]
    The lower orthant order ${\bm X}^* \, \geq_{\mathrm{lo}}\,  {\bm X}$ is known from Theorem~\ref{thm:choquet}. Let 
    ${\bm Z}$ and ${\bm Z}^*$ be generators of the respective models. Since they share identical extremal coefficients, they also share identical tail dependence coefficients $\chi(A)=\EE(\min_{i \in A} Z_i)=\EE(\min_{i \in A} Z^*_i)$, $A \subset \{1,\dots,d\}$, $A \neq \emptyset$, which can be retrieved from $\theta$ via \eqref{eq:chi-theta}. 
    In general, we have for $A \subset \{1,\dots,d\}$, $A \neq \emptyset$, ${\bm a} \in (0,\infty)^d$
    \[
    \EE\big(\min_{i \in A} (a_iZ_i)\big) \geq \min_{i \in A} (a_i) \EE\big(\min_{i \in A} (Z_i)\big) =  \min_{i \in A} (a_i) \cdot \chi(A).
    \]
    The Choquet model attains the lower bound, since with \eqref{eq:Choquet-spectral} and \eqref{eq:chi-from-tau}
     \begin{align*}
     \EE\big(\min_{i \in A} (a_iZ^*_i)\big)&=  \sum_{L \subset \{1,\dots,d\}, L \neq \{\emptyset\}} \tau(L)  \min_{i \in A} (a_i({\bm e}_L)_i)\\
     &=  \sum_{L \subset \{1,\dots,d\}, A \subset L}  \tau(L)  \min_{i \in A} (a_i) = \min_{i \in A} (a_i) \cdot \chi(A).
     \end{align*}  
     So by Theorem~\ref{thm:OO-characterization} we also have ${\bm X}^* \, \leq_{\mathrm{uo}}\,  {\bm X}$, hence the assertion.
\end{proof}

\begin{proof}[Proof of Lemma~\ref{lemma:Choquet-LO-UO}]
The LO part is immediate from $\theta \leq \widetilde \theta$ implying the inclusion of associated max-zonoids $K^* \subset \widetilde K^*$ or Choquet integrals $\ell^* \leq \widetilde \ell^*$ (cf.~Theorem~\ref{thm:choquet}) and then follows directly from Theorem~\ref{thm:OO-characterization} part a). For the UO part, note from the Proof of Corollary~\ref{cor:PQD-associated-Choquet} that for $A \subset \{1,\dots,d\}$, $A \neq \emptyset$, ${\bm a} \in (0,\infty)^d$
\begin{align*}
     \EE\big(\min_{i \in A} (a_iZ^*_i)\big)= \min_{i \in A} (a_i) \cdot \chi(A) \qquad \text{and} \qquad   \EE\big(\min_{i \in A} (a_i\widetilde Z^*_i)\big)= \min_{i \in A} (a_i) \cdot \widetilde \chi(A)
\end{align*}  
if ${\bm Z}$ and $\widetilde {\bm Z}$ are generators of the respective models, hence the assertion with Theorem~\ref{thm:OO-characterization} part b).
\end{proof}

\subsection{Proofs concerning the Dirichlet and H\"usler-Rei{\ss} models}
\label{app:DirichletHR}

\begin{proof}[Proof of Theorem~\ref{thmdef:Dirichlet}]
The equivalence of (ii) and (iii) has been verified in \citet{colestawn91}.
The equivalence of (i) and (ii) follows similarly to \citet{afz_15} (3) from
the fact that ${\bm D}$ is distributed like ${\bm \Gamma}/\lVert {\bm \Gamma}\lVert_1$ and the independence of  ${\bm \Gamma}/\lVert {\bm \Gamma} \rVert_1$ and $\lVert {\bm \Gamma} \rVert_1$. More precisely, let $\ell_1$ and $\ell_2$ be the stable tail dependence functions that arise from the generators (i) and (ii), respectively.
Then $\ell_1$ and $\ell_2$ can be expressed as follows for any $ {\bm x} \in [0,\infty)^d$
\begin{align*}
\ell_1({\bm x}) &= \EE \max_{i=1,\dots,d} \frac{x_i \Gamma_i} {\alpha_i}
= {\EE \lVert {\bm \Gamma} \rVert_1} \cdot \EE \max_{i=1,\dots,d} \frac{x_i \Gamma_i/\lVert {\bm \Gamma} \rVert_1} {\alpha_i},
\\
\ell_2({\bm x}) &=  \EE \max_{i=1,\dots,d} \frac{x_i \lVert{\bm\alpha}\rVert_1 D_i} {\alpha_i} =  \lVert{\bm\alpha}\rVert_1 \cdot \EE \max_{i=1,\dots,d} \frac{x_i D_i} {\alpha_i}.
\end{align*}
If suffices to note $\EE \lVert {\bm \Gamma} \rVert_1 = \lVert{\bm\alpha}\rVert_1$ in order to conclude $\ell_1=\ell_2$.
\end{proof}

In order to prove Theorem~\ref{thm:Dirichlet-PQD} we will use a simple inequality that follows from the theory of majorisation \citep{moa10}.

\begin{proposition}[\citet{marshallproschan65} Corollary~3, \citet{moa10} Proposition~B.2.b.] \label{prop:from-majorisation}
    Let $g:\RR \to \RR$ be continuous and convex and let $X_1,X_2,\dots$ be a sequence of independent and identically distributed random variables, then \[
    \EE \, g \bigg(\sum_{i=1}^n \frac{X_i}{n}\bigg)
    \]
    is nonincreasing in $n=1,2,\dots$.
\end{proposition}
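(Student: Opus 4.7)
The plan is to establish the monotonicity with a single application of Jensen's inequality, after writing the average of $n+1$ copies as a symmetric convex combination of the $n+1$ possible leave-one-out averages of $n$ copies. Concretely, I would set
\[
S_{n+1} := \frac{1}{n+1}\sum_{i=1}^{n+1} X_i, \qquad S_n^{(j)} := \frac{1}{n}\sum_{i\neq j} X_i, \qquad j=1,\dots,n+1,
\]
and observe the almost-sure identity
\[
S_{n+1} \;=\; \frac{1}{n+1}\sum_{j=1}^{n+1} S_n^{(j)},
\]
which holds because each $X_k$ appears in $S_n^{(j)}$ for exactly $n$ indices $j$ (all $j\neq k$), each time contributing $X_k/n$; the outer average therefore recovers $\frac{1}{n+1}\sum_k X_k$.

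Next, since $g$ is convex, I would apply Jensen's inequality pathwise to conclude
\[
g(S_{n+1}) \;\leq\; \frac{1}{n+1}\sum_{j=1}^{n+1} g\bigl(S_n^{(j)}\bigr)
\]
almost surely. Taking expectations on both sides and using that the $X_i$ are i.i.d., so that $S_n^{(j)} \overset{d}{=} \frac{1}{n}\sum_{i=1}^n X_i$ for every $j$, the right-hand side collapses to $\EE\, g\bigl(\frac{1}{n}\sum_{i=1}^n X_i\bigr)$, yielding the claimed monotonicity in $n$.

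There is no genuine obstacle here: the entire argument hinges on spotting the symmetric representation above. Conceptually, this is the majorisation relation $\bigl(\tfrac{1}{n+1},\dots,\tfrac{1}{n+1}\bigr) \prec \bigl(\tfrac{1}{n},\dots,\tfrac{1}{n},0\bigr)$ between weight vectors of length $n+1$, which is the entry point to the general Marshall--Olkin framework cited; but the direct Jensen computation bypasses any appeal to the general theory and uses only convexity of $g$ together with exchangeability of the $X_i$. The only mild subtlety is ensuring that $\EE\, g(S_n^{(j)})$ is well-defined for each $j$, which is guaranteed whenever the expectation on either side of the target inequality is assumed to exist.
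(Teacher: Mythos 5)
Your argument is correct: the identity $S_{n+1}=\frac{1}{n+1}\sum_{j=1}^{n+1}S_n^{(j)}$ holds because each $X_k$ appears in exactly $n$ of the leave-one-out averages, each time with weight $1/n$; convexity of $g$ applied to this convex combination gives the pointwise bound, and identical distribution of the $S_n^{(j)}$ collapses the expectation of the right-hand side to $\EE\,g\bigl(\frac{1}{n}\sum_{i=1}^n X_i\bigr)$. Note, however, that the paper does not prove this proposition at all -- it is imported verbatim from \citet{marshallproschan65} and \citet{moa10} -- so there is no in-paper proof to compare against. Relative to the cited sources, which obtain the statement as a corollary of the Schur-concavity of $a\mapsto \EE\,g(\sum_i a_iX_i)$ for exchangeable $X_i$ together with the majorisation $(\tfrac{1}{n+1},\dots,\tfrac{1}{n+1})\prec(\tfrac1n,\dots,\tfrac1n,0)$, your direct Jensen computation is exactly the specialisation of that machinery to this one majorisation step, and it buys a short, self-contained proof at the cost of no extra generality. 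The only caveat, which you already flag, is integrability: the pointwise inequality $g(S_{n+1})\leq\frac{1}{n+1}\sum_j g(S_n^{(j)})$ is unconditional, but passing to expectations requires that $\EE\,g\bigl(\frac{1}{n}\sum_{i=1}^n X_i\bigr)$ be well defined (e.g.\ $\EE|X_1|<\infty$ suffices, since a continuous convex $g$ is bounded below by an affine function, making the negative parts integrable); this implicit hypothesis is also present, tacitly, in the statement as quoted.
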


\begin{corollary}\label{cor:GammaMonotonicity}
     Let $g:\RR \to \RR$ be continuous and convex and, let $Z^{(\alpha)} \sim \Gamma(\alpha)$ follow a univariate Gamma distribution with shape parameter $\alpha>0$, then
     \[
    \EE \, g \bigg(\frac{Z^{(\alpha)}}{\alpha}\bigg)
    \]
    is nonincreasing in $\alpha \in (0,\infty)$.
\end{corollary}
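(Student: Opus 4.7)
Write $h(\alpha):=\mathbb{E}\,g(Z^{(\alpha)}/\alpha)$. The idea is, for any $0<\alpha_1<\alpha_2$, to couple $Z^{(\alpha_1)}$ and $Z^{(\alpha_2)}$ via the Beta--Gamma algebra so that the ratio $W_1=Z^{(\alpha_1)}/\alpha_1$ becomes a conditional mean of $W_2=Z^{(\alpha_2)}/\alpha_2$; the conditional form of Jensen's inequality then immediately yields $h(\alpha_1)\ge h(\alpha_2)$.

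Concretely, I would take $Z^{(\alpha_2)}\sim \Gamma(\alpha_2)$ together with an independent $B\sim\mathrm{Beta}(\alpha_1,\alpha_2-\alpha_1)$ and set $Z^{(\alpha_1)}:=B\,Z^{(\alpha_2)}$. The classical Beta--Gamma identity (if $X\sim\Gamma(\alpha_1)$ and $Y\sim\Gamma(\alpha_2-\alpha_1)$ are independent, then $X/(X+Y)\sim\mathrm{Beta}(\alpha_1,\alpha_2-\alpha_1)$ is independent of $X+Y\sim\Gamma(\alpha_2)$) guarantees that $Z^{(\alpha_1)}$ has the correct marginal law $\Gamma(\alpha_1)$. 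Passing to the normalised variables $W_i:=Z^{(\alpha_i)}/\alpha_i$, both of mean one, we obtain $W_1=(\alpha_2/\alpha_1)\,B\,W_2$ with $B\perp W_2$ and $\mathbb{E}[B]=\alpha_1/\alpha_2$, so that
\[
\mathbb{E}[W_1\mid W_2]\;=\;\frac{\alpha_2}{\alpha_1}\,\mathbb{E}[B]\,W_2\;=\;W_2.
\]
Applying the conditional Jensen inequality to the convex $g$ gives $\mathbb{E}[g(W_1)\mid W_2]\ge g(W_2)$ almost surely, and taking expectations yields $h(\alpha_1)\ge h(\alpha_2)$, as required.

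An alternative, more ``in-house'' route would be to invoke Proposition~\ref{prop:from-majorisation} directly with $X_i \overset{d}{=} Z^{(\beta)}/\beta$ i.i.d.\ (for an arbitrary $\beta>0$), noting that $\sum_{i=1}^n X_i/n \overset{d}{=} Z^{(n\beta)}/(n\beta)$; this gives monotonicity of $h$ along every arithmetic progression $\alpha = n\beta$, $n\in\mathbb{N}$, and hence along all rational ratios $\alpha_2/\alpha_1$. Extending from rational to irrational ratios would however require continuity of $h$, which is not automatic for an unbounded convex $g$. The Beta--Gamma coupling proposed above sidesteps this mild obstacle while exploiting essentially the same probabilistic mechanism (exchangeability and conditional means) that underlies Proposition~\ref{prop:from-majorisation}. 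The only caveat is the implicit assumption that the expectations $h(\alpha)$ are finite on the relevant range, which is needed for Jensen to be meaningful.
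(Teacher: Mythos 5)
Your Beta--Gamma coupling argument is correct, and it is a genuinely different route from the one in the paper. The paper proves the corollary by feeding the convolution stability of the Gamma family into Proposition~\ref{prop:from-majorisation}: writing $\alpha=(k/n)\beta$ and summing i.i.d.\ $\Gamma(\beta/n)$ variables gives the monotonicity whenever $\alpha$ and $\beta$ differ by a rational multiplier, and the general case is then obtained by approximating $\beta$ from above along such a rational chain, combining the resulting $\limsup$ bound with Fatou's lemma for the weak limit $Z^{(\beta_n)}/\beta_n \Rightarrow Z^{(\beta)}/\beta$. So the ``mild obstacle'' you flag in your alternative in-house route is real, but the paper does clear it --- not by proving continuity of $h$, which as you say is not automatic, but by a one-sided semicontinuity argument (Fatou gives $\EE\, g(Z^{(\beta)}/\beta)\leq \liminf_n \EE\, g(Z^{(\beta_n)}/\beta_n)$, which is all that is needed). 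Your coupling avoids this approximation step entirely: setting $Z^{(\alpha_1)}:=B\,Z^{(\alpha_2)}$ with $B\sim\mathrm{Beta}(\alpha_1,\alpha_2-\alpha_1)$ independent of $Z^{(\alpha_2)}$ yields the correct marginal by the standard Beta--Gamma identity, the normalised pair satisfies $\EE[W_1\mid W_2]=W_2$, and conditional Jensen delivers $h(\alpha_1)\geq h(\alpha_2)$ for \emph{all} real $0<\alpha_1<\alpha_2$ in one stroke. What your approach buys is a shorter, self-contained proof exhibiting the convex-order relation $Z^{(\alpha_2)}/\alpha_2\leq_{\mathrm{cx}}Z^{(\alpha_1)}/\alpha_1$ directly via an explicit martingale pair; what the paper's approach buys is that it runs entirely off the already-stated majorisation proposition, whose underlying mechanism (exchangeability and conditional means) is, as you observe, the same. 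Your caveat about integrability is handled automatically: since $g$ is convex and continuous it admits an affine minorant, and $W_1$ has mean one, so $\EE[g(W_1)\mid W_2]$ and $\EE\, g(W_1)$ are well defined in $(-\infty,\infty]$ and the inequality holds in the extended sense without further assumptions.
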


\begin{proof}
    We consider first the case that $\alpha=(k/n) \cdot \beta$ for some natural numbers $1\leq  k < n$. Then consider independent and identically distributed random variables $\Gamma_1,\Gamma_2,\dots$ following a $\Gamma(\beta/n)$ distribution. Then Proposition~\ref{prop:from-majorisation} gives
    \[
     \EE \, g \bigg(\frac{\sum_{i=1}^k  \Gamma_i}{\alpha}\bigg)=
       \EE \, g \bigg(\frac{n}{\beta} \cdot \sum_{i=1}^k \frac{\Gamma_i}{k}\bigg) \geq 
       \EE \, g \bigg(\frac{n}{\beta} \cdot \sum_{i=1}^n \frac{\Gamma_i}{n}\bigg)
       = \EE \, g \bigg(\frac{\sum_{i=1}^n  \Gamma_i}{\beta}\bigg).
    \]
    By the convolution stability of the Gamma distribution
    \[
    \sum_{i=1}^k \Gamma_i \sim \Gamma(\alpha) 
    \qquad \text{and} \qquad
     \sum_{i=1}^n \Gamma_i \sim \Gamma(\beta).
    \]
    Hence, the assertion is shown for $\alpha$ and $\beta$ that differ by a rational multiplier.
    
    If we only know $\alpha < \beta$, consider a decreasing sequence $\beta_n \downarrow \beta$, such that $\alpha$ and $\beta_n$ differ by a rational multiplier. This gives
$\EE g({Z^{(\alpha)}}/{\alpha}) \geq \limsup_{n \to \infty} \EE g({Z^{(\beta_n)}}/{\beta_n})$ by the above argument. On the other hand, Fatou's lemma gives $\EE g({Z^{(\beta)}}/{\beta}) \leq \liminf_{n \to \infty} \EE g({Z^{(\beta_n)}}/{\beta_n})$. This finishes the proof.
\end{proof}

\begin{proof}[Proof of Theorem~\ref{thm:Dirichlet-PQD}]
If ${\bm \alpha}={\bm \beta}$, the statement is clear. Else,
because the parameter space of the Dirichlet model is $(0,\infty)^d$, we can find a chain of parameter vectors ${\bm \alpha}={\bm \alpha}^{(0)}\leq {\bm \alpha}^{(1)}\leq \dots \leq {\bm \alpha}^{(m)}={\bm \beta}$, such that for each $i=0,\dots,m-1$, the vectors ${\bm \alpha}^{(i)}$ and ${\bm \alpha}^{(i+1)}$ differ only by one component. 
Hence it suffices to consider the case, where  ${\bm \alpha}$ and ${\bm \beta}$ differ only in one component. Without loss of generality, let this be the first component. 

Let ${\bm Z}$ be a Gamma generator for $\mathrm{MaxDir}({\bm \alpha})$ and $\widetilde {\bm Z}$ be a Gamma generator for for $\mathrm{MaxDir}({\bm \beta})$ in the sense of 
Theorem/Definition~\ref{thmdef:Dirichlet}. 
Then we may assume that $Z_i=\widetilde Z_i$ for $i=2,\dots,d$, whereas $\alpha_1Z_1\sim \Gamma(\alpha_1)$ and $\beta_1 \widetilde Z_1 \sim \Gamma(\beta_1)$ are independent from $(Z_2,\dots,Z_d)^\top$, and $\alpha_1<\beta_1$ by assumption. 
We will need to show (cf.~Theorem~\ref{thm:OO-characterization}) that for fixed ${\bm a} \in (0,\infty)^d$ and $A \subset \{1,\dots,d\}$, $A \neq \emptyset$
\[
\EE \min_{i \in A} \big(a_iZ_i\big) \leq \EE \min_{i \in A} \big(a_i\widetilde Z_i\big)
\qquad \text{and} \qquad 
\EE \max_{i=1,\dots,d} \big(a_iZ_i\big) \geq \EE \max_{i=1,\dots,d} \big(a_i\widetilde Z_i\big).
\]
Due to the setting above, it suffices to consider only subsets $A$ with $1 \in A$,
and due to the marginal standardisation $\EE(Z_1)=1$,  it suffices to restrict our attention to $A \setminus \{1\} \neq \emptyset$. Setting $V_A= \min_{i \in A \setminus\{1\}}{(a_iZ_i)}$ and $W= \max_{i=2,\dots,d}{(a_iZ_i)}$ this means the assertion will follow from
\[
\EE \min \big(a_1Z_1, V_A\big) \leq \EE \min\big(a_1\widetilde Z_1,V_A\big)
\qquad \text{and} \qquad 
\EE \max\big(a_1Z_1,W\big) \geq \EE \max\big(a_1\widetilde Z_1,W\big).
\]
Indeed, this is implied by Corollary~\ref{cor:GammaMonotonicity}, when considering the continuous convex functions $g_c(x)=-\min(a_1 x,c)$ or $g_c(x)=\max(a_1 x,c)$ for a constant $c\in \RR$.
\end{proof}

\begin{proof}[Proof of Theorem~\ref{thm:HRm}] Set 
\[\rho_{ij}^{(n)}=\exp(-\gamma_{ij}/(4 \log(n)) \qquad \text{and}\qquad  \widetilde \rho_{ij}^{(n)}=\exp(-\widetilde \gamma_{ij}/(4 \log(n))\]
for $i,j \in \{1,\dots,d\}$, $n \in \NN$, so that ${\bm \gamma}, {\widetilde {\bm \gamma}} \in \mathcal{G}_d$ ensures that the resulting matrices are correlation matrices, cf.~e.g.~\citet{bcr84} Theorem~3.2.2.
By construction, $\rho_{ij}^{(n)} \geq \widetilde\rho_{ij}^{(n)}$ for all $i,j,n$. 
And so the normal comparison lemma \citep{slepian_62}, cf. also \citet{tong_80} Section 2.1.\ or \citet{mueller_02} Example~3.8.6, implies that ${\bm Y} \geq_{\mathrm{PQD}} \widetilde{\bm Y}$ if ${\bm Y}$ and $\widetilde {\bm Y}$ are zero mean unit-variance Gaussian random vectors with correlations ${\bm \rho}$ and $\widetilde {\bm \rho}$, respectively. In fact, even ${\bm Y} \geq_{\mathrm{sm}} \widetilde{\bm Y}$ holds for the supermodular order \citep[Theorem 3.13.5]{mueller_02}.

Consider the triangular arrays with independent ${\bm Y}^{(n)}_i \sim {\bm Y}$, $i=1,\dots,n$ and $\widetilde {\bm Y}^{(n)}_i \sim \widetilde {\bm Y}$, $i=1,\dots,n$. Since $t(1-\exp(-a/t)) \to a$ as $t \to \infty$, Theorem~\ref{thm:triangularArrayHR} gives that
\[
u_n ({\bm M}^{(n)}-u_n) = u_n \cdot \bigg( \max_{i=1,\dots,n} ({\bm Y}^{(n)}_i)_1 - u_n ,\dots,  \max_{i=1,\dots,n} ({\bm Y}^{(n)}_i)_d - u_n  \bigg)^\top
\]
converges in distribution to $\mathrm{HR}({\bm \gamma})$ and the corresponding tilde-version, while the closure under independent conjunction (\citet{shsh_07} Theorem 9.A.5) together with \citet{shsh_07} Theorem~9.A.4 implies $u_n ({\bm M}^{(n)}-u_n) \geq_{\mathrm{PQD}} u_n (\widetilde {\bm M}^{(n)}-u_n)$ for all $n \in \NN$. 
In fact, even  $u_n ({\bm M}^{(n)}-u_n) \geq_{\mathrm{sm}} u_n (\widetilde {\bm M}^{(n)}-u_n)$ for all $n \in \NN$ as the supermodular order is also closed under independent conjuction \citep[Theorem~3.9.14]{mueller_02} and note \citet{shsh_07} Theorem~9.A.12.
The assertion follows now from the closure of the PQD-order under distributional limits (\citet{shsh_07} Theorem 9.A.5).
We even have $\mathrm{HR}({\bm \gamma}) \geq_{\mathrm{sm}} \mathrm{HR}(\widetilde {\bm \gamma})$, as the supermodular order satisfies the same closure property with respect to distributional limits \citep[Theorem~3.9.12]{mueller_02}.
\end{proof}

\section{Complete alternation and Bernstein functions}\label{app:harmonic}

We recall some elementary definitions and facts from \citet{bcr84}, cf.~also \citet{molchanovTheory}. 
Let $(S,\circ,e)$ be an \emph{abelian semigroup}, that is, a non-empty set $S$ with a composition $\circ$ that is associative and commutative and has a neutral element $e$. Three examples are of interest to us: 
\begin{enumerate}[(i)]
\itemsep0mm
    \item $S=[0,\infty)$ with $+$ and neutral element $0$, 
    \item $S=\mathcal{P}_d$, the power set of $\{1,\dots,d\}$, with the union operation $\cup$ and neutral element $\emptyset$,  
    \item $S=[0,\infty)^d$ with the componentwise maximum operation $\vee$ and neutral element ${\bm 0}$.
\end{enumerate}
Examples (ii) and (iii) are even \emph{idempotent} semigroups, as $s \circ s = s$ for these operations. 
We use the standard notation
\[(\Delta_{b} f)(a) = f(a) - f(a \circ b).\]
\begin{definition}\label{def:CA}
  A function $f: S \to \RR$ is called \emph{completely alternating} if for all $n\geq 1$, $\{s_1,\dots,s_n\}\subset S$ and $s\in S$ 
\[
(\Delta_{s_1} \Delta_{s_2} \dots \Delta_{s_n} f) (s) = \sum_{I \subset \{1,\dots,n\}} (-1)^{|I|} f( s \circ \bigcirc_{i \in I} \, s_i) \leq 0.
\]  
\end{definition}
For idempotent semigroups (examples (ii) and (iii) above), the complete alternation property coincides with negative definiteness, cf.~\citet{bcr84} 4.4.16 and 4.6.8.
\begin{definition}
  A function $f: S \to \RR$ is called \emph{negative definite} if for all $n\geq 2$, $\{s_1,\dots,s_n\}\subset S$, $\{a_1,\dots,a_n\}\subset \RR$ with $a_1+\dots+a_n=0$ 
\[
\sum_{j=1}^n \sum_{k=1}^n a_j a_k f (s_j \circ s_k) \leq 0.
\]  
\end{definition}

In the context of multivariate extremes, max-complete alternation of the stable tail dependence function implies union-complete alternation of the extremal coefficient function. In fact, the following directional version holds true irrespective of whether we take homogeneity or marginal standardisation into account or not.

\begin{lemma} \label{lem:ell-maxCA-theta-unionCA}
    Let $\ell:[0,\infty)^d \to [0,\infty)$ be max-completely alternating. Let ${\bm x} \in [0,\infty)^d$. Let $\theta^{(x)}: \mathcal{P}_d \to [0,\infty)$ be defined as $\theta^{(x)}(A)=\ell({\bm x}_A)$, where ${\bm x}_A={\bm x} \cdot {\bm e}_A \in [0,\infty)^d$ is the vector with $x_i$ as $i$-th coordinate if $i \in A$ and zero else. Then $\theta^{(x)}$ is union-completely alternating.
\end{lemma}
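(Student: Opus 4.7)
The plan is to observe that the map $\phi: \mathcal{P}_d \to [0,\infty)^d$ defined by $\phi(A) = {\bm x}_A$ is a semigroup homomorphism from the idempotent semigroup $(\mathcal{P}_d, \cup, \emptyset)$ to the idempotent semigroup $([0,\infty)^d, \vee, {\bm 0})$. Indeed, for any $A,B \subset \{1,\dots,d\}$ and any coordinate $i$, if $i \in A \cup B$ then at least one of $({\bm x}_A)_i$ or $({\bm x}_B)_i$ equals $x_i$ while the other is either $x_i$ or $0$, so the max is $x_i$; if $i \notin A \cup B$ then both entries are $0$. Hence ${\bm x}_A \vee {\bm x}_B = {\bm x}_{A \cup B}$, and clearly $\phi(\emptyset) = {\bm 0}$. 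Since $\theta^{(x)} = \ell \circ \phi$, the claim reduces to the standard fact that complete alternation is preserved under pullback by semigroup homomorphisms.

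Concretely, I would fix $n \geq 1$, subsets $A_1,\dots,A_n, A \subset \{1,\dots,d\}$, and compute directly from Definition~\ref{def:CA}:
\begin{align*}
(\Delta_{A_1}\Delta_{A_2}\cdots\Delta_{A_n}\theta^{(x)})(A)
&= \sum_{I \subset \{1,\dots,n\}} (-1)^{|I|}\, \theta^{(x)}\Big(A \cup \bigcup_{i \in I} A_i\Big) \\
&= \sum_{I \subset \{1,\dots,n\}} (-1)^{|I|}\, \ell\Big({\bm x}_{A \cup \bigcup_{i \in I} A_i}\Big) \\
&= \sum_{I \subset \{1,\dots,n\}} (-1)^{|I|}\, \ell\Big({\bm x}_A \vee \bigvee_{i \in I} {\bm x}_{A_i}\Big) \\
&= (\Delta_{{\bm x}_{A_1}}\Delta_{{\bm x}_{A_2}}\cdots\Delta_{{\bm x}_{A_n}} \ell)({\bm x}_A) \leq 0,
\end{align*}
where the second equality uses the definition of $\theta^{(x)}$, the third uses the homomorphism identity ${\bm x}_{A \cup B} = {\bm x}_A \vee {\bm x}_B$ applied iteratively, and the final inequality is the assumed max-complete alternation of $\ell$.

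There is no real obstacle here: the entire content is the homomorphism observation, after which the two difference calculi are literally the same expression. No use is made of $1$-homogeneity or marginal standardisation, consistent with the statement of the lemma.
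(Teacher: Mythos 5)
Your proof is correct and follows essentially the same route as the paper's: the key observation in both is the identity ${\bm x}_{A \cup B} = {\bm x}_A \vee {\bm x}_B$, from which the iterated difference operators for $\theta^{(x)}$ and $\ell$ coincide and the sign condition transfers directly. Your write-up merely makes the semigroup-homomorphism framing and the expansion from Definition~\ref{def:CA} explicit, which the paper leaves implicit.
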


\begin{proof}
    The result follows from the observation that ${\bm x}_{A \cup B}={\bm x}_A \vee {\bm x}_B$ for $A,B \in \mathcal{P}_d$. Therefore,
    \[(\Delta_{A_1} \dots \Delta_{A_n} \theta^{(x)}) (A) = (\Delta_{{\bm x}_{A_1}}  \dots \Delta_{{\bm x}_{A_n}} \ell) ({\bm x}_A) \leq 0  \]
    for $A,A_1,\dots,A_n \in \mathcal{P}_d$, where $n \geq 1$.
\end{proof}

\begin{lemma}[Independent concatenation] Let $\theta_1:\mathcal{P}(M) \to [0,\infty)$ and $\theta_2:\mathcal{P}(N) \to [0,\infty)$ be union-completely alternating, where $\mathcal{P}(M)$ and $\mathcal{P}(N)$ are the power sets of finite sets $M$ and $N$, respectively, such that $\theta_1(\emptyset)=\theta_2(\emptyset)=0$.
Then $\theta:\mathcal{P}(M \cup N) \to [0,\infty)$ with $\theta(A)=\theta_1(A \cap M) + \theta_2(A \cap N)$ is union-completely alternating and $\theta(\emptyset)=0$.
\end{lemma}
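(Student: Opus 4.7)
The plan is to recognise $\theta$ as a sum of pullbacks of the given union-completely alternating functions along simple semigroup homomorphisms. Specifically, the projections $\pi_M: \mathcal{P}(M \cup N) \to \mathcal{P}(M)$, $A \mapsto A \cap M$, and $\pi_N: \mathcal{P}(M \cup N) \to \mathcal{P}(N)$, $A \mapsto A \cap N$, satisfy $\pi_M(A \cup B) = \pi_M(A) \cup \pi_M(B)$ and likewise for $\pi_N$; that is, both are homomorphisms of the idempotent abelian semigroups $(\mathcal{P}(\cdot),\cup,\emptyset)$. With this observation one rewrites $\theta = \theta_1 \circ \pi_M + \theta_2 \circ \pi_N$, and the proof then reduces to two elementary closure properties of the class of union-completely alternating functions.

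First, I would verify the following pullback property: if $f: \mathcal{P}(T) \to \RR$ is union-completely alternating and $\pi:\mathcal{P}(S) \to \mathcal{P}(T)$ is any union-preserving map, then $f\circ \pi$ is union-completely alternating on $\mathcal{P}(S)$. This is a one-line computation from Definition~\ref{def:CA}: the alternating sum over subsets $I \subset \{1,\dots,n\}$ representing $(\Delta_{B_1} \dots \Delta_{B_n}(f\circ \pi))(A)$ can be pushed through $\pi$, which turns it into $(\Delta_{\pi(B_1)} \dots \Delta_{\pi(B_n)} f)(\pi(A)) \leq 0$. Applied to $\pi_M$ and $\pi_N$ separately, this shows that $\theta_1 \circ \pi_M$ and $\theta_2 \circ \pi_N$ are each union-completely alternating on $\mathcal{P}(M \cup N)$.

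Second, I would invoke the (trivial) fact that pointwise sums of union-completely alternating functions are union-completely alternating, as the defining inequality of Definition~\ref{def:CA} is linear in the function: the sum of two non-positive numbers is non-positive. Combining this with the previous step yields the alternation property of $\theta$, while the normalisation $\theta(\emptyset) = \theta_1(\emptyset) + \theta_2(\emptyset) = 0$ is immediate. I do not foresee any serious obstacle here; the only substantive observation is that $A \mapsto A \cap M$ is a $\cup$-homomorphism, which makes the iterated finite differences factor cleanly into a contribution from $M$ and a contribution from $N$.
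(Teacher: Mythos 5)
Your proof is correct, and it takes a genuinely different route from the paper's. The paper proves the lemma by representation: it invokes the Choquet theorem to write $\theta_1(A)=\sum_{K\,:\,K\cap A\neq\emptyset}a_K$ and $\theta_2(B)=\sum_{L\,:\,L\cap B\neq\emptyset}b_L$ with non-negative coefficients, then exhibits explicit coefficients $c_{K\cup L}$ on $\mathcal{P}(M\cup N)$ whose induced hitting functional reproduces $\theta_1(A\cap M)+\theta_2(A\cap N)$; complete alternation of $\theta$ is then inherited from this capacity representation. You instead argue directly from Definition~\ref{def:CA}: the maps $A\mapsto A\cap M$ and $A\mapsto A\cap N$ are $\cup$-homomorphisms, pullback along a $\cup$-homomorphism preserves union-complete alternation because the iterated difference $(\Delta_{B_1}\cdots\Delta_{B_n}(f\circ\pi))(A)$ is literally $(\Delta_{\pi(B_1)}\cdots\Delta_{\pi(B_n)}f)(\pi(A))$, and the class is closed under sums since the defining inequality is linear in $f$. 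Both arguments are sound; yours is shorter, avoids the Choquet theorem entirely, and is more general in that it does not rely on finiteness of $M$ and $N$ or on any integral representation, whereas the paper's version has the side benefit of producing the explicit spectral mass $c_{K\cup L}$ of the concatenated model, which is in the spirit of the Choquet-model parametrisations used elsewhere in the paper. Two small points worth a sentence in a polished write-up: the images $\pi_M(B_i)$ need not be distinct even when the $B_i$ are, and one of them may equal $\emptyset$; both cases are harmless because the semigroup is idempotent (repeated arguments collapse, $\Delta_{\emptyset}$ annihilates), but since Definition~\ref{def:CA} is phrased with a set $\{s_1,\dots,s_n\}$ of arguments it is worth saying so explicitly.
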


\begin{proof}
    By the Choquet theorem \citep[Theorem 2.3.2]{schneiderweil2008} we may express  \[\theta_1(A)=\sum_{K \in \mathcal{P}(M) : K \cap A \neq \emptyset} a_{K}
    \qquad \text{ and } \qquad \theta_2(B)=\sum_{L \in \mathcal{P}(N) : L \cap B \neq \emptyset} b_{L}
    \]
    for non-negative coefficients $a_{K}$, $K \subset M$, $K \neq \emptyset$ and $b_L$, $L \subset N$, $L \neq \emptyset$. Define for $A \subset M$, $B \subset N$
    \[\theta(A \cup B)=\sum_{(K,L) \in \mathcal{P}(M)\times \mathcal{P}(N) : (K\cup L) \cap (A \cup B) \neq \emptyset} c_{K \cup L},
    \quad \text{ where } \quad c_{K \cup L}=\begin{cases}
        a_K & \text{if } K \neq \emptyset, L=\emptyset,\\
        b_L & \text{if } K = \emptyset, L\neq \emptyset,\\
        0 & \text{if } K \neq \emptyset, L \neq \emptyset.
    \end{cases}
    \]
    Then it is easily seen that $\theta(A \cup B)=\theta_1(A) + \theta_2(B)$, hence the assertion.
\end{proof}

\begin{corollary}\label{cor:theta-augmented}
    Let $\theta:\mathcal{P}_d \to [0,\infty)$ be union-completely alternating with $\theta(\emptyset)=0$. Then 
    $\theta' : \mathcal{P}_{d+1} \to [0,\infty)$, $\theta'(A)=\theta(A \cap \{1,\dots,d\})+ c \mathbf{1}_{d+1 \in A}$ is
    union-completely alternating with $\theta'(\emptyset)=0$ for any $c\geq 0$.
\end{corollary}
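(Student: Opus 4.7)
The plan is to recognise this as a direct application of the preceding independent-concatenation lemma, with the role of $N$ played by the singleton $\{d+1\}$. Concretely, I would set $M=\{1,\dots,d\}$, $N=\{d+1\}$, take $\theta_1=\theta$, and define $\theta_2:\mathcal{P}(\{d+1\})\to[0,\infty)$ by $\theta_2(\emptyset)=0$ and $\theta_2(\{d+1\})=c$. Then $\theta'(A)=\theta_1(A\cap M)+\theta_2(A\cap N)$ matches the formula in the statement, and the lemma immediately yields union-complete alternation of $\theta'$ together with $\theta'(\emptyset)=0$, provided we have checked the hypotheses.

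The only non-trivial thing to verify is that the auxiliary function $\theta_2$ really is union-completely alternating. This is where one would do some work, but it is minimal: on the two-element lattice $\{\emptyset,\{d+1\}\}$ one has to show $(\Delta_{B_1}\dots\Delta_{B_n}\theta_2)(B)\le 0$ for all $n\ge 1$ and $B,B_1,\dots,B_n\subset\{d+1\}$. For $n=1$ this reduces to $\theta_2(B)-\theta_2(B\cup B_1)\le 0$, which follows from $c\ge 0$ and the fact that $\theta_2$ takes only the values $0$ and $c$ with $0\le c$. For $n\ge 2$, either some $B_i=\emptyset$ (in which case $\Delta_{B_i}\theta_2\equiv 0$ and the whole expression vanishes), or every $B_i=\{d+1\}$ (in which case idempotence of union collapses the iterated difference to a single $\Delta_{\{d+1\}}\theta_2$, already handled).

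With $\theta_2$ shown to be union-completely alternating and $\theta_2(\emptyset)=0$, the hypothesis of the preceding lemma is satisfied, and the conclusion follows. There is no real obstacle; the only subtlety is the trivial verification for $\theta_2$, and writing it up is essentially a one-line observation that any non-negative monotone function on a two-point chain is completely alternating.
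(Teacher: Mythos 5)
Your proposal is correct and matches the paper's intended argument: the corollary is stated as an immediate consequence of the independent-concatenation lemma with $M=\{1,\dots,d\}$, $N=\{d+1\}$, $\theta_1=\theta$ and $\theta_2(\{d+1\})=c$, which is exactly your decomposition. The only extra content in your write-up is the explicit (and correct) verification that $\theta_2$ is union-completely alternating on the two-point lattice, which the paper leaves implicit.
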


There are various equivalent definitions for \emph{Bernstein functions}. For us it will be sufficient to consider the following. The equivalence of (i) and (ii) in the following theorem is a consequence from the 2-divisibility of $([0,\infty), +, 0)$, cf.~\citet{bcr84} 4.6.8.

\begin{thmdef} A function $g:[0,\infty) \to \RR$ is called a \emph{Bernstein function}  if one of the following equivalent conditions is satisfied:
\begin{enumerate}[(i)]
\itemsep0mm
\item $g \geq 0$, $g$ is continuous, and $g$ is negative definite with respect to addition.
\item $g \geq 0$, $g$ is continuous, and
$g$ is completely alternating with respect to addition.
    \item 
$g$ can be expressed as
\[g(r)=a+br+\int_0^{\infty}(1-\mathrm{e}^{-tr}) \nu (\mathrm{d}t), \quad r\geq0,\] where $a,b\geq0$ and $\nu$ is a non-negative Radon measure on $(0,\infty)$ satisfying the integrability condition $\int_0^{\infty} \min(t,1)\nu (\mathrm{d}t)<\infty$. 
\end{enumerate}
\end{thmdef}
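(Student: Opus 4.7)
The Thmdef asserts that three characterisations of a continuous nonnegative $g:[0,\infty)\to\RR$ are equivalent: negative definiteness~(i), complete alternation~(ii), and the L\'evy--Khintchine-type integral representation~(iii). All three are classical within harmonic analysis on abelian semigroups, so the plan is to split off the two equivalences (i)$\iff$(ii) and (ii)$\iff$(iii), leveraging the semigroup structure of $([0,\infty),+,0)$ for each.

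For (i)$\iff$(ii) the decisive property is that $([0,\infty),+,0)$ is \emph{2-divisible}: every element can be written $r=(r/2)+(r/2)$. On any 2-divisible abelian semigroup, negative definiteness and complete alternation of continuous functions coincide; this is \citet{bcr84}~4.6.8, which the excerpt explicitly invokes. Together with the blanket nonnegativity assumption, this disposes of (i)$\iff$(ii) with no further work.

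For (iii)$\Rightarrow$(ii) the approach is direct verification. Expanding
\[
(\Delta_{s_1}\cdots\Delta_{s_n} g)(r) \;=\; \sum_{I\subset\{1,\dots,n\}}(-1)^{|I|}\, g\Bigl(r+\sum_{i\in I}s_i\Bigr),
\]
the constant $a$ and the linear term $br$ cancel whenever $n\geq 2$, while the integral piece collapses telescopically to
\[
-\int_0^\infty e^{-tr}\prod_{j=1}^n\bigl(1-e^{-ts_j}\bigr)\,\nu(dt)\;\leq\;0.
\]
For $n=1$ an extra contribution $-bs_1\leq 0$ survives, and nonnegativity of $g$ reduces to $a=g(0)\geq 0$ combined with monotonicity; the integrability $\int_0^\infty\min(t,1)\,\nu(dt)<\infty$ guarantees that $g$ is finite-valued so the display is well defined.

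The converse (ii)$\Rightarrow$(iii) is the substantive step and the main obstacle. It is essentially the Bernstein--L\'evy--Khintchine representation theorem for continuous negative definite functions on $[0,\infty)$. One standard route is: shift to $\widetilde g := g-g(0)$, which remains completely alternating with $\widetilde g(0)=0$; by Schoenberg-type arguments the family $e^{-t\widetilde g(\cdot)}$, $t>0$, is continuous and positive definite on $[0,\infty)$, so Bernstein's theorem on completely monotone functions yields Laplace representations $e^{-t\widetilde g(r)}=\int_{[0,\infty)}e^{-rs}\,\mu_t(ds)$ for sub-probability measures $\mu_t$; analysing $(\mu_t)$ as $t\downarrow 0$---isolating mass escaping to $\infty$ (giving $b$), the limiting atom at $0$ (giving $a$), and the diffuse residue (giving $\nu$)---one reconstructs representation~(iii). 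This entire chain is developed in \citet{bcr84}~Ch.~4, so the natural approach in a paper of this scope is to cite that development rather than reproduce it.
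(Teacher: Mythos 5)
Your proposal is correct and matches the paper's treatment: the paper states this as a classical result, handling (i)$\iff$(ii) exactly as you do via 2-divisibility and \citet{bcr84}~4.6.8, and deferring the equivalence with the integral representation to \citet{bcr84}~Ch.~4 without further argument. Your explicit telescoping verification of (iii)$\Rightarrow$(ii) and the Schoenberg--Bernstein sketch for (ii)$\Rightarrow$(iii) are both sound and simply flesh out what the paper leaves to the citation.
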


An important property of Berstein functions is that they act on negative definite kernels with non-negative diagonal, cf.~\citet{bcr84} 4.4.3. 

\begin{corollary} \label{cor:BernsteinAction}
     Let $S$ be an idempotent semigroup and $f: S \rightarrow [0,\infty)$ be completely alternating and $g$ a Bernstein function. Then the composition map $g \circ f: S \to [0,\infty)$ is completely alternating.  
\end{corollary}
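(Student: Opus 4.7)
The plan is to reduce the statement to Schoenberg's theorem via the Lévy--Khintchine representation of $g$. Since the preceding paragraph in the paper notes that on an idempotent semigroup complete alternation coincides with negative definiteness (cf.~\citet{bcr84} 4.4.16 and 4.6.8), I will work with negative definiteness throughout and show that $g \circ f$ is negative definite.

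First I would invoke representation (iii) from the preceding Theorem/Definition to write
\[ (g \circ f)(s) = a + b\, f(s) + \int_0^\infty \bigl(1 - e^{-t f(s)}\bigr)\, \nu(\mathrm{d}t), \qquad s \in S. \]
Non-negative constants, non-negative scalar multiples, finite sums, and pointwise integrals against non-negative measures all preserve negative definiteness (the last of these by swapping the finite sum that appears in the negative-definiteness test with the integral; this interchange is legitimate because $0 \leq 1 - e^{-t f(s)} \leq \min(t f(s),1)$ combined with the integrability condition $\int_0^\infty \min(t,1)\, \nu(\mathrm{d}t) < \infty$ provides absolute convergence). Since $a \geq 0$ and $bf$ inherits negative definiteness directly from $f$, the problem reduces to showing that the function $\phi_t := 1 - e^{-t f}$ is negative definite on $S$ for every fixed $t > 0$.

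For this last step I would apply Schoenberg's theorem (\citet{bcr84} Section~3.2): since $f: S \to [0,\infty)$ is negative definite with $f \geq 0$, the function $s \mapsto e^{-t f(s)}$ is positive definite on $S$ for every $t > 0$. Then for any $n \geq 2$, any $s_1,\dots,s_n \in S$ and any $c_1,\dots,c_n \in \RR$ with $c_1 + \dots + c_n = 0$,
\[ \sum_{j,k=1}^n c_j c_k\, \phi_t(s_j \circ s_k) = \Bigl(\sum_{j=1}^n c_j\Bigr)^{\!2} - \sum_{j,k=1}^n c_j c_k\, e^{-t f(s_j \circ s_k)} = -\sum_{j,k=1}^n c_j c_k\, e^{-t f(s_j \circ s_k)} \leq 0, \]
which is exactly negative definiteness of $\phi_t$, and the corollary follows.

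The main obstacle is really only bookkeeping around the integration step: one must justify exchanging the finite quadratic sum with $\int_0^\infty \cdot\, \nu(\mathrm{d}t)$. The bound $1 - e^{-tf(s)} \leq \min(tf(s),1)$ combined with $\int_0^\infty \min(t,1)\,\nu(\mathrm{d}t)<\infty$ makes this routine Fubini. Everything else is a direct assembly of the closure properties of negative definite functions and Schoenberg's classical correspondence.
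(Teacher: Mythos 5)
Your argument is correct and amounts to the standard proof of the very fact the paper invokes by citation: the paper deduces this corollary from \citet{bcr84} 4.4.3 (Bernstein functions operate on negative definite kernels with non-negative diagonal) together with the equivalence of complete alternation and negative definiteness on idempotent semigroups, and your L\'evy--Khintchine decomposition plus Schoenberg's theorem is precisely how that cited result is established. So this is essentially the same route, just written out in full, and I see no gaps.
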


\begin{corollary} \label{cor:augmented-inequality}
   Let $\theta:\mathcal{P}_d \to [0,\infty)$ be union-completely alternating with $\theta(\emptyset)=0$ and $g$ be a Bernstein function. Let $A^* \subset \{1,\dots,d\}$ and $c>0$. Then
      \[
    \sum_{J \subset \{1,\dots,d\} \setminus A^*} (-1)^{|J|} g\big(\theta(A^* \cup J)\big)
    \leq 
      \sum_{J \subset \{1,\dots,d\} \setminus A^*} (-1)^{|J|} g\big(\theta(A^* \cup J)+c\big).
    \]
\end{corollary}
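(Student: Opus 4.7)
The plan is to realise the displayed difference as a single higher-order mixed difference of a union-completely alternating function and then invoke Definition~\ref{def:CA}. Concretely, write $B=\{1,\dots,d\}\setminus A^{*}$ and let $b_{1},\dots,b_{m}$ be its elements. I would first note that the left-hand side and the right-hand side of the claimed inequality are already mixed $\Delta$-differences: for any $f:\mathcal{P}_d\to\RR$
\[
(\Delta_{b_{1}}\Delta_{b_{2}}\cdots\Delta_{b_{m}}f)(A^{*})=\sum_{J\subset B}(-1)^{|J|}f(A^{*}\cup J),
\]
so the inequality to be proved is exactly
\[
(\Delta_{b_{1}}\cdots\Delta_{b_{m}}(g\circ\theta))(A^{*})\;\le\;(\Delta_{b_{1}}\cdots\Delta_{b_{m}}(g\circ\theta_{c}))(A^{*}),
\]
where $\theta_{c}(A):=\theta(A)+c$; equivalently the difference of the two sides equals the mixed difference of $g\circ\theta-g\circ\theta_{c}$ against the elements of $B$.

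The key idea is to move to one higher dimension so that the additive shift by $c$ becomes itself a $\Delta$-operation. Following Corollary~\ref{cor:theta-augmented}, I would introduce
\[
\theta':\mathcal{P}_{d+1}\to[0,\infty),\qquad \theta'(A):=\theta(A\cap\{1,\dots,d\})+c\,\mathbf{1}_{\{d+1\in A\}},
\]
which is union-completely alternating with $\theta'(\emptyset)=0$. Then by Corollary~\ref{cor:BernsteinAction} (Bernstein functions act on completely alternating functions on idempotent semigroups, applied to $(\mathcal{P}_{d+1},\cup,\emptyset)$) the composition $g\circ\theta':\mathcal{P}_{d+1}\to[0,\infty)$ is union-completely alternating.

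The main step is now a direct computation of one $\Delta$-difference. For any $A\subset\{1,\dots,d\}$,
\[
(\Delta_{\{d+1\}}(g\circ\theta'))(A)=g(\theta'(A))-g(\theta'(A\cup\{d+1\}))=g(\theta(A))-g(\theta(A)+c),
\]
so applying Definition~\ref{def:CA} to $g\circ\theta'$ with the $m+1$ singletons $\{b_{1}\},\dots,\{b_{m}\},\{d+1\}$ evaluated at $A^{*}$ yields
\[
(\Delta_{\{b_{1}\}}\cdots\Delta_{\{b_{m}\}}\Delta_{\{d+1\}}(g\circ\theta'))(A^{*})=\sum_{J\subset B}(-1)^{|J|}\bigl[g(\theta(A^{*}\cup J))-g(\theta(A^{*}\cup J)+c)\bigr]\le 0,
\]
which is exactly the inequality to be proved after rearrangement.

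I do not anticipate a real obstacle; the only technical care needed is to check that the augmented $\theta'$ in Corollary~\ref{cor:theta-augmented} is indeed union-completely alternating (so that applying the Bernstein function preserves this property on the idempotent semigroup $(\mathcal{P}_{d+1},\cup)$), and to bookkeep the signs so that the additional $\Delta_{\{d+1\}}$ produces exactly the shift by $c$. All of this is routine given Corollaries~\ref{cor:theta-augmented} and~\ref{cor:BernsteinAction}.
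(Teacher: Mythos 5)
Your proposal is correct and follows essentially the same route as the paper's proof: augment $\theta$ to $\theta'$ on $\mathcal{P}_{d+1}$ via Corollary~\ref{cor:theta-augmented}, apply Corollary~\ref{cor:BernsteinAction} to get that $g\circ\theta'$ is union-completely alternating, and then read off the inequality from Definition~\ref{def:CA} with the extra element $d+1$ by splitting the sum over $J'\subset(\{1,\dots,d\}\setminus A^*)\cup\{d+1\}$ according to whether $d+1\in J'$. The only difference is cosmetic: you phrase the final step as a mixed $\Delta$-difference against the singletons $\{b_1\},\dots,\{b_m\},\{d+1\}$, while the paper writes the same quantity directly as a signed sum.
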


\begin{proof}
    By Corollary~\ref{cor:theta-augmented}, the function
       $\theta' : \mathcal{P}_{d+1} \to [0,\infty)$, $\theta'(A)=\theta(A \cap \{1,\dots,d\})+ c \mathbf{1}_{d+1 \in A}$ is
    union-completely alternating with $\theta'(\emptyset)=0$. Hence, Corollary~\ref{cor:BernsteinAction} implies that $g \circ \theta'$ is again union-completely alternating.
    Hence, by Definition~\ref{def:CA} and since $\{1,\dots,d,d+1\} \setminus A^*$ is not empty (it contains at least the element $d+1$)
     \[
    \sum_{J' \subset \{1,\dots,d,d+1\} \setminus A^*} (-1)^{|J'|} g\big(\theta'(A^* \cup J')\big)
    \leq 0.
    \]
    Now each $J'$ above is either a subset $J$ of $\{1,\dots,d\} \setminus A^*$ or it is of the form $J \cup \{d+1\}$, where $J$ is a subset of $\{1,\dots,d\} \setminus A^*$. Separating the summands accordingly gives the assertion.
\end{proof}

The following proposition is the key argument to establish the implication $\Lambda \leq_{\mathrm{uo}} \widetilde \Lambda$ $\Rightarrow$ $G \leq_{\mathrm{uo}} \widetilde G$ 
in Theorem~\ref{thm:OO-characterization}.

\begin{proposition} \label{prop:key-OO-fundamentals}
    Let $\theta: \mathcal{P}_d \rightarrow [0,\infty)$ and $\widetilde \theta: \mathcal{P}_d \rightarrow [0,\infty)$ be union-completely alternating with $\theta(\emptyset)=\widetilde \theta(\emptyset)=0$. For $A \subset \{1,\dots,d\}$, $A \neq \emptyset$ set
    \[\chi(A)=\sum_{I \subset A, \, I \neq \emptyset} (-1)^{|I|+1} \theta(I) \qquad \text{and} \qquad \widetilde \chi(A)=\sum_{I \subset A, \, I \neq \emptyset} (-1)^{|I|+1} \widetilde \theta(I).\]
    Suppose
    \[ \chi(A) \leq \widetilde \chi(A) \quad \text{ for all }  \quad A \subset \{1,\dots,d\}, \, A \neq \emptyset. \]
    Let $g: [0,\infty) \to [0,\infty)$ be a Bernstein function. Then 
    \[
    \sum_{I \subset \{1,\dots,d\}} (-1)^{|I|+1} g\big(\theta(I)\big)
    \leq \sum_{I \subset \{1,\dots,d\}} (-1)^{|I|+1} g\big(\widetilde \theta(I)\big).
    \]
\end{proposition}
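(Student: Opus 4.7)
The plan is to decompose the Bernstein function via its L\'evy-Khintchine representation $g(r) = a + br + \int_0^\infty (1 - e^{-tr})\,\nu(dt)$ and establish the required inequality separately for each component. The contribution from the constant $a$ vanishes since $\sum_{I \subset D} (-1)^{|I|+1} = 0$ for $d \geq 1$ (writing $D := \{1,\dots,d\}$). The linear term $br$ yields $b\,\chi(D)$, and the hypothesis $\chi(D) \leq \widetilde\chi(D)$ handles this piece directly. The crux is therefore to prove, for each fixed $t > 0$ and $g_t(r) := 1 - e^{-tr}$, the single-parameter inequality
\[ F_t(\chi) \,:=\, \sum_{I \subset D} (-1)^{|I|+1} g_t\bigl(\theta_\chi(I)\bigr) \,\leq\, F_t(\widetilde\chi), \]
where $\theta_\chi(I) = \sum_{J \subset I,\, J \neq \emptyset} (-1)^{|J|+1}\chi(J)$ is the M\"obius inverse recovering $\theta$ from $\chi$. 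Integrating this family of inequalities against $\nu$ and combining with the linear term yields the general statement; integrability is not problematic because $F_t$ will turn out to be a probability, which combined with $\int_0^\infty \min(t,1)\,\nu(dt) < \infty$ suffices.

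My strategy for $F_t$ is to regard it as a smooth function of the family $(\chi(A))_{A \neq \emptyset}$ and show that every partial derivative $\partial F_t/\partial\chi(A)$ is non-negative. A chain-rule calculation using $\partial\theta_\chi(I)/\partial\chi(A) = (-1)^{|A|+1}\mathbf{1}_{A \subset I}$ yields
\[ \frac{\partial F_t}{\partial\chi(A)} \,=\, \sum_{J \subset D \setminus A} (-1)^{|J|}\, g_t'\bigl(\theta(A \cup J)\bigr). \]
To make this manifestly non-negative, I would invoke the Choquet representation: since $\theta$ is union-completely alternating with $\theta(\emptyset) = 0$, there exist $\tau_K \geq 0$ such that $\theta(I) = \sum_{K :\, K \cap I \neq \emptyset} \tau_K$. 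Construct the random subset $Y_t \subset D$ by independently selecting each non-empty $K \subset D$ with probability $1 - e^{-t\tau_K}$ and taking the union of the selected sets. Then $g_t'(\theta(I)) = te^{-t\theta(I)} = t\,\PP(Y_t \cap I = \emptyset) = t\,\PP(Y_t \subset D \setminus I)$, and a standard M\"obius manipulation (substituting $L = B \setminus J$ with $B := D \setminus A$) collapses the alternating sum:
\[ \frac{\partial F_t}{\partial\chi(A)} \,=\, t \sum_{J \subset B} (-1)^{|J|}\, \PP(Y_t \subset B \setminus J) \,=\, t\,\PP(Y_t = B) \,\geq\, 0. \]
This identification is the technical heart of the argument.

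Finally, the convex-combination path $\chi_s := (1-s)\chi + s\widetilde\chi$, $s \in [0,1]$, stays inside the set of admissible parameters: the corresponding $\theta_s = (1-s)\theta + s\widetilde\theta$ remains union-completely alternating with $\theta_s(\emptyset) = 0$, since these properties are preserved under convex combinations. Because $\widetilde\chi(A) - \chi(A) \geq 0$ for every non-empty $A$ by hypothesis and each partial derivative is non-negative along the path, the fundamental theorem of calculus gives
\[ F_t(\widetilde\chi) - F_t(\chi) \,=\, \int_0^1 \sum_{A \subset D,\, A \neq \emptyset} \bigl(\widetilde\chi(A) - \chi(A)\bigr)\, \frac{\partial F_t}{\partial\chi(A)}\bigg|_{\chi_s}\, ds \,\geq\, 0, \]
and integrating against $\nu$ concludes the proof. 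I expect the combinatorial identity $\sum_{J \subset B} (-1)^{|J|}\,\PP(Y_t \subset B \setminus J) = \PP(Y_t = B)$ to be the main obstacle, since it is precisely what converts an alternating sum into a bona-fide probability and thereby unlocks the coordinate-wise monotonicity of $F_t$ in $\chi$.
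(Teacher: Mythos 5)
Your proof is correct, but it takes a genuinely different route from the paper's. The paper proves the proposition in two steps: first it treats the case where $\chi$ and $\widetilde\chi$ differ in a single coordinate, by augmenting $\theta$ to a union-completely alternating function on $\mathcal{P}_{d+1}$ and invoking the fact that Bernstein functions preserve complete alternation (its Corollaries~\ref{cor:theta-augmented}--\ref{cor:augmented-inequality}); it then chains single-coordinate increments together inside the cone $D_d$, which forces an interior-perturbation and continuity argument to cover boundary cases. You instead decompose $g$ via its L\'evy--Khintchine representation, reduce to $g_t(r)=1-e^{-tr}$, and prove coordinatewise monotonicity of $F_t$ in $\chi$ by an explicit gradient computation: the Choquet representation $\theta(I)=\sum_{K:K\cap I\neq\emptyset}\tau_K$ with $\tau_K\geq 0$ (available for any union-completely alternating $\theta$ with $\theta(\emptyset)=0$, and preserved along your convex path since $\theta_s=(1-s)\theta+s\widetilde\theta$ stays in the cone) lets you read off $e^{-t\theta(I)}=\PP(Y_t\cap I=\emptyset)$ for your independently-thinned random set $Y_t$, and the inclusion--exclusion identity $\sum_{J\subset B}(-1)^{|J|}\PP(Y_t\subset B\setminus J)=\PP(Y_t=B)$ is a correct and standard M\"obius computation. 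What your approach buys is an explicit probabilistic meaning for both the partial derivatives ($t\,\PP(Y_t=D\setminus A)\geq 0$) and the alternating sum itself ($F_t=\PP(Y_t=D)$), and it bypasses the paper's Step~2 interior/boundary continuity argument entirely, since your derivative formula is valid on the whole cone. What the paper's approach buys is that it never leaves the complete-alternation calculus and does not need the integral representation of $g$. One small point to tighten: your remark that integrability against $\nu$ follows because ``$F_t$ is a probability'' is not quite sufficient on its own, since $\nu$ may have infinite mass near $0$; you additionally need $F_t=O(t)$ as $t\to 0$, which does hold (e.g.\ from $1-e^{-tr}\leq tr$, or from $\PP(Y_t=D)\leq\sum_K(1-e^{-t\tau_K})\leq t\sum_K\tau_K$), so that $|F_t|\lesssim\min(1,t)$ and the interchange of sum and integral as well as the termwise comparison are justified.
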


\begin{remark}
Under the assumptions of Proposition~\ref{prop:key-OO-fundamentals} we have also
        \[
    \sum_{I \subset A} (-1)^{|I|+1} g\big(\theta(I)\big)
    \leq \sum_{I \subset A} (-1)^{|I|+1} g\big(\widetilde \theta(I)\big)
    \]
    for any non-empty subset $A$ of $\{1,\dots,d\}$.
This follows directly from the proposition as we may restrict $\theta$ and $\widetilde \theta$ to the respective subset $A$ and all assumptions that were previously made for $\{1,\dots,d\}$ will be valid for the restrictions to $A$, too. 
\end{remark}
\begin{proof} The inverse linear operation to recover $\theta$ from $\chi$ is given by
\[
\theta(A)=\sum_{I \subset A, \, I \neq \emptyset} (-1)^{|I|+1} \chi(I)
\]
(and likewise for $\widetilde \theta$ and $\widetilde \chi$), so that both quantities contain the same information. If $\chi=\widetilde \chi$ and hence $\theta=\widetilde \theta$, the statement is trivially true.
Otherwise, we will show the proposition in two steps. First, we will establish its validity in the situation when $\chi(A) < \widetilde \chi(A)$ only for one $A^* \subset \{1,\dots,d\}$, $A^* \neq \emptyset$ and $\chi(A) = \widetilde \chi(A)$ for all other $A \subset \{1,\dots,d\}$, $A \neq \emptyset$.  Second, we will show how this allows us to derive the proposition using convexity and continuity arguments.

\textit{Step 1:}
Let $\chi(A) < \widetilde \chi(A)$ only for one $A^* \subset \{1,\dots,d\}$, $A^* \neq \emptyset$ and $\chi(A) = \widetilde \chi(A)$ for all other $A \subset \{1,\dots,d\}$, $A \neq \emptyset$. Then $c=\widetilde \chi(A^*) - \chi(A^*)>0$ and 
\begin{align*}
    \widetilde \theta(A)=\begin{cases}
        \theta(A) + c & \text{if } A^* \subset A,\\
        \theta(A) & \text{else}
    \end{cases}
\end{align*}
if $|A^*|$ is odd, and
\begin{align*}
    \theta(A)=\begin{cases}
        \widetilde \theta(A) + c & \text{if } A^* \subset A,\\
        \widetilde \theta(A) & \text{else}
    \end{cases}
\end{align*}
if $|A^*|$ is even, and in both situations it suffices to show that
    \[
    \sum_{I \subset \{1,\dots,d\} : A^* \subset I} (-1)^{|I|+1} g\big(\theta(I)\big)
    \leq \sum_{I \subset \{1,\dots,d\}: A^* \subset I} (-1)^{|I|+1} g\big(\widetilde \theta(I)\big),
    \]
    which is equivalent to
    \[
    \sum_{J \subset \{1,\dots,d\}\setminus A^*} (-1)^{|J|+|A^*|+1} g\big(\theta(A^* \cup J)\big)
    \leq \sum_{J\subset \{1,\dots,d\}\setminus A^*} (-1)^{|J|+|A^*|+1} g\big(\widetilde \theta(A^* \cup J)\big).
    \]
    Hence, if $|A^*|$ is odd, we need to establish
        \[
    \sum_{J \subset \{1,\dots,d\}\setminus A^*} (-1)^{|J|} g\big(\theta(A^* \cup J)\big)
    \leq \sum_{J\subset \{1,\dots,d\}\setminus A^*} (-1)^{|J|} g\big(\theta(A^* \cup J)+c\big),
    \]
    and, if $|A^*|$ is even, 
    we need to establish
        \[
    \sum_{J \subset \{1,\dots,d\}\setminus A^*} (-1)^{|J|} g\big(\widetilde \theta(A^* \cup J)\big)
    \leq \sum_{J\subset \{1,\dots,d\}\setminus A^*} (-1)^{|J|} g\big(\widetilde \theta(A^* \cup J)+c\big).
    \]
    Both inequalities now follow directly from Corollary~\ref{cor:augmented-inequality}.

    \textit{Step 2:} Let $C_d$ be the set of points $x=(x_A)_{A \in \mathcal{P}_d \setminus \{\emptyset\}}$ in $\RR^{\mathcal{P}_d \setminus \{\emptyset\}}$ such that the mapping $A \mapsto x_A$ becomes union-completely alternating when setting $x_{\emptyset}=0$. Then $C_d$ is a convex cone  with non-empty interior and  
    $C_d \subset [0,\infty)^{\mathcal{P}_d \setminus \{\emptyset\}}$ with $(0,0,\dots,0) \in C_d$. Let $T:\RR^{\mathcal{P}_d \setminus \{\emptyset\}} \to \RR^{\mathcal{P}_d \setminus \{\emptyset\}}$ be the linear map, such that
    \[(Tx)_A = \sum_{I \subset A, \, I \neq \emptyset} (-1)^{|I|+1} x_I. \]
    Then $T \circ T$ is the identity mapping, hence $T$ is invertible. In particular $D_d=\{Tx \,:\, x \in C_d\}$ is also a convex cone with non-empty interior and $C_d=\{Tx \,:\, x \in D_d\}$.
    We also note that  $D_d \subset [0,\infty)^{\mathcal{P}_d \setminus \{\emptyset\}}$, cf.~\eqref{eq:chi-nonnegative} and that $(0,0,\dots,0) \in D_d$.
    Within the setting of the proposition, we have $\theta, \widetilde \theta \in C_d$ and $\chi,\widetilde \chi \in D_d$ with $\theta=T(\chi)$, $\widetilde \theta= T (\widetilde \chi)$ and $\chi=T(\theta)$, $\widetilde \chi= T (\widetilde \theta)$. 
    
    If both $\theta$ and $\widetilde \theta$ are points in the interior of $C_d$, then $\chi$ and $\widetilde \chi$ are in the interior of $D_d$. Therefore, there exists $\varepsilon>0$ such that the Minkowski sum of the line segment between $\chi$ and $\widetilde \chi$ and an (e.g. Euclidean) $\varepsilon$-ball centered at $(0,0,\dots,0) \in \RR^{\mathcal{P}_d \setminus \{\emptyset\}}$ is completely contained in $D_d$. Within this set we can find a chain $\chi=\chi^{(0)}\leq \chi^{(1)} \leq \chi^{(2)} \leq \dots \leq \chi^{(n)}=\widetilde \chi$, such that for each $i=0,\dots,n-1$ we have that $\chi^{(i)}$ and  $\chi^{(i+1)}$ differ only in one component. By construction, we also have that
    $\theta^{(i)}=T(\chi^{(i)})\in C_d$ and $\theta^{(i+1)}=T(\chi^{(i+1)})\in C_d$, so that we are in the situation of \textit{Step 1} and we may conclude that 
        \[
    \sum_{I \subset \{1,\dots,d\}} (-1)^{|I|+1} g\big(\theta^{(i)}(I)\big)
    \leq \sum_{I \subset \{1,\dots,d\}} (-1)^{|I|+1} g\big(\theta^{(i+1)}(I)\big)
    \]
    for all $i=0,\dots,n-1$, hence the assertion (which does not depend on the choice of $\varepsilon$ or the choice of the chain).
    In other words, we have established the assertion of the proposition if both $\theta$ and $\widetilde \theta$ are points in the interior of $C_d$.

    To complete the argument,  note that 
    the mapping $f:C_d \to \RR$ with
    \[   f(x) = g(0) + \sum_{I \subset \{1,\dots,d\}, I \neq \emptyset} (-1)^{|I|+1} g\big(x_I\big)\]
    is continuous.
    Let $v \in C_d$ be a vector in the interior of $C_d$. Then, for any $\delta>0$ both $\theta+\delta v$ and $\widetilde \theta + \delta v$ are in the interior of $C_d$, whereas $\chi + \delta T(v) = T(\theta + \delta v)$ and $\widetilde \chi + \delta T(v)=T(\widetilde \theta + \delta v)$ are in the interior of $D_d$ and still satisfy $\chi + \delta T(v) \leq \widetilde \chi + \delta T(v)$. Therefore, $f(\theta+\delta v)\leq f(\widetilde \theta + \delta v)$. Finally, since $f$ is continuous, we can find for given $\varepsilon>0$ a corresponding $\delta>0$, such that $f(\theta+\delta v)$ is $\varepsilon$-close to $f(\theta)$, while $f(\widetilde \theta+\delta v)$ is $\varepsilon$-close to $f(\widetilde \theta)$. The assertion of the proposition $f(\theta)\leq f(\widetilde \theta)$ follows as we may choose $\varepsilon$ arbitrarily close to zero.
\end{proof}

\section{Calculation of the max-zonoid envelope} \label{app:envelope}

Let $K$ be the max-zonoid (or dependency set) associated with a stable tail dependence function $\ell$ of a simple max-stable random vector, that is, 
\[K = \big\{{\bm k} \in [0,\infty)^d  \,:\, \langle {\bm k}, {\bm u} \rangle \leq \ell({\bm u}) \quad \text{for all } {\bm u} \in \Sph^{(d-1)}_+ \big\}, \]
and, conversely,
\[\ell({\bm x}) = \sup\{ \langle {\bm x}, {\bm k}\rangle \,:\, {\bm k} \in K\}, \quad {\bm x} \in [0,\infty)^d, \]
cf. \citet{molchanov_08}.
Here, $\Sph^{(d-1)}_+=\{{\bm u} \in [0,\infty)^d \;:\; \lVert {\bm u} \rVert_2 = 1 \}$ denotes the $(d-1)$-dimensional Euclidean unit sphere in $\RR^d$ restricted to the upper orthant $[0,\infty)^d$.
It is well-known that
\[ \Delta^d \subset K \subset [0,1]^d, \]
where the cross-polytope $\Delta^d = \{{\bm x} \in [0,\infty)^d \,:\, \langle{\bm x},{\bm 1}\rangle \leq 1\}$ corresponds to perfect dependence, whereas the cube $[0,1]^d$ corresponds to independence. In particular, in the direction along the $i$-th axis the set $K$ contains precisely the set $\{t {\bm e_i} \,:\, t \in [0,1]\}$.

For illustrative purposes we restrict our attention to $d=2$, where we seek to calculate a parametrisation of the boundary curve of a general dependency set $K$. 
To this end, we parametrise the upper unit circle via ${\bm u}=(\cos(\alpha),\sin(\alpha))^T \in \Sph^1_+$ for $\alpha \in [0,\pi/2]$ and we assume that  $\ell$ is differentiable.
For $\alpha \in (0,\pi/2)$ a point $(x_1,x_2)$ on the desired envelope curve will then satisfy the two conditions 
\begin{align*}
    \Big \langle \begin{pmatrix} \cos(\alpha) \\ \sin(\alpha) \end{pmatrix}, \begin{pmatrix} x_1 \\ x_2 \end{pmatrix} \Big \rangle - \ell  \begin{pmatrix} \cos(\alpha) \\ \sin(\alpha) \end{pmatrix}  &= 0,\\
      \frac \partial {\partial \alpha}  \bigg[ \Big \langle \begin{pmatrix} \cos(\alpha) \\ \sin(\alpha) \end{pmatrix}, \begin{pmatrix} x_1 \\ x_2 \end{pmatrix} \Big \rangle - \ell  \begin{pmatrix} \cos(\alpha) \\ \sin(\alpha) \end{pmatrix}\bigg] &= 0,
\end{align*}
which can be seen by a standard calculus of variations argument \citep{EncyMath}.
Let $\partial_1 \ell$ and $\partial_2 \ell$ denote the partial derivatives of $\ell$ with respect to first and second component. The two conditions can be then be expressed as
\begin{align*}
   x_1 \cos(\alpha) + x_2 \sin(\alpha) &= \ell(\cos(\alpha), \sin(\alpha)) \\
    -x_1 \sin(\alpha) + x_2 \cos(\alpha) &= -\sin(\alpha) \partial_1 \ell (\cos(\alpha), \sin(\alpha)) + \cos(\alpha) \partial_2 \ell (\cos(\alpha),\sin(\alpha)) .
\end{align*}
Solving the system for $x_1$ and $x_2$ (while taking into account $\sin^2(\alpha)+\cos^2(\alpha)=1$) gives
\begin{align}
\label{eq:x1}
 x_1 &= \cos(\alpha)  L(\alpha) +  \sin^2(\alpha)  L_1(\alpha) - \sin(\alpha)\cos(\alpha) L_2(\alpha),\\
 \label{eq:x2}
    x_2 &= \sin(\alpha)  L(\alpha) -  \sin(\alpha) \cos(\alpha) L_1(\alpha) + \cos^2(\alpha) L_2(\alpha),
\end{align}
where 
\begin{align*}
    L(\alpha)=\ell(\cos(\alpha), \sin(\alpha))
    \qquad \text{ and } \qquad
    L_i(\alpha)= \partial_i \ell(\cos(\alpha),\sin(\alpha)), \quad i=1,2.
\end{align*}
The parametrisation of the boundary curve of $K$ as given by \eqref{eq:x1} and \eqref{eq:x2} is the basis for all our plots in this text.

\begin{example}[H\"usler-Rei{\ss} distribution]
For the bivariate H\"usler-Rei{\ss} family with stable tail dependence function 
\begin{align}\label{eq:ellHR}
 \ell(x_1,x_2)=
     x_1 \Phi\bigg(\frac{\eta}{2} +\frac{\log(x_1/x_2)}{\eta}\bigg)+x_2 \Phi\bigg(\frac{\eta}{2}+\frac{\log(x_2/x_1)}{\eta}\bigg),
 \end{align}
 where $\eta^2=\gamma_{12}$,
straightforward calculations show that
\begin{align*}
    L_1(\alpha)= \widetilde L(\cot(\alpha)) 
    \qquad \text{and} \qquad
    L_2(\alpha)= \widetilde L(\tan(\alpha)),
\end{align*}
with
\begin{align*}
     \widetilde L(t)&=
     \Phi\left(\frac{\eta}{2} +\frac{\log\left(t \right)}{\eta} \right)
+ \frac{1}{\eta}  \varphi\left(\frac{\eta}{2} +\frac{\log\left(t \right)}{\eta} \right) 
-  \frac{1}{\eta t} \varphi\left(\frac{\eta}{2} -\frac{\log\left(t\right)}{\eta} \right).
\end{align*}
\end{example}

In other situations the spectral density $h$ of $\ell$ may be known, such that 
\[\ell(x_1,x_2) = \int_{0}^1 \max( \omega x_1, (1-\omega) x_2) h(\omega) d\omega.
\]

\begin{example}[Dirichlet model]
The spectral density of the bivariate Dirichlet model with parameter vector $(\alpha_1,\alpha_2) \in (0,\infty)^2$ is given by 
\begin{align*}
    h(\omega)= \frac{\Gamma(\alpha_1 + \alpha_2 + 1)}{(\alpha_1 \omega + \alpha_2 (1-\omega))^{(\alpha_1 + \alpha_2 + 1)}} \frac{\alpha_1^{\alpha_1}}{\Gamma(\alpha_1)} \frac{\alpha_2^{\alpha_2}}{\Gamma(\alpha_2)}
    \omega^{\alpha_1-1} (1-\omega)^{\alpha_2-1}.
\end{align*}
\end{example}

Let us abbreviate 
\[H(t)=\int_0^t h(\omega) d\omega \qquad \text{and} \qquad \widetilde H(t) = \int_0^t \omega h(\omega) d\omega.\]
Taking into account the identities $H(1) =2$ and $\widetilde H(1) =1$ (due to marginal standardisation) straightforward calculations yield
\begin{align*}
    \ell(x_1,x_2) &= x_1 - (x_1+x_2) \widetilde H\bigg(\frac{x_2}{x_1+x_2}\bigg) + x_2 H\bigg( \frac{x_2}{x_1+x_2} \bigg),\\
    \partial_1 \ell(x_1,x_2) &= 1 - \widetilde H\bigg(\frac{x_2}{x_1+x_2}\bigg),\\
    \partial_2 \ell(x_1,x_2) &= H\bigg(\frac{x_2}{x_1+x_2}\bigg) 
    - \widetilde H\bigg(\frac{x_2}{x_1+x_2}\bigg).
\end{align*}
Hence,
\begin{align*}
    L(\alpha) &= \cos(\alpha) - (\sin(\alpha)+\cos(\alpha)) \widetilde H\bigg(\frac{1}{1+\cot(\alpha)}\bigg) + \sin(\alpha) H\bigg(\frac{1}{1+\cot(\alpha)}\bigg),\\
    L_1(\alpha) &= 1 - \widetilde H\bigg(\frac{1}{1+\cot(\alpha)}\bigg),\\
    L_2(\alpha) &= H\bigg(\frac{1}{1+\cot(\alpha)}\bigg) 
    - \widetilde H\bigg(\frac{1}{1+\cot(\alpha)}\bigg).
\end{align*}

\end{document}